\documentclass[11pt, reqno]{amsart}
\usepackage{fullpage}

 \newif\ifHideFoot
\HideFoottrue

\ifHideFoot
\else
\usepackage[notref,notcite]{showkeys}
\fi 

\usepackage{amssymb,amsmath,amsthm,amscd,mathrsfs,graphicx, color}
\usepackage[cmtip,all,matrix,arrow,tips,curve]{xy}
 \usepackage{hyperref}
 \usepackage{comment}
\usepackage[usenames,dvipsnames]{xcolor}
\usepackage{xypic}
\hypersetup{colorlinks=true,citecolor=ForestGreen,linkcolor=Maroon,urlcolor=NavyBlue}
 \usepackage{mathtools}
\usepackage{mathpazo}
\usepackage[normalem]{ulem}
\usepackage{thmtools}
\usepackage{cleveref}
\usepackage{enumitem}
\usepackage{tikz-cd}
\usepackage{pgfplots}
  
\numberwithin{equation}{section}
\newtheorem{teo}{Theorem}[section]
\newtheorem{pro}[teo]{Proposition}
\newtheorem{lem}[teo]{Lemma}
\newtheorem{cor}[teo]{Corollary}

\newtheorem{teoalpha}{Theorem}

\newtheorem{coralpha}[teoalpha]{Corollary}

\theoremstyle{definition}

\theoremstyle{remark}
\newtheorem{rem}[teo]{Remark}

\ifHideFoot

\newcommand{\Yano}[1]{}
\newcommand{\Shend}[1]{}

\else 

\newcommand{\marg}[1]{\normalsize{{
\color{red}\footnote{{\color{blue}#1}}}{\marginpar[\vskip
-.25cm{\color{red}\hfill\tiny\thefootnote$\implies$}]{\vskip
-.2cm{\color{red}$\impliedby$\tiny\thefootnote}}}}}
\newcommand{\Yano}[1]{\marg{(Yano) #1}}
\newcommand{\Shend}[1]{\marg{(Shend) #1}}

\fi

\newcommand{\m}[1]{\mathcal{#1}}

\newcommand{\X}{\mathcal{X}}
\newcommand{\XX}{\mathcal{X}}

\newcommand{\Y}{\mathcal{Y}}

\newcommand{\Q}{\mathbb{Q}}

\newcommand{\ra}{\rightarrow}

\title[Moduli of varieties of general type]{Moduli spaces of varieties of general type are naturally of log general type}

\author{Sebastian Casalaina-Martin}
\address{University of Colorado, Department of Mathematics, 
Boulder, CO 80309, USA }
\email{casa@math.colorado.edu}

\author{Shend Zhjeqi}
\address{University of Michigan, Department of Mathematics, 
Ann Arbor, MI 48109, USA }
\email{shendzh@umich.edu}

\thanks{Research of the first named author is supported in part by a grant from the Simons Foundation (SFI-MPS-TSM-00013682). The second named author was partially supported by the Simons Collaboration grant Moduli of Varieties.}

\date{\today}

\begin{document}

\begin{abstract}
We generalize work of Popa--Schnell on Viehweg hyperbolicity for smooth projective varieties to the case of smooth Deligne--Mumford stacks.  The results of Popa--Schnell build on results of Viehweg--Zuo, utilize a result of Campana--P\u{a}un, and extend results of Kebekus--Kov\'acs and Patakfalvi.  We show that if a smooth proper DM stack with projective coarse moduli space parameterizes a family of varieties of general type having maximal variation, then the natural log canonical bundle of the stack is big.  We also consider implications for the coarse moduli space of the stack, and apply the results to several standard moduli spaces.
\end{abstract}

\maketitle

\ifHideFoot
\setcounter{page}{1}
\fi

\section*{Introduction}

Recent work of a number of authors \cite{VZ03, KK08, Pat12, CPFol19, PS17, WW23} has established Viehweg hyperbolicity for families of smooth projective varieties of general type.  
These results build on strategies developed in \cite{Zuo00negativity, VZ01isotriv, VZ02, VZ03},   as well as positivity  results for variations of Hodge structures  due to Griffiths and  \cite{fujita_fiber_spaces, Kawamata, FF14variationsHMS, FFS14Remarks, Zuo00negativity,PW16, PTW18}, and  for push forwards of relative canonical bundles due to \cite{kollar_sub, EV90effective, KP17proj}.
Specifically, in the paper of 
Popa--Schnell \cite{PS17}, they establish that if $f:Y\to X$ is an algebraic fiber space of maximal variation between smooth projective varieties, $\Delta\subseteq X$ is any divisor containing the discriminant, and the geometric generic fiber is of general type (or simply admits a good minimal model), then $K_X+\Delta$ is big. 
 
While one can apply this result to coarse moduli spaces of smooth moduli stacks by ignoring loci with extra automorphisms,  our goal in this paper is to generalize the result to smooth proper DM stacks with projective coarse moduli space, so that we can apply the results directly to moduli stacks.  This seems more natural in the context of moduli spaces, and gives sharper statements when applied to the coarse moduli space of the stack.  We discuss this comparison further below.
This paper is the fifth in a series with the aim of generalizing results of \cite{PS17, WW23}, on Viehweg hyperbolicity, to
the case of Deligne–Mumford stacks; the previous articles in this series are 
\cite{CMZpositivity, CMZslope_stability, CMZfoliations, CMZlog_general_type_KSBA}. 

Our main result is 
\Cref{T:main-goodMM}, below, which generalizes \cite[Thm.~A]{PS17} to the case of DM stacks:

\begin{teoalpha}[Viehweg hyperbolicity for DM stacks] 
\label{T:main-goodMM} 
Let $f:\mathcal Y\to \mathcal X$ be a surjective schematic projective morphism of smooth proper integral DM stacks over $\mathbb C$ with projective coarse moduli spaces, and assume  $f$ has connected fibers.  Let $\mathbf \Delta\subseteq \mathcal X$ be any reduced divisor containing the discriminant locus   of  $f$.
Assume the following:
\begin{enumerate}

\item  The geometric generic fiber of $f$ is of general type, or, more generally,  the geometric generic fiber of $f$ admits a good minimal model (\cite[p.4]{kawamata_kod_dim}), 

\item $f$ has maximal variation, in the sense that $\operatorname{Var}(f)=\dim \mathcal X$.   
\end{enumerate}
 If $\mathcal X$ has generically trivial stabilizers, then  $K_{\mathcal X}+\mathbf \Delta$ is big.
\end{teoalpha}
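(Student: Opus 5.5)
Our plan is to reduce to the Popa--Schnell argument, run on a finite cover by a scheme and then descended, keeping track of ramification. Since $\mathcal X$ is a smooth proper DM stack with projective coarse space $X$ and generically trivial stabilizers, Kresch--Vistoli provides a finite flat surjective morphism $\pi: X'\to \mathcal X$ from a smooth projective variety $X'$. Alternatively, we can take a smooth projective variety $X'$ with a generically finite, generically étale map to $\mathcal X$: first take a finite cover by a scheme, then resolve. Pull back $f$ to get $f':Y'\to X'$; after resolving $Y'$, this is an algebraic fiber space between smooth projective varieties. Its generic fiber is the same as that of $f$, so it is of general type or has a good minimal model, and variation is preserved under generically finite base change, so $\operatorname{Var}(f')=\dim X'$. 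Set $\Delta'=\pi^{-1}(\mathbf\Delta)_{\mathrm{red}}$ together with the exceptional and branch loci of $\pi$ that are needed. Then $\Delta'$ contains the discriminant of $f'$.

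We will not apply \cite[Thm.~A]{PS17} to $X'$ as a black box, since bigness of $K_{X'}+\Delta'$ does not descend. Instead we use its intermediate output, the Viehweg--Zuo type sheaf. By \cite{VZ03, PS17}, there are an ample $A'$ on $X'$, an integer $m\ge1$, and an injection $A'\hookrightarrow (\Omega^1_{X'}(\log \Delta'))^{\otimes m}$. The key step is to produce such a sheaf directly on $\mathcal X$: an injection
\[
\mathcal A\hookrightarrow \bigl(\Omega^1_{\mathcal X}(\log \mathbf\Delta)\bigr)^{\otimes m}
\]
with $\mathcal A$ big on $\mathcal X$. For this we redo the Popa--Schnell construction equivariantly. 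Over an étale atlas, or on the Galois closure of $X'\to\mathcal X$ with its group action, the objects involved are all canonical: the Hodge module or VHS attached to $f_*\omega^{\otimes k}_{Y/X}$, the graded Higgs bundle, and the section coming from $\det f_*\omega^{k}$ positivity via Kollár/Viehweg. They therefore descend to $\mathcal X$. Bigness of $\mathcal A$ can be tested after pullback to $X'$, where it follows from the positivity of $\det f'_*\omega^{k}_{Y'/X'}$ given by maximal variation.

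Finally we apply the Campana--P\u{a}un criterion for DM stacks. If $\Omega^1_{\mathcal X}(\log\mathbf\Delta)^{\otimes m}$ contains a big line bundle, then $K_{\mathcal X}+\mathbf\Delta$ is big. We intend to invoke the stacky version established in the earlier articles \cite{CMZfoliations, CMZslope_stability}. Otherwise we would argue as follows: pull back to $X'$, where $\pi^*\Omega^1_{\mathcal X}(\log\mathbf\Delta)\subseteq \Omega^1_{X'}(\log\Delta')$ since $\pi$ is étale over the generic point and log differentials pull back to log differentials. We then use pseudo-effectivity of the foliations/slope argument together with the fact that bigness of a $\mathbb Q$-line bundle on $\mathcal X$ is equivalent to bigness of its pullback. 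The hypothesis of generically trivial stabilizers enters here. It makes $K_{\mathcal X}$ agree with $K_X$ away from codimension-one ramification, and it makes the Higgs sheaf construction generically the one on the coarse space.

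The main obstacle is the descent step: showing that the Viehweg--Zuo subsheaf, and in particular the saturation and kernel arguments in \cite{PS17}, are compatible with the group action and stabilizers. This matters most along divisors with nontrivial generic stabilizers, where $\pi$ ramifies and $\pi^*K_{\mathcal X}\neq K_{X'}$. Our first approach is to work entirely on étale charts of $\mathcal X$, where the construction is local and canonical. This avoids the ramified cover and makes the logarithmic poles appear only along $\mathbf\Delta$.
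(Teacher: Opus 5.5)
There is a genuine gap, and it is exactly at the step you call the key one: the claim that the Popa--Schnell objects are ``canonical'' and therefore descend from $X'$ (or from an \'etale atlas) to $\mathcal X$. The Hodge module of $f$ itself is canonical and does descend, but by itself it does not give a big line bundle inside the Higgs sheaf. The positivity comes from non-canonical choices. You must choose a line bundle $A$ and a fibered power $Y^{(r)}$. Above all, you must choose a section of $f^{(r)}_*(\omega^{\otimes m}_{Y^{(r)}/X}\otimes f^{(r)*}A^{\otimes -m})$ to build the cyclic cover whose Hodge module yields the big piece $\mathcal F_0$.

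Sections produced on $X'$ by Koll\'ar--Viehweg positivity are not $G$-invariant on a Galois cover, nor compatible over $U\times_{\mathcal X}U$ for an atlas $U$. On a non-proper \'etale atlas, the global positivity and global generation statements do not even make sense. A Galois closure $V\to\mathcal X$ with group $G$ does not help either. Equivariant data on $V$ descends only to $[V/G]$, and $[V/G]\to\mathcal X$ is an isomorphism only where $V\to\mathcal X$ is \'etale. So you recover $K_{\mathcal X}+\mathbf\Delta$ plus a branch divisor of the cover, which is the weaker statement you correctly noted does not descend. Your fallback for the last step has the inclusion pointing the wrong way. The inclusion $\pi^*\Omega^1_{\mathcal X}(\log\mathbf\Delta)\subseteq\Omega^1_{X'}(\log\Delta')$ does not move a big subsheaf of the right-hand side into the left-hand side.

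What is missing is a construction of the positivity input directly on the stack, with the cover used only as an auxiliary tool. This is \Cref{P:PTW18-PA.1}. After a blow-up $\widetilde{\mathcal X}\to\mathcal X$, one finds a line bundle $\tilde{\mathcal A}$ on the stack with $\tilde{\mathcal A}(-\tilde{\mathbf\Delta})$ big. With this $\tilde{\mathcal A}$, the sheaf $\tilde f^{(r)}_*\bigl(\omega^{\otimes m}_{\widetilde{\mathcal Y}^{(r)}/\widetilde{\mathcal X}}\otimes\tilde f^{(r)*}\tilde{\mathcal A}^{\otimes -m}\bigr)$ is globally generated over the trivial-stabilizer part of $\widetilde{\mathcal X}-\tilde{\mathbf\Delta}$. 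Getting there requires three ingredients:
\begin{enumerate}
\item mild reduction for stacks;
\item duality on DM stacks, to produce the inclusion $g_*\omega^{\otimes k}_{Z/X'}\hookrightarrow\tau^*\tilde f_*\omega^{\otimes k}_{\widetilde{\mathcal Y}/\widetilde{\mathcal X}}$;
\item the one genuine descent statement, \Cref{L:VZ03Cor2.4}(iv): $\det(g_*\omega^{\otimes k}_{Z/X'})^{\otimes N_k}$ is $\tau^*$ of a line bundle on $\widetilde{\mathcal X}$. This is proved by comparing multiplicities of the discrepancy divisor $B_k$ on test curves, using semistable reduction.
\end{enumerate}

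Global generation obtained on $X'$ from \cite{PTW18} is then pushed down via $\tau_*$ and the trace $\tau_*\mathcal O_{X'}\to\mathcal O_{\widetilde{\mathcal X}}$. This needs $\tilde{\mathcal L}\otimes\tau_*\mathcal O_{X'}$ to be generically globally generated on the stack, where $\tilde{\mathcal L}$ is pulled back from an ample line bundle on the coarse space. That is the only place the generically-trivial-stabilizer hypothesis enters. It is not used, as you suggest, to compare $K_{\mathcal X}$ with $K_X$ or the Higgs sheaves with those on the coarse space. With \Cref{P:PTW18-PA.4} added, the stacky Hodge-module results \cite[Thm.~4.13, Thm.~5.1]{CMZlog_general_type_KSBA} apply. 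Then \cite[Thm.~B]{CMZpositivity} and \cite[Cor.~B]{CMZfoliations} produce the Viehweg--Zuo sheaf on $\mathcal X$ itself and finish the proof.
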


We refer the reader to \cite[Def.~2.4]{CMZpositivity} for the definition of a big line bundle on $\mathcal X$; in short, this means that a positive tensor power of the line bundle descends to a big line bundle on the coarse moduli space.  The variation of $f$ is the variation of the family after pull back by any \'etale cover of $\mathcal X$ (see \S \ref{S:variation}).  
The case  of \Cref{T:main-goodMM} 
where $\mathcal X=[V/G]$ is the quotient of a smooth projective variety $V$ by a finite group $G$ (even without the assumption on generically trivial stabilizers) follows directly from \cite[Thm.~A]{PS17}; the content of our result is to extend the result to more general smooth Deligne--Mumford stacks.
The assumption in \Cref{T:main-goodMM} that $\mathcal X$ have generically trivial stabilizers is used only in a construction in the proof of \Cref{P:PTW18-PA.1}, where certain global generation properties are needed.  We  expect this hypothesis can be removed using a different approach.     The proof of \Cref{T:main-goodMM} is given in \S \ref{S:ProofMain}.

\medskip

Our proof of \Cref{T:main-goodMM}   closely follows the strategy developed in \cite{PS17},
which in turn builds on the strategies developed by  Viehweg and Zuo in  \cite{Zuo00negativity, VZ01isotriv, VZ02, VZ03}. 
   The first observation is that one can replace $\mathcal X$  by a log resolution (see, e.g.,  \cite[Lem.~1.9]{CMZlog_general_type_KSBA}), so we may as well assume that the divisor $\mathbf \Delta$ is a normal crossings divisor. 
The main goal is then to establish the existence of a so-called Viehweg--Zuo sheaf, i.e., a sheaf $\mathcal H$ with big determinant that sits in a short exact sequence 
$$
\xymatrix{
0 \ar[r]& \mathcal H \ar[r]& (\Omega^1_{\mathcal X}(\log \mathbf \Delta))^{\otimes s} \ar[r] & \mathcal Q \ar[r]& 0,
}
$$
where  $s$ is a positive integer, and for simplicity, one may always replace $\mathcal H$ with its saturation.  It follows in the  case of varieties from 
a theorem of Campana--P\u{a}un  \cite[Thm.~7.6, Thm.~1.2]{CPFol19},   and from   
\cite[Thm.~A]{CMZfoliations} in the case of stacks, that   $\det \mathcal Q$ is pseudo-effective.  
Taking determinants in the short exact sequence above, one obtains
that a positive multiple of $K_{\mathcal X}+\mathbf \Delta$ is ``big plus pseudo-effective'', and therefore is big  (e.g., \cite[Cor.~B]{CMZfoliations}).

\medskip 

   The proof of \Cref{T:main-goodMM}  is therefore reduced to constructing Viehweg--Zuo sheaves.  Our approach is to follow the strategy of \cite{PS17}, which builds on techniques developed in \cite{VZ01isotriv, VZ02, VZ03}.  
   The basic idea is to try to show there exists a big sub-line bundle contained in the  lowest piece of the Hodge filtration for the middle variation of Hodge structure associated to $f$, or more precisely,   the lowest piece of the Hodge filtration for the  Hodge module $\mathsf M$ obtained as the torsion-free quotient of $\mathcal H^0f_+\mathbb Q_{\mathcal Y}[\dim \mathcal Y]$.  
To make this work, one in fact needs to make the variation of Hodge structure more positive, by considering fibered products and cyclic covers.  Much of the necessary work for extending these techniques to stacks was accomplished in \cite{CMZlog_general_type_KSBA}, however, there are some key points that are different in the case considered in this paper.   The main new additions here are extending work of \cite{VZ03} on mild reduction to the case of DM stacks, as well as work of \cite{Vkod1} regarding some applications of relative duality (see \S \ref{S:mild_redux}).
The culmination of all of this is provided by the results \Cref{P:PTW18-PA.1} and \Cref{P:PTW18-PA.4}, which allow for the necessary fibered product and cyclic covering constructions.  

 Once one has \Cref{P:PTW18-PA.1} and \Cref{P:PTW18-PA.4},  then results of \cite{PS17}, generalized to the case of stacks in \cite[Thm.~4.3 and Thm.~5.1]{CMZlog_general_type_KSBA},  allow one to 
  replace the original family with the family obtained by the fibered product and cyclic covering construction, and then  
use the theory of Hodge modules to  construct an effective snc divisor $\mathbf E\subseteq \mathcal X$, containing $\mathbf \Delta$, together with a graded logarithmic Higgs bundle on $\mathcal X$
$$
\theta_\bullet: \mathcal E_\bullet \longrightarrow \mathcal E_\bullet \otimes \Omega^1_{\mathcal X}(\log \mathbf E)
$$ 
containing a graded sub-sheaf $\mathcal F_\bullet\subseteq \mathcal E_\bullet$ such that $\theta_p(\mathcal F_p)\subseteq \mathcal F_{p+1}\otimes\Omega^1_{\mathcal X}(\log \mathbf \Delta)$, and such  that the first nonzero sheaf in the filtration $\mathcal F_\bullet$ is  a big line bundle.
This step is one  of several  key additions that Popa--Schnell \cite{PS17} made to the strategy for constructing Viehweg--Zuo sheaves, by using  Saito's theory of Hodge modules, and the compatibility of Hodge and
$V$-filtrations, to show  the existence of the  Higgs sheaf $\mathcal F_\bullet$  that is 
logarithmic with respect to the discriminant locus $\mathbf \Delta$, as opposed to the
singular locus of the torsion-free quotient of the Hodge module $\mathcal H^0f_+\mathbb Q_{\mathcal Y}[\dim \mathcal Y]$, where the latter could be bigger.

At this point, the proof relies on some of our previous results in \cite{CMZpositivity, CMZfoliations}, generalizing some results of \cite{PW16,PS17} to the case of DM stacks.  
 More precisely, the existence of the  
 graded logarithmic Higgs bundle $\mathcal E_\bullet$ 
  and the graded subsheaf $\mathcal F_\bullet$  
   implies from \cite[Thm.~B]{CMZpositivity} 
   the existence of a Viehweg--Zuo sheaf, completing the construction.

\medskip 
The above results also have implications for the coarse moduli spaces of the stacks. 
We let  $\pi:\mathcal X\to X$ denote  the canonical morphism to the coarse moduli space, with ramification divisor $\mathbf R$ on $\mathcal X$, and branch $\mathbb Q$-divisor $R$ on $X$ ($\mathbf R=\pi^*R$ and $K_{\mathcal X} = \pi^*K_X+\mathbf R=\pi^*(K_X+R)$).
Given a divisor $\mathbf \Delta$ on $\mathcal X$, we let $\Delta$ be the $\mathbb Q$-divisor on $X$ such that $\mathbf \Delta=\pi^*\Delta$.  
All together, we have
$
K_{\mathcal X}+\mathbf \Delta = \pi^*(K_X+R+\Delta)
$. 
Consequently,  $K_{\mathcal X}+\mathbf \Delta $ is big if and only if $K_X+R+\Delta$ is big.  

In the situation we are in, i.e., working with a  normal integral complex projective  $\mathbb Q$-factorial variety $X$ with an effective $\mathbb Q$-divisor $\Delta$ on $X$ with coefficients in $(0,1]$, we will say that the pair $(X,\Delta)$ is of log general type if for any log resolution $\mu:X'\to X$ of the pair $(X,\Delta)$, taking $\Delta'$ to be the union of the strict  transform of $\Delta$ with the \emph{reduced exceptional locus of $\mu$}, one has that $K_{X'}+\Delta'$ is big.  If $(X,\Delta)$ is log canonical and $K_X+\Delta$ is big, then it follows that $(X,\Delta)$ is of log general type (see also \cite[Prop.~1.27]{kollar_singularities_MMP}).
With this terminology, we have the following:

\begin{coralpha}\label{main-coarseMS-cor}
In the situation of \Cref{T:main-goodMM}, one has that $(X,R+\Delta)$ is of log general type.   
\end{coralpha}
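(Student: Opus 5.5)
Given a log resolution $\mu:X'\to X$ of $(X,R+\Delta)$, I would lift it to the stack and apply \Cref{T:main-goodMM} there. Let $\Delta'$ be the strict transform of $\operatorname{Supp}(R+\Delta)$ together with the reduced exceptional locus of $\mu$. I need to show that $K_{X'}+\Delta'$ is big.

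\textbf{Step 1: a stacky model over $X'$.} Form the normalization $\mathcal X'$ of the main component of $\mathcal X\times_X X'$. It carries a proper birational morphism $\mathcal X'\to\mathcal X$ and has coarse space $X'$. Then take a resolution $\mathcal X''\to\mathcal X'$, for instance by functorial resolution, which is compatible with étale maps and so works for DM stacks. Choose it so that the preimage $\mathbf D$ of $\Delta'$, together with the exceptional locus, is an snc divisor. Let $X''$ be the coarse space of $\mathcal X''$; it maps birationally onto $X'$. Since bigness of $K+$boundary for log smooth pairs is a birational invariant, it is enough to work on a model dominating $X'$.

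\textbf{Step 2: apply the theorem.} Pull back $f$ to $\mathcal X''$. To keep the total space smooth, resolve the fibered product; this is schematic and projective over $\mathcal X''$. The new family still has maximal variation, a geometric generic fiber of general type (or admitting a good minimal model), and generically trivial stabilizers. Its discriminant lies inside the preimage of $\mathbf\Delta$ together with the exceptional divisors, and hence inside $\mathbf D$. \Cref{T:main-goodMM} then shows that $K_{\mathcal X''}+\mathbf D$ is big.

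\textbf{Step 3: descend to the coarse space.} Let $\pi'':\mathcal X''\to X''$ be the coarse map. Because $\mathbf D$ contains the whole ramification divisor of $\pi''$, the logarithmic Hurwitz formula gives
$$K_{\mathcal X''}+\mathbf D=\pi''^*(K_{X''}+D_{\mathrm{red}}),$$
where $D$ is the image of $\mathbf D$. Indeed, the coefficient of each ramified component is $1-1/e+1/e=1$. Therefore $K_{X''}+D_{\mathrm{red}}$ is big, with $D_{\mathrm{red}}$ the reduced total transform of $\Delta'$ plus the exceptional divisors of $X''\to X'$.

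\textbf{Step 4: push back to $X'$.} I expect this to be the main obstacle. The difficulty is that $X''$ need not be smooth, only with quotient singularities. To handle it, take a log resolution $X'''$ of $(X'',D)$ and compare:
\begin{itemize}
\item $K_{X'''}+(\text{reduced total boundary})$ agrees with the pullback of $K_{X''}+D_{\mathrm{red}}$ up to adding an effective divisor, because an snc pair with reduced boundary on a quotient-singular model is log canonical.
\item Hence $K_{X'''}+\text{boundary}$ is big on the smooth model $X'''$.
\item For smooth log pairs $(X',\Delta')$ with snc reduced boundary, bigness of $K+$boundary is independent of the log resolution, since log Kodaira dimension is a birational invariant of the open part. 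So $K_{X'}+\Delta'$ is big, which is what we need.
\end{itemize}
The key check is that the complement $X'\setminus\Delta'$ contains $X'''\setminus(\text{boundary})$ birationally and properly as an open comparison. This requires the boundary on $X'''$ to map into $\Delta'$, which holds because the new components are either exceptional over $X'$ or lie over $\Delta'$.
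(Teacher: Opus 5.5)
Your argument proves a weaker statement than the corollary, because it discards the fractional coefficients that are its whole content. In the definition of log general type used here, $\Delta'$ is the strict transform of the $\mathbb Q$-divisor $R+\Delta$ \emph{with its coefficients}, plus the \emph{reduced} exceptional locus. If it were otherwise, $(X,R+\Delta)$ and $(X,\lceil R+\Delta\rceil)$ would impose literally the same condition. The introduction's point, that the corollary sharpens what \cite[Thm.~A]{PS17} already gives for $(X,\lceil R+\Delta\rceil)$, would then be empty. The paper's proof also ends with a coefficientwise comparison, which would be pointless.

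You instead take the strict transform of $\operatorname{Supp}(R+\Delta)$. In Step~2 you apply \Cref{T:main-goodMM} with a boundary $\mathbf D$ that contains the strict transform of the whole ramification divisor $\mathbf R$, precisely so that Step~3 descends to the reduced divisor $D_{\mathrm{red}}$. What you obtain is bigness of $K_{X'}+\operatorname{Supp}(\mu^{-1}_*(R+\Delta))+\operatorname{Ex}(\mu)$, i.e.\ that $(X,\lceil R+\Delta\rceil)$ is of log general type. The corollary asserts more: a component of $R$ not lying in $\Delta$, with generic stabilizer of order $e$, only needs coefficient $1-\frac1e$. In the $j$-line example of the introduction this means $K_{\mathbb P^1}+\frac23p_0+\frac12p_{1728}+p_\infty$ rather than $K_{\mathbb P^1}+p_0+p_{1728}+p_\infty$. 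Your Step~4 is coefficient-blind for the same reason: birational invariance of the log Kodaira dimension of $X'\setminus\Delta'$ only concerns reduced boundaries.

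The repair is to not enlarge the boundary beyond what \Cref{T:main-goodMM} requires. Keep your Step~1, and apply the theorem on $\mathcal X''$ with $\mathbf D_0:=$ the strict transform of $\mathbf\Delta$ plus the reduced divisors exceptional over $\mathcal X$. This divisor is snc and still contains the discriminant. Then $K_{\mathcal X''}+\mathbf D_0=\pi''^*(K_{X''}+R''+D_0)$ is big, where $R''$ is the branch $\mathbb Q$-divisor of $\pi''$ and $\pi''^*D_0=\mathbf D_0$. The coefficients work out as follows:
\begin{itemize}
\item exceptional components get $(1-\frac1e)+\frac1e=1$;
\item strict transforms of components of $\Delta$ get $1$;
\item the strict transform of a ramification component outside $\Delta$ keeps coefficient $1-\frac1e$, because a representable birational modification does not change the generic stabilizer along a non-exceptional divisor.
\end{itemize}
Since your $X''$ dominates $X'$, pushing $K_{X''}+R''+D_0$ forward to $X'$ preserves bigness and gives exactly $K_{X'}+\mu^{-1}_*(R+\Delta)+\operatorname{Ex}(\mu)$. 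So no further resolution $X'''$ is needed.

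The paper organizes this slightly differently. It applies \Cref{T:main-goodMM} on an arbitrary log resolution of $(\mathcal X,\mathbf\Delta)$ with boundary $\mathbf\Delta^{st}+\operatorname{Ex}(\mu)$. It then uses that the descended pair $(X',R'+\Delta')$ is log canonical to pass to other log resolutions. It concludes with the same comparison $R'+\Delta'\le R^{st}+\Delta^{st}+\operatorname{Ex}(\mu)$.
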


The proof of \Cref{main-coarseMS-cor} is given in \S \ref{S:ProofMain}.
With this result, one can clearly explain the difference between our result  and  \cite[Thm.~A]{PS17}.  Assume we are in the situation where $\mathcal X$ and $\mathbf \Delta$ are as in  \Cref{T:main-goodMM}.
By virtue of the assumption that the stack $\mathcal X$ has trivial automorphisms at the generic point, the stack  $\mathcal X$ and the coarse moduli space $X$ agree on $X-(R\cup \Delta)$ outside of codimension $2$, and then, using the family over this open variety,  \cite[Thm.~A]{PS17} implies that $K_X+\lceil R+\Delta \rceil$ is big. From this it follows that $(X,\lceil R+\Delta \rceil)$ is of log general type (see the proof of \Cref{main-coarseMS-cor}).   
There seem to be two benefits to the stronger statement in \Cref{main-coarseMS-cor}, i.e.,  regarding the pair $(X,R+\Delta)$ rather than $(X,\lceil R+\Delta \rceil)$.  First, the coefficients of 
$R+\Delta$ may be smaller than those of $\lceil R+\Delta \rceil$, so that one has a more precise statement about how much of the boundary divisor should be  added to the canonical divisor to  obtain a big  divisor.  Second, 
 it seems plausible that there could be situations where $(X,R+\Delta)$  was log canonical but $(X,\lceil R+\Delta \rceil)$ was not; note that if $\mathbf \Delta\subseteq \mathcal X$ is snc, then    $(X,R+\Delta)$ is log canonical.

A familiar elementary example makes the difference between the results apparent.  Consider the family of elliptic curves $\{zy^2=x(x-z)(x-\lambda z)\}\subseteq \mathbb P^1_\lambda \times \mathbb P^2$ parameterized by the $\lambda$-line $\mathbb P^1_\lambda$, as well as the stack $[\mathbb P^1_\lambda /S_3]$, where $S_3$  acts in the natural way, with orbits 
$\{\lambda, 1-\lambda, \frac{1}{\lambda}, \frac{1}{1-\lambda}, \frac{\lambda-1}{\lambda}, \frac{\lambda}{\lambda-1}\}$.
The coarse moduli space $[\mathbb P^1_\lambda /S_3]\to \mathbb P^1_j$ can be taken to be the $j$-line, given by taking the $j$-invariant of the corresponding elliptic curve.  
We view $[\mathbb P^1_\lambda /S_3]$  as the base of a family of  $K$-trivial varieties, in which case both \Cref{main-coarseMS-cor} and \cite[Thm.~A]{PS17} apply to $\mathbb  P^1_j$, since the universal family descends over $\mathbb P^1_j-\{p_0,p_{1728}, p_\infty\}$, where here $p_j$ denotes the point parameterizing elliptic curves with $j$-invariant $j$.  
 For this example, we have 
 $R=(1-\frac{1}{3})p_{0}+ (1-\frac{1}{2})p_{1728}+(1-\frac{1}{2})p_{\infty}$ and $\Delta = \frac{1}{2}p_\infty$, 
 so that taking sums we have
$
 R+\Delta  \sim_{\mathbb Q} \frac{13}{6}p_0 \ \ \ \ \text {and} \ \ \ \ \lceil R+\Delta \rceil  \sim_{\mathbb Q} 3p_0
 $.
From \Cref{main-coarseMS-cor}, the assertion is that
$
K_{\mathbb P^1}+(R+\Delta) = K_{\mathbb P^1}+\frac{13}{6}p_0\sim_{\mathbb Q}\frac{1}{6}p_0
$
is big, while the results \cite[Thm.~A]{PS17}  imply that
$
K_{\mathbb P^1}+\lceil R+\Delta\rceil = K_{\mathbb P^1}+3p_0\sim p_0
$
is big. In this case, the two statements, that $\frac{1}{6}p_0$ and $p_0$ are big, are equivalent, but the former shows that it suffices to add only $\frac{13}{6}p_0$ to the canonical bundle to obtain a big line bundle (as opposed to $3p_0$ in the latter).  We discuss some further applications of \Cref{T:main-goodMM} and \Cref{main-coarseMS-cor} to some other standard  examples in \S \ref{S:examples}.

\subsection*{Acknowledgements}
The first named author thanks Mihnea Popa for  conversations on the topic, which led to this project.  He also thanks  Jonathan Wise and David Rydh for helpful conversations about the geometry of stacks, as well as Klaus Hulek  for conversations about the geometry of the moduli space of abelian varieties.  The second named author thanks his advisor, Mircea Musta\c{t}\u{a}, for useful discussions and all the support provided.  The authors are also grateful to the organizers of the Simons Collaborations on Moduli of Varieties Workshop at the University of Utah in November 2024, where their work on this project began.

\section{Preliminaries}\label{S:prelims}

\subsection{Terminology}
We use the same conventions as in \cite{CMZpositivity, CMZslope_stability, CMZfoliations}.
We work over $\mathbb C$.  
A \emph{variety} is an integral separated scheme of finite type over $\mathbb C$. 
An \emph{alteration} $X'\to X$ is a surjective projective generically
finite   morphism of schemes over $\mathbb C$.  
We use the definition of a \emph{Deligne--Mumford (DM) stack} in \cite[Def.~4.1]{LMB}. Note that this differs from the definition in \cite{stacks-project} in that there is the additional hypothesis in \cite[Def.~4.1]{LMB} that the diagonal be representable, separated, and quasi-compact.  We direct the reader to \cite[App.~B]{CMW18} for a discussion of the relationship among various definitions of DM stacks in the literature (see in particular \cite[Fig.~1]{CMW18}).  We emphasize that, with the definition of DM stack that we are using, a morphism from a scheme to a DM stack is schematic (representable by schemes); see e.g., \cite[Lem.~B.20 and Lem.~B.12]{CMW18}.

\subsection{Structure of DM stacks}\label{S:DM-intro}
Again, we use the same conventions as in \cite{CMZpositivity, CMZslope_stability, CMZfoliations}.
  The general set-up will be a smooth proper (resp.~separated) integral DM stack $\mathcal X$ of finite type over $\mathbb C$ with coarse moduli space $\pi: \mathcal X\to X$, with the added assumption that the algebraic space $X$ be a projective (resp.~quasi-projective) variety. 
Recall that such a stack admits a finite flat 
morphism $q:V\to \mathcal X$ from a smooth projective  (resp.~quasi-projective) 
variety $V$  (\cite[Thm.~1]{KV04} and \cite[Thm.~4.4]{kresch09}, see also \cite[\href{https://stacks.math.columbia.edu/tag/03B6}{\S 03B6}]{stacks-project}); note that $q$ is schematic and projective.  
Note also that by the proof in \cite[Thm.~1]{KV04}, one can take $q$ to be \'etale over any given finite collection of points of $\mathcal X$. 
In this situation we have that $X$ is normal, $\mathbb Q$-factorial, with at worst klt singularities.  The morphism $\pi:\mathcal X\to X$ is flat over the smooth locus of $X$; flatness is an \'etale local property, and so it suffices to consider the case $\mathcal X=[U/G]$ for some smooth variety $U$ and a finite group $G$.  Then, from say \cite[Cor.~14.12]{GW20}, it suffices to show that $U\to U/G$ is flat over the smooth locus of the quotient, which follows from the miracle of flatness \cite[Thm.~23.1, p.179]{matsumura}.

For brevity,  we will say that such a stack $\mathcal X$ is a global finite quotient stack if there is a smooth projective (resp.~quasi-projective) variety $V$ over $\mathbb C$ and a finite algebraic group $G$ over $\mathbb C$ acting on $V$ such that $\mathcal X\cong [V/G]$; note that this implies that $X=V/G$.  For context, recall that  $\mathcal X$ is a global finite quotient stack  if and only if there exists a smooth projective (resp.~quasi-projective) variety  $V'$ and a finite \emph{\'etale} morphism $q':V'\to \mathcal X$ (see the \emph{proof} of \cite[Thm.~(6.1)]{LMB}).

\subsection{Variation}\label{S:variation}

Let $f:\mathcal Y\to \mathcal X$ be a schematic projective morphism of smooth proper integral DM stacks over $\mathbb C$ with projective coarse moduli spaces, and assume  $f$ has geometrically connected generic fiber.  
Let $q':V'\to \mathcal X$ be a generically finite morphism from a smooth projective variety $V'$, let $Y'$ be a resolution of singularities of normalization of the main component of the  fibered product $V'\times_{\mathcal X}\mathcal Y$, and let $f':\mathcal Y'\to V'$ be the induced morphism of smooth projective varieties.  
We define the \emph{variation of $f$}, denoted $\operatorname{Var}(f)$,  to be the variation of $f'$.  This is independent of the choices, as one can always find common log resolutions.  We also have that the variation of $f$ agrees with  the variation of the family obtained after pull back by any \'etale cover of $\mathcal X$.

\subsection{Semi-stable reduction in codimension 1}
 We will want a version of semi-stable reduction in codimension one for stacks, generalizing \cite[Prop.~6.1]{Vkod1}:  
\begin{pro}

\label{P:PV6.1}
Let $f:\mathcal Y\to \mathcal X$ be a schematic projective morphism of smooth proper integral DM stacks over $\mathbb C$, such that $\mathcal X$ has a projective coarse moduli space.  Then there is a commutative diagram
$$
\xymatrix{
\widetilde {Y} \ar[r] \ar[d]_{\tilde f} & \mathcal Y \ar[d]^f\\
\widetilde { X} \ar[r]_\tau& \mathcal X
}
$$
with $\tau$ schematic projective and generically finite,
$\widetilde {X}$ and
$\widetilde {Y}$ smooth  projective varieties, and after removing
a closed substack  $\mathcal Z$ of codimension at least $2$ in $\mathcal X$, the morphism $\tau$  is finite and flat and $\tilde f$ is semi-stable. 
\end{pro}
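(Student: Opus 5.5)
The plan is to reduce to Viehweg's result \cite[Prop.~6.1]{Vkod1} for morphisms of smooth projective varieties, using a finite flat cover of $\mathcal X$ by a smooth projective variety. By \S\ref{S:DM-intro} (\cite[Thm.~1]{KV04}, \cite[Thm.~4.4]{kresch09}), there is a finite flat schematic projective morphism $q:V\to\mathcal X$ with $V$ a smooth projective variety. Set $Y_V$ to be a resolution of singularities of the main component(s) of $V\times_{\mathcal X}\mathcal Y$. Since $f$ is schematic projective, $V\times_{\mathcal X}\mathcal Y$ is a projective scheme over $V$, so $Y_V$ is a smooth projective variety and $f_V:Y_V\to V$ is projective. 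Over the dense open set where $q$ is étale and the fibered product is smooth, the resolution changes nothing.

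Next I apply \cite[Prop.~6.1]{Vkod1} to $f_V:Y_V\to V$. This gives a smooth projective variety $\widetilde X$, a generically finite projective morphism $\sigma:\widetilde X\to V$, and a smooth projective $\widetilde Y$ with $\tilde f:\widetilde Y\to\widetilde X$. Moreover, there is a closed subset $Z_V\subseteq V$ of codimension at least $2$ such that over $V\setminus Z_V$ the map $\sigma$ is finite and flat and $\tilde f$ is semi-stable. Recall that Viehweg builds $\widetilde X$ as a desingularized Kawamata cover branched along a normal crossings model of the discriminant. It is finite flat off codimension two because, away from the singular locus of the branch divisor, the cover is a cyclic cover of a smooth base, and a finite map from a smooth variety to a smooth variety is flat by miracle flatness. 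The map $\widetilde Y\to\mathcal Y$ is the composite $\widetilde Y\to Y_V\to\mathcal Y$ (possibly after a further resolution, which does not affect semi-stability over the open set). We then set $\tau=q\circ\sigma$. This is schematic since $\widetilde X$ is a scheme, projective since it is a composite of projective morphisms, and generically finite.

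The remaining step, and the one I expect to be the main subtlety, is to produce the closed substack $\mathcal Z\subseteq\mathcal X$ of codimension at least $2$. Take $\mathcal Z$ to be the image $q(Z_V)$. Since $q$ is finite, this image is closed and has the same dimension as $Z_V$, hence codimension at least $2$ in $\mathcal X$. Over $\mathcal X\setminus\mathcal Z$ we have $q^{-1}(\mathcal X\setminus\mathcal Z)\subseteq V\setminus Z_V$, so $\sigma$ is finite flat there, and $q$ is finite flat everywhere. Hence $\tau$ is finite and flat over $\mathcal X\setminus\mathcal Z$, and $\tilde f$ is semi-stable over $\tau^{-1}(\mathcal X\setminus\mathcal Z)$. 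One point must be checked: Viehweg's statement allows removing a codimension-two subset and might only give flatness of $\sigma$ over $V\setminus Z_V$ rather than over all of $\sigma^{-1}(V\setminus Z_V)$. If needed, I would enlarge $Z_V$ by the non-flat locus of $\sigma$ restricted over the locus where $\sigma$ is finite. This locus has codimension at least $2$ in $V$ because $\widetilde X$ and $V$ are smooth, so by miracle flatness $\sigma$ is flat wherever it is finite, and $\sigma$ is finite over the complement of the codimension-two image of its positive-dimensional fibers.
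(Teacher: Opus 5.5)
Your proposal is correct and follows the paper's proof. Both arguments base change along a finite flat cover $q:V\to\mathcal X$ by a smooth projective variety, resolve the fibered product, and apply Viehweg's semi-stable reduction in codimension one to the resulting morphism of varieties; your explicit bookkeeping for the codimension-two locus (taking $\mathcal Z=q(Z_V)$, and noting via miracle flatness that $\sigma$ is flat wherever it is finite) just fills in details the paper leaves implicit.
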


\begin{proof}
Take a base change over  a schematic finite flat projective morphism $q:V\to \mathcal X$, where $V$ is a smooth projective variety.  Since $f$ is assumed to be schematic projective, we have that $Y_V:=V\times_{\mathcal X} \mathcal Y$ is projective.  
Applying the standard results for varieties (e.g., take a resolution of singularities $Y_V'\to Y_V$ and apply \cite[Lem.~6.1]{Vkod1} to $Y_V'\to V$), one obtains the diagram above with the asserted properties.
\end{proof}

\subsection{Weak semi-stable reduction}

The same strategy used in the proof of \Cref{P:PV6.1} provides a version of weak semi-stable reduction extending \cite[Thm.~0.3]{AK2000} to the setting of stacks:

\begin{pro}[{\cite[Thm.~0.3]{AK2000}}]
  \label{P:AK2000-Thm03}
Let $f:\mathcal Y\to \mathcal X$ be a schematic projective morphism of smooth proper integral DM stacks over $\mathbb C$, with geometrically connected fibers, such that $\mathcal X$ has a projective coarse moduli space.  Then there is a commutative diagram
\begin{equation}\label{E:AK2000-diag}
\xymatrix{
\widetilde {Y} \ar[r] \ar[d]_{\tilde f} & \mathcal Y \ar[d]^f\\
\widetilde { X} \ar[r]_\tau& \mathcal X
}
\end{equation}
with $\tau$ schematic projective and generically finite,
$\widetilde {X}$ and
$\widetilde {Y}$ projective varieties, $\widetilde X$ smooth, and  $\tilde f$ weakly semi-stable (see \cite[Def.~0.1]{AK2000}).
\end{pro}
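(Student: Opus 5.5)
The plan is to run the same reduction as in the proof of \Cref{P:PV6.1}: pass to a scheme cover of $\mathcal X$, where everything is schematic and projective, and then quote \cite[Thm.~0.3]{AK2000} directly.

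First I fix a schematic finite flat projective morphism $q:V\to \mathcal X$ with $V$ a smooth projective variety. Such a $q$ exists by \cite[Thm.~1]{KV04} and \cite[Thm.~4.4]{kresch09}, recalled in \S\ref{S:DM-intro}.

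Next I base change along $q$. Since $f$ is schematic and projective, $Y_V:=V\times_{\mathcal X}\mathcal Y\to V$ is a projective morphism of schemes, so $Y_V$ is a projective scheme. The fibers of $Y_V\to V$ are fibers of $f$, so they are geometrically connected. In general $Y_V$ need not be integral, since it may be reducible over the ramification of $q$. I therefore take the main component of $Y_V$, meaning the closure of the preimage of the locus where $q$ is étale. Because $f$ has connected fibers and $V\to\mathcal X$ is dominant, this main component dominates $V$ and has geometrically connected generic fiber. If it is convenient, I also replace it by a resolution $Y'_V$, which is allowed because \cite{AK2000} requires only a surjective projective morphism of varieties with geometrically integral generic fiber.

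I then apply \cite[Thm.~0.3]{AK2000} to $Y'_V\to V$. This produces an alteration $\widetilde X\to V$ with $\widetilde X$ smooth projective, and a modification $\widetilde Y$ of the main component of $\widetilde X\times_V Y'_V$, such that $\tilde f:\widetilde Y\to\widetilde X$ is weakly semi-stable. I set $\tau$ to be the composition $\widetilde X\to V\xrightarrow{q}\mathcal X$. This map is schematic, because a morphism from a scheme to a DM stack is schematic. It is projective as a composition of projective maps, and it is generically finite. The map $\widetilde Y\to\mathcal Y$ is the composition $\widetilde Y\to Y'_V\to Y_V\to\mathcal Y$, and the square \eqref{E:AK2000-diag} commutes by construction.

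The main obstacle is the small technical point about connectedness and integrality of the generic fiber after base change. Weak semi-stable reduction in \cite{AK2000} is stated for morphisms with geometrically integral generic fiber; if the generic fiber is not geometrically integral, I would first pass to the Stein factorization. Since $f$ has geometrically connected fibers and the total spaces are normal, the Stein factorization of $Y'_V\to V$ is birational over $V$, and $V$ is normal, so no change of base is needed. I would verify this point carefully. Apart from it, all the steps are formal.
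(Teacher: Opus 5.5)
Your proposal is correct and is essentially the paper's argument: base change along a finite flat projective cover $q:V\to\mathcal X$ by a projective variety, apply \cite[Thm.~0.3]{AK2000} to the resulting morphism of projective varieties, and compose to get $\tau$. The extra bookkeeping is harmless but not needed. Since $q$ is flat and the generic fiber of $f$ is smooth and geometrically connected, $Y_V$ is already integral with geometrically integral generic fiber, so neither a main component nor a Stein factorization is required (and your remark that $Y_V$ may be reducible over the ramification of $q$ is not accurate).
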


\begin{proof}
  Indeed, after finite flat base change by a morphism $q:V\to \mathcal X$, where $V$ is a projective variety (\cite[Thm.~1]{KV04} and \cite[Thm.~4.4]{kresch09}, see also \cite[\href{https://stacks.math.columbia.edu/tag/03B6}{\S 03B6}]{stacks-project}), one can immediately employ \cite[Thm.~0.3]{AK2000}.
\end{proof}

\begin{rem}
Using  \cite[Thm.~2.7, Thm.~4.5, and Rem.~4.6]{ALT}, one can in fact take $\widetilde Y$ to be smooth and $\tilde f$ to be semi-stable.  
\end{rem}

\subsection{Kawamata's covering trick via root stacks}\label{S:KawamataCov}

Recall Kawamata's covering trick for varieties \cite[Cor.~2.6]{Vmoduli}:
\emph{Let $\tau: X'\to  X$ be a finite surjective morphism of quasi-projective  varieties.  
Assume that $X$ is smooth, and that for some normal crossing divisor
$ D = \sum_{j=1}^r D_j$ on $X$, the covering $\tau^{-1}(X- D)\to X-D$ 
 is \'etale.
Then there
exists a \emph{finite} morphism  $\gamma' : Z'\to X'$ from a  \emph{smooth} quasi-projective variety $Z'$, with $\gamma'$  \'etale over  $\tau^{-1}(X- D)$.}  

The basic strategy of the argument for the covering trick is to take roots of the divisors $D_j$ to obtain a morphism $\gamma:Z\to X$ so that the base change $Z'=Z\times_XX'\to Z$ is \'etale outside of a codimension-$2$ locus,  and then use purity of the branch locus to obtain that $Z'$ is smooth.  Since there may not exist roots of the $D_j$, the trick is to use sufficiently general hyperplane sections to add to the $D_j$, to ensure that one does have  roots, and that the branch locus is still normal crossings.  For stacks, we do not in general have such very ample divisors at our disposal, but, we can utilize the root stack construction instead:

\begin{lem}[Kawamata's covering trick]\label{C:KawCovTrick}
Let $\tau: \mathcal X'\to \mathcal X$ be a finite surjective morphism of  integral separated DM stacks of finite type over $\mathbb C$ with (quasi-)projective coarse moduli spaces.  
Assume that $\mathcal X$ is smooth, and that for some normal crossing divisor
$\mathcal D = \sum_{j=1}^r\mathcal D_j$ on $\mathcal X$, with the $\mathcal D_j$ irreducible, the covering $\tau^{-1}(\mathcal \mathcal X-\mathcal D)\to \mathcal X-D$ is \'etale.
Then there
exists a commutative diagram
\begin{equation}\label{E:C:KawCovTrick}
\xymatrix{
\mathcal Z'\ar[r]^{\gamma'}\ar[d]_{\tau'}& \mathcal X' \ar[d]^\tau\\
\mathcal Z\ar[r]^\gamma & \mathcal X
}
\end{equation}
where $\gamma$ is a composition of root stack constructions taking roots of the $\mathcal D_j$ (and their strict transforms), and $\tau'$ is \'etale.  In particular,  $\mathcal Z'$ is smooth with (quasi-)projective coarse moduli space, the divisor $\gamma^{-1}\mathcal D$ is normal crossing,  and $\gamma'$ is finite and  \'etale over  $\tau^{-1}(\mathcal X-\mathcal D)$.
\end{lem}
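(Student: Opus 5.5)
The plan is to imitate the variety case, but to replace the auxiliary hyperplane sections by root stacks.

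\emph{Step 1: ramification indices.} Since $\tau$ is finite and \'etale over $\mathcal X-\mathcal D$, we may work \'etale locally on $\mathcal X$ and pass to a chart $U\to\mathcal X$. There $\tau$ becomes a finite cover of a smooth variety, branched along a normal crossing divisor. Its ramification along each component $\mathcal D_j$ is recorded by finitely many ramification indices of the components of $\tau^{-1}(\mathcal D_j)$ over the generic point of $\mathcal D_j$. Let $N_j$ be the least common multiple of these indices, or any common multiple of them.

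\emph{Step 2: the root stack.} Define $\gamma:\mathcal Z\to\mathcal X$ as the iterated root stack $\sqrt[N_1]{\mathcal D_1/\mathcal X}\times_{\mathcal X}\cdots$. When the $\mathcal D_j$ are not smooth (self-crossings), take roots successively of the strict transforms, as the statement allows. Locally, where $\mathcal D$ is $x_1\cdots x_k=0$, the root stack is $[\operatorname{Spec}\mathbb C[y_1,\dots,y_n]/\prod\mu_{N_i}]$ with $y_i^{N_i}=x_i$. It follows that:
\begin{itemize}
\item $\mathcal Z$ is a smooth separated DM stack;
\item $\gamma$ is an isomorphism over $\mathcal X-\mathcal D$;
\item $\gamma^{-1}\mathcal D$ is normal crossing, with reduced components the root gerbes;
\item $\mathcal Z$ has the same coarse moduli space as $\mathcal X$, since root stacks do not change the coarse space. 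Hence the coarse space of $\mathcal Z$ is (quasi-)projective.
\end{itemize}

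\emph{Step 3: the normalized fiber product.} Let $\mathcal Z'$ be the normalization of $\mathcal Z\times_{\mathcal X}\mathcal X'$, or of its main components. By Abhyankar's lemma, applied on the smooth local charts $\operatorname{Spec}\mathbb C[y]$, the cover $\mathcal Z'\to\mathcal Z$ is \'etale at the generic points of $\gamma^{-1}\mathcal D$, because $N_j$ absorbs all ramification indices. So $\tau'$ is \'etale in codimension one on the smooth stack $\mathcal Z$. Zariski--Nagata purity of the branch locus holds \'etale locally, so it applies on an atlas, and $\tau'$ is \'etale everywhere. Consequently $\mathcal Z'$ is smooth. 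Its coarse space is finite over that of $\mathcal Z$, hence (quasi-)projective. Finally, $\gamma'$ is finite: it is quasi-finite and proper, since $\gamma$ is proper quasi-finite and normalization is finite. It is \'etale over $\tau^{-1}(\mathcal X-\mathcal D)$ because $\gamma$ is an isomorphism there.

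\emph{Main obstacle.} The delicate point is Step 3 at self-intersection points of $\mathcal D$ and at points of $\mathcal X$ with nontrivial stabilizers, where the local picture involves the group action. To handle this, I would apply Abhyankar's lemma to the smooth chart of the root stack, pulled back along an \'etale chart $U\to\mathcal X$ on which $\mathcal D$ becomes snc after further étale localization, and then descend. A second check is that $\gamma'$ is representable and finite on the non-schematic locus. This follows because $\mathcal X'\to\mathcal X$ is representable and the inertia of $\mathcal Z'$ is controlled by that of $\mathcal Z$, or one may simply regard it as a finite morphism of DM stacks.
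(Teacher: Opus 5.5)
Your proposal is correct and follows essentially the same route as the paper. The shared steps are: $N_j$ is the lcm of the ramification indices over $\mathcal D_j$; one takes root stacks along the $\mathcal D_j$ and passes to the normalized main component of the fiber product; the local (Abhyankar-type) computation gives \'etaleness at the generic points of $\gamma^{-1}\mathcal D$; and purity of the branch locus finishes. The paper merely adjoins the roots one divisor at a time, normalizing at each step, rather than all at once.

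One side claim in your last paragraph is false, though you do not need it: $\gamma'$ is in general not representable. If a component of $\tau^{-1}(\mathcal D_j)$ has ramification index $1$ while $N_j>1$, then near its generic point $\mathcal Z'$ is simply the $N_j$-th root stack of $\mathcal X'$ along that component, and $\gamma'$ is a root-stack morphism there. So ``finite'' for $\gamma'$ must be read as proper and quasi-finite, which is exactly what your Step~3 establishes.
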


\begin{proof}
The strategy of the proof is the same as  \cite[Cor.~2.6]{Vmoduli}.  For $j=1,\dots,r$, choose
$$
N_j:=\operatorname{lcm}\{e(\mathbf \Delta^i_j) : \mathbf \Delta^i_j \text { is a component of } \tau^{-1}(\mathcal D_j)\},
$$
where $e(\mathbf \Delta^i_j)$ denotes the ramification index of $\mathbf \Delta^i_j$ over $\mathcal D_j$.  
Let $\gamma_1:\mathcal Z_1\to \mathcal X$ be the root stack obtained by taking the $N_1$-th root of $\mathcal D_1$.  A local computation, identical to the case of varieties, shows that by taking $\mathcal Z_1'$ to be an irreducible component of the normalization of $\mathcal Z_1\times_\mathcal X \mathcal X'$ that dominates $\mathcal Z_1$ and $\mathcal X'$, one obtains a diagram  
$$
\xymatrix{
\mathcal Z_1'\ar[r]^{\gamma'_1}\ar[d]_{\tau'}& \mathcal X' \ar[d]^\tau\\
\mathcal Z_1\ar[r]^{\gamma_1} & \mathcal X
}
$$
with $\mathcal Z_1$ smooth, and the rest of the diagram satisfying all of the conditions of the lemma at the generic point of $\mathcal D_1$.  
Proceeding inductively, one obtains a diagram as in \eqref{E:C:KawCovTrick}, with $\mathcal Z$ smooth, and satisfying all of the conditions of the lemma  at the generic points of the $\mathcal D_i$.  Note that one avoids some of the complications, compared to the construction \cite[Cor.~2.6]{Vmoduli} (which is via the construction in \cite[Lem.~2.5]{Vmoduli}, requiring normalizations of fibered products to ensure smoothness), since root stacks of smooth divisors on smooth stacks are smooth, and normal crossing divisors remain normal crossings after pull back by a root stack construction along one of the irreducible divisors.  
  Using purity of the branch locus (e.g., \cite[\href{https://stacks.math.columbia.edu/tag/0BMB}{Lem.~0BMB}]{stacks-project}), one obtains that $\tau'$ is \'etale.  The rest of the lemma follows by construction. 
\end{proof}

\subsection{Mild reduction}

Using \Cref{P:AK2000-Thm03} and the results in \cite{AK2000}, one gets a version of mild reduction as in \cite[\S 2]{VZ03} (\Cref{P:VZ03S2}).  We refer the reader to \cite[Def.~2.1]{VZ03} for the definition of a mild morphism of projective varieties.

\begin{pro}[Mild reduction]

\label{P:VZ03S2}
Let $f:\mathcal Y\to \mathcal X$ be a schematic projective morphism of smooth proper integral DM stacks over $\mathbb C$, with geometrically connected generic fiber, such that $\mathcal X$ has a projective coarse moduli space, and let $\mathbf \Delta\subseteq \mathcal X$ be a normal crossings divisor containing the discriminant of $f$.
  Then there exists a  diagram 
\begin{equation}\label{E:VZS2diag}
\xymatrix{
Z'  \ar[d]^{g'}&  Y''\ar[r]^\rho \ar[d]^{f''} \ar[l]_{\delta}&  Z \ar[r]^\sigma \ar[d]^g&  Y' \ar[r]^{\tau'} \ar[d]^{f'}& \widetilde {\mathcal Y}\ar[r]^{\widetilde \mu} \ar[d]^{\tilde f}&\mathcal Y \ar[d]^f\\
 X'\ar@{=}[r]&  X' \ar@{=}[r]&  X'\ar@{=}[r]&  X'\ar[r]^\tau& \widetilde {\mathcal X}\ar[r]^\mu& \mathcal X
}
\end{equation}
satisfying the following conditions:
\begin{enumerate}[label=(\roman*)]
\item  $\widetilde {\mathcal X}$ and $\widetilde {\mathcal Y}$ are smooth proper integral DM stacks  over $\mathbb C$ with projective coarse moduli spaces, $ X'$, $ Y''$, and $ Z$  are smooth projective varieties, $Y'$ is a normal projective variety, $Z'$ is a Gorenstein projective variety with at worst rational singularities, and  $\tilde f$ is projective.

\item $\tau$ is finite and flat and $ Y'$ is obtained as the normalization of the main component of $ X'\times_{\widetilde {\mathcal X}}\widetilde {\mathcal Y}$;

\item $\mu$, $\widetilde \mu$, $\rho$,  and $\delta$ are birational, $\mu$ is a blow-up, and $\sigma$ is a blowing up with center in the singular locus of $Y'$;

\item \label{P:VZ03S2DNC} there is an effective divisor  $\mathbf \Delta(X'/\widetilde {\mathcal X})\subseteq \widetilde {\mathcal X}$ containing the  discriminant of $\tau$, so that for  $\tilde {\mathbf \Delta}$ the support of $\mu^{-1}\mathbf \Delta$, one has that $\tilde {\mathbf \Delta} +\mathbf \Delta(X'/\widetilde {\mathcal X})$ and $\tilde f^*(\tilde {\mathbf \Delta} +\mathbf \Delta( X'/\widetilde {\mathcal X}))$ are snc divisors;

\item $g': Z'\to  X'$ is mild.
\end{enumerate}
\end{pro}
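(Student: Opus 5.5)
The plan is to follow the construction in \cite[\S 2]{VZ03} step by step, and to use the stack-theoretic tools above (blow-ups of DM stacks, \Cref{P:AK2000-Thm03}, and \Cref{C:KawCovTrick}) wherever Viehweg--Zuo use very ample divisors or global constructions on the base. The diagram \eqref{E:VZS2diag} is built from right to left.

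\textbf{Step 1: the base $\mu:\widetilde{\mathcal X}\to\mathcal X$.} First apply \Cref{P:AK2000-Thm03} to $f$. This gives a generically finite schematic projective $\tau_0:\widetilde X_0\to\mathcal X$ with $\widetilde X_0$ smooth projective and a weakly semi-stable model over it. Let $\mathbf D\subseteq\mathcal X$ be the union of $\mathbf\Delta$ with the branch locus of $\tau_0$, that is, the image of the locus where $\tau_0$ is not finite \'etale. Then take a log resolution $\mu:\widetilde{\mathcal X}\to\mathcal X$ of $(\mathcal X,\mathbf D)$ by blowing up smooth centers. Functorial resolution works for DM stacks: it is \'etale local and therefore descends. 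The coarse space remains projective, since it is a blow-up of the coarse space of $\mathcal X$.

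\textbf{Step 2: the finite flat cover $\tau$ and the divisor $\mathbf\Delta(X'/\widetilde{\mathcal X})$.} Let $\tau_1$ be the normalization of $\widetilde{\mathcal X}$ in the function field of $\widetilde X_0$. This is finite, and it is \'etale outside an snc divisor. Apply \Cref{C:KawCovTrick} to $\tau_1$: this yields a root stack $\gamma:\mathcal Z\to\widetilde{\mathcal X}$ and a smooth $\mathcal Z'$, finite over the source of $\tau_1$, with $\mathcal Z'\to\mathcal Z$ \'etale. Composing root stack maps is not schematic, so a correction is needed to get a smooth projective \emph{variety} $X'$ with $\tau$ finite flat. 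I would precompose with a finite flat cover $V\to\mathcal Z'$ from a smooth projective variety, as in \S\ref{S:DM-intro}. By \cite{KV04} this cover can be taken to be \'etale over the generic points of the relevant divisors. The resulting map $X'\to\widetilde{\mathcal X}$ is then finite between smooth objects, hence flat by miracle flatness. Take $\mathbf\Delta(X'/\widetilde{\mathcal X})$ to be the union of the exceptional divisors, the branch divisors, and the ramification introduced by the root stacks. If necessary, perform further blow-ups of $\widetilde{\mathcal X}$ and $\widetilde{\mathcal Y}$ to make this divisor and its pullback under $\tilde f$ snc; here $\widetilde{\mathcal Y}$ is a resolution of $\mathcal Y\times_{\mathcal X}\widetilde{\mathcal X}$ via stacky embedded resolution. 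This gives (i)--(iv), with $Y'$ the normalization of the main component of the fibre product.

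\textbf{Step 3: the varieties $Z$, $Y''$, $Z'$.} Once $X'$ is a variety and everything to the left of $\tau$ is schematic, the remaining steps are statements about projective varieties over $X'$, and I would copy \cite[Lemma 2.3, Prop.~2.4]{VZ03} verbatim. Let $\sigma:Z\to Y'$ be a resolution by blowing up centers in the singular locus. Using the weak semi-stable model pulled back to $X'$, which stays weakly semi-stable after the further base change by \cite{AK2000}, we get $Y''\to X'$ birational to $Z$, and the mild model $Z'$ is obtained as in \cite{VZ03} by fibre products of toroidal models. Gorensteinness and rational singularities are inherited from weak semi-stability.

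\textbf{Main obstacle.} The hard part is Step 2. We must arrange simultaneously that $\tau$ is finite flat from a \emph{variety}, that it factors through the weakly semi-stable base change, and that the discriminant remains inside an snc divisor on the stack. I would first try to choose the Kawamata--Vistoli cover $V$ \'etale over the generic points of all components of $\tilde{\mathbf\Delta}$. If that fails, I would enlarge $\mathbf\Delta(X'/\widetilde{\mathcal X})$ by the branch divisor of $V$ and re-resolve, repeating the construction once.
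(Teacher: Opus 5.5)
There is a genuine gap, and it is the issue you flag as the main obstacle but do not resolve: your construction never produces a \emph{morphism} from $X'$ to the base $\widetilde X_0$ of the weakly semi-stable model. Without one, the mild family $g':Z'\to X'$ is never defined. In Step~2 you take $\tau_1$ to be the normalization of $\widetilde{\mathcal X}$ in the function field of $\widetilde X_0$. This is finite and \'etale off an snc divisor, but it is only \emph{birational} to $\widetilde X_0$. A log resolution of $(\mathcal X,\mathbf D)$ need not dominate the modifications built into $\tau_0$. For example, if $\tau_0$ blows up a point $p$ and then a point on the exceptional curve, a log resolution may just be $\mathrm{Bl}_p\mathcal X$. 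Its normalization in $K(\widetilde X_0)$ is itself, and it does not map to $\widetilde X_0$. So neither $\mathcal Z'$, nor $V$, nor $X'$ maps to $\widetilde X_0$, and ``the weak semi-stable model pulled back to $X'$'' is undefined. Mildness is not birational in the base: \cite[Lem.~2.2(ii)]{VZ03} needs an actual morphism, and $Z'$ must be a projective model over all of $X'$. Re-running \Cref{P:AK2000-Thm03} over $X'$ is circular, since it reintroduces a non-finite alteration. Your proposed remedies (choosing $V$ \'etale over generic points, enlarging $\mathbf\Delta(X'/\widetilde{\mathcal X})$) do not touch this issue.

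The missing idea is flattening. The paper applies Raynaud--Gruson flattening for stacks, \cite[Thm.~A]{rydhComact}, to $X_0'\to\mathcal X$. This gives a blow-up $\mu':\mathcal X_1\to\mathcal X$ such that the strict transform $X_1'\to\mathcal X_1$ is flat, hence finite, and $X_1'$ maps to $X_0'$ by construction. Only then does it log-resolve $\mathcal X_1$ along the discriminant plus boundary, and take $\widetilde X'$ to be the normalization of the main component of $\widetilde{\mathcal X}\times_{\mathcal X_1}X_1'$. This $\widetilde X'$ is exactly your $\tau_1$, but now it maps to $X_1'$ and hence to $X_0'$. Kawamata's trick (\Cref{C:KawCovTrick}) and the finite flat cover from \cite{KV04} compose with it. So $X'\to X_0'$ exists, and $Z':=Z_0'\times_{X_0'}X'$ is mild by \cite[Lem.~2.2(ii)]{VZ03}. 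If you require $\mu$ to factor through such a flattening blow-up, the rest of your outline goes through.

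Two smaller points. First, blowing up $\widetilde{\mathcal X}$ again after $\tau$ is built would destroy finiteness of $\tau$, so at that stage only $\widetilde{\mathcal Y}$ may be modified. Second, ``repeat the construction once'' is not shown to terminate for the branch locus of $V$. The paper instead uses a cover that keeps the boundary snc, as in \cite[proof of Thm.~7.1]{CMZpositivity}.
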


\begin{proof}
The argument is identical to that in \cite[\S 2]{VZ03}, except one uses  \Cref{P:AK2000-Thm03} to replace \cite[Thm.~0.3]{AK2000}, and  \cite[Thm.~A]{rydhComact} to replace  Raynaud--Gruson flattening (see also \cite[p.~721]{WW23}).  

For clarity, we include the details; diagram \eqref{E:VZ03S2const}, below,  will be useful for reference in what follows. The first step of the construction in \cite[\S 2]{VZ03} is to obtain a weak semi-stable reduction; we do this using  \Cref{P:AK2000-Thm03}, and obtain the weak semi-stable reduction
\begin{equation*}
\xymatrix{
Z_0' \ar[r] \ar[d] & \mathcal Y \ar[d]^f\\
X_0' \ar[r]& \mathcal X
}
\end{equation*}
We now apply 
Raynaud--Gruson flattening \cite[Thm.~A]{rydhComact} to the morphism $X_0'\to \mathcal X$.  This provides us with a blow-up $\mu':  {\mathcal X}_1\to \mathcal X$ such that the main component (strict transform) of the fibered product $\tau':X_1':=( \mathcal X_1\times_{\mathcal X}X_0')^{\sim} \to{\mathcal X}_1$ is flat; note that with $X_0'\to \mathcal X$ being generically finite, this implies that $\tau'$ is finite.  Let $\mathbf \Delta(X_1'/ {\mathcal X}_1)\subseteq \mathcal X_1$ denote the discriminant locus of $X_1'\to  {\mathcal X}_1$, and let $\mathcal B_1:= {\mathcal X}_1-\mu'^{-1}(\mathcal X-\mathbf \Delta)$ be the boundary divisor in $\mathcal X_1$.   Let $\widetilde {\mathcal X}\to \mathcal X_1$ be a strong log resolution of the pair $(\mathcal X_1,\mathbf \Delta(X_1'/ {\mathcal X}_1)+\mathcal B_1)$; denote the resulting boundary divisor in $\widetilde {\mathcal X}$ by $\widetilde {\mathcal B}$.  Take $\widetilde X'$ to be the normalization of the main component of $\widetilde {\mathcal X}\times_{\mathcal X_1}X_1'$, so that now $\mathbf \Delta(\widetilde X'/\widetilde {\mathcal X})+\widetilde {\mathcal B}$ is normal crossings. 
By Kawamata's covering construction, \Cref{C:KawCovTrick}, there exists a smooth proper integral DM stack $\mathcal X^"$ over $\mathbb C$ with projective coarse moduli space, which is finite over $\widetilde X'$. 
Let  $X'\rightarrow \mathcal{X}^"$ be a finite flat cover by a smooth projective variety so that the boundary divisor stays snc (see \cite[proof of Thm. 7.1]{CMZpositivity}). 

This now explains the spaces and morphisms in the bottom of diagrams \eqref{E:VZS2diag} and  \eqref{E:VZ03S2const}:
\begin{equation}\label{E:VZ03S2const}
\xymatrix@R=.5em{ Z'\ar[dd]_{g'} \ar[rrrrrrr]& &&&&&&Z_0'\ar@{-}[d]_{\text{weak-ssr}}\ar[rd]&\\
                        &&&&\widetilde {\mathcal Y}\ar[dd]_<>(0.25){\tilde f} \ar[rrrr]&&&\ar[d]&\mathcal Y \ar[dd]^f\\
                X' \ar[r] \ar@/_.5pc/[rrrrd]_\tau& \mathcal X^"\ar[rr]^{\text{Kawamata}} &&\widetilde X'\ar@{-}[r] \ar[rd]^{\tau''}&\ar[r]&X_1'\ar[rd]^{\tau'}\ar[rr]&&X_0'\ar[rd] &\\
                        &&&&\widetilde {\mathcal X}\ar[rr]^{\text{log-res}} \ar@/_2pc/[rrrr]^\mu &&\mathcal X_1\ar[rr]^{\text{RGf-blow-up}}_{\mu'}&&\mathcal X\\
}
\end{equation}
Take $\widetilde {\mathcal Y}$ to be a strong log resolution of singularities of the main component of the fibered product $\widetilde {\mathcal X}\times_{\mathcal X}\mathcal Y$.  Take  $ Z'\to  X'$ to be  the pull back of $Z_0'\to X_0'$, which is mild (\cite[Lem. 2.2(ii)]{VZ03}).
 We have explained the vertical morphisms displayed in \eqref{E:VZS2diag}, as well as the morphisms $g'$, $\tilde f$, and $f$ in \eqref{E:VZ03S2const}.
Let $\sigma:Z\to Y'$ be a strong resolution of singularities.  Finally, let $ Y''$ be a strong resolution of the birational map $ Z'\dashrightarrow  Z$.  This explains the rest of the diagram \eqref{E:VZS2diag}, and the rest of the assertions of the proposition follow by construction.

Note that from the definition of a mild morphism, we  have that $Z'$ is integral with rational singularities, and that $g'$ is a Gorenstein morphism; since in addition $X'$ is smooth, we can conclude  that  $Z'$ is Gorenstein \cite[\href{https://stacks.math.columbia.edu/tag/0C11}{Tag
 0C11}]{stacks-project}.
\end{proof}

\section{Duality on DM stacks and some lemmas of Viehweg and Zuo}\label{S:mild_redux}

We will recall some results of \cite{nironi09} on duality for DM stacks, in order to generalize some results of Viehweg and Zuo \cite{Vkod1, VZ03} to DM stacks.  

\subsection{Duality for DM stacks}\label{S:DuDM}
The main result that we want
 is \cite[Thm.~1.16]{nironi09}: \emph{Let $f:\mathcal  X\to \mathcal Y$ be a separated quasi-compact morphism of DM  stacks.  Then on the bounded derived categories of quasi-coherent sheaves, the derived push forward $Rf_*:D^+(\mathcal X)\to D^+(\mathcal Y)$ has a right adjoint $f^!:D^+(\mathcal Y)\to D^+(\mathcal X)$.}

A key property we will use is \cite[Cor.~2.3]{nironi09} that 
duality for proper morphisms of DM stacks is \'etale local in
the sense that given a $2$-cartesian diagram of quasi-compact morphisms of stacks
$$
\xymatrix{
\mathcal X'\ar[d]_{f'} \ar[r]^{g'}& \mathcal X \ar[d]_f\\
\mathcal Y'\ar[r]^g& \mathcal Y
}
$$
where $f$ is proper and $g$ is \'etale and representable, then there is a canonical isomorphism of functors on the bounded derived categories
$$
g'^*f^! \to f'^!g^*.
$$
We will also use that for a separated quasi-compact representable \'etale morphism $g:\mathcal Y'\to \mathcal Y$ of DM stacks, there is a natural identification $g^!=g^*$.   

 For  a separated quasi-compact morphism of DM stacks $f:\mathcal  X\to \mathcal Y$, the relative dualizing complex is 
 $
 \omega^\bullet_{\mathcal X/\mathcal Y}:= f^!\mathcal O_{\mathcal Y}.
$
For $\mathcal Y=\operatorname{Spec}\mathbb C$, we simply denote this by
$\omega_{\mathcal X}^\bullet$.  Note that if $p:U\to \mathcal X$ is an \'etale presentation, then we have
$
p^*\omega_{\mathcal X}^\bullet =p^!f^!\mathcal O_{\operatorname{Spec}\mathbb C}=\omega_{U}^\bullet,
$
the usual dualizing complex for $U$ over $\mathbb C$. For a stack $\mathcal X$ that has Gorenstein (resp.~Cohen--Macaulay or CM) singularities, we define the canonical sheaf of $\mathcal X$  as  $$\omega_{\mathcal X}:=\omega_{\mathcal X}^\bullet[-\dim \mathcal X],$$ which is also quasi-isomorphic to a line bundle (resp.~sheaf).

Assume that  $\mathcal X$ is  $S_2$ and $G_1$ (Gorenstein in codimension $1$), and let $j:\mathcal U\hookrightarrow \mathcal X$ be an open substack such that $\mathcal U$ is $G_1$ and the complement of $\mathcal U$ is codimension at least $2$.  Then we define the canonical sheaf of $\mathcal X$ as (e.g.,  \cite[Def.~5.6]{kovacs13_stable}):
$$
 \omega_{\mathcal X}:= j_*\omega_{\mathcal U}.
 $$   This is independent of the choice of such an open substack, and note that if $\mathcal X$ is CM, then this agrees with the definition of $\omega_{\mathcal X}$ above (e.g., the argument for \cite[Lem.~5.7]{kovacs13_stable}).  As $\omega_{\mathcal U}$ is a line bundle and $\mathcal X$ is $S_2$, we have that $\omega_{\mathcal X}$ is reflexive.  
Note also that if $\Omega^1_{\mathcal X}$ is the quotient of a torsion free sheaf on $\mathcal X$ (e.g., $\mathcal X$ is a closed substack of a smooth substack, so that the conormal sequence defines such a surjection), then  
$\omega_{\mathcal X}\cong \det \Omega^1_{\mathcal X}$; here we are taking the determinant as in \cite[\S 1.5]{CMZslope_stability}.

Returning to the relative case where $f:\mathcal X\to \mathcal Y$ is a separated quasi-compact morphism of integral separated DM stacks of finite type over $\mathbb C$ with $\mathcal X$ CM and $\mathcal Y$ Gorenstein, we have that $\omega_{\mathcal X/\mathcal Y}^\bullet [\dim \mathcal Y-\dim \mathcal X]$ is quasi-isomorphic to a sheaf we call the relative canonical sheaf, and will denote by $\omega_{\mathcal X/\mathcal Y}$.  Moreover, 
\begin{equation}\label{E:omegaX/Y-CM-G}
\omega_{\mathcal X/\mathcal Y} \cong \omega_{\mathcal X}\otimes f^*\omega_{\mathcal Y}^{-1};
\end{equation}
this follows \'etale-locally from the case of schemes (e.g., \cite[Rem.~26 (vii)]{kleiman80rel-duality}). If, in the same situation, we assume instead that $\mathcal X$ is only $S_2$ and $G_1$, then we use \eqref{E:omegaX/Y-CM-G} to define the relative canonical sheaf.  In the case where $\mathcal Y=\operatorname{Spec}\mathbb C$, one has that $\omega_{\mathcal X/\mathcal Y}=\omega_{\mathcal X}$. 

 For $m>0$ we observe that  
\begin{equation*}
\omega_{\mathcal X/\mathcal Y}^{[m]}:=(\omega_{\mathcal X/\mathcal Y}^{\otimes m})^{\vee\vee}= ((\omega_{\mathcal X})^{\otimes m})^{\vee \vee} \otimes f^*\omega_{\mathcal Y}^{-m}= \omega_{\mathcal X}^{[m]}\otimes f^*\omega_{\mathcal Y}^{-m}
\end{equation*}
so that if $\mathcal V\hookrightarrow \mathcal Y$ is an open substack such that $j:\mathcal U:=f^{-1}\mathcal V\hookrightarrow \mathcal X$ is an open substack such that $\mathcal U$ is $G_1$ and the complement is codimension at least $2$, then 
\begin{equation}\label{E:RelCanRefl}
\omega_{\mathcal X/\mathcal Y}^{[m]}=j_*\omega_{\mathcal U/\mathcal V}^{\otimes m}.
\end{equation}

Given a $2$-commutative diagram of separated quasi-compact morphisms of DM stacks
\begin{equation}\label{E:rel-tr}
\xymatrix{
\mathcal X \ar[rr]^f \ar[rd]_g &&\mathcal Y \ar[ld]^h\\
&\mathcal S&
}
\end{equation}
the adjoint property of the functors $(Rf_*,f^!)$ induces a so-called trace map
\begin{equation}\label{E:rel-tr-df}
tr: Rf_*\omega_{\mathcal X/\mathcal S} \to \omega_{\mathcal Y/\mathcal S}.
\end{equation}

The following will be useful in what follows:

\begin{lem}\label{L:rat}
Let $\mathcal X$ be an integral separated DM stack of finite type over $\mathbb C$.  Then $\mathcal X$ has rational singularities if and only if $\mathcal X$ is CM and for any resolution of singularities $\mu:\mathcal X'\to \mathcal X$, one has that the trace map $R\mu_*\omega_{\mathcal X'}\to \omega_{\mathcal X}$ is an isomorphism.  
\end{lem}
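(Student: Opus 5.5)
The plan is to reduce \Cref{L:rat} to the known statement for schemes (Kempf's criterion; e.g.\ \cite[Thm.~5.10]{KollarMori} in the scheme case) by working étale locally. The point is that every notion involved is étale local: having rational singularities, being CM, the dualizing complex, and the trace map.

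First I fix an étale presentation $p:U\to\mathcal X$ with $U$ a scheme, which I may take to be a disjoint union of affine varieties. Let $\mu:\mathcal X'\to\mathcal X$ be a resolution of singularities. Since $\mu$ is proper, schematic in the cases of interest, and birational, the base change $\mu_U:U':=U\times_{\mathcal X}\mathcal X'\to U$ is again a resolution of singularities of $U$. Indeed, $U'\to\mathcal X'$ is étale, so $U'$ is smooth, and birationality is preserved over each component.

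Next I transport the relevant objects along $p$:
\begin{itemize}
\item \emph{Dualizing complexes.} Using \cite[Cor.~2.3]{nironi09} and $p^!=p^*$ for étale representable maps, $p^*\omega^\bullet_{\mathcal X}\cong\omega^\bullet_U$, and similarly $p'^*\omega^\bullet_{\mathcal X'}\cong\omega^\bullet_{U'}$.
\item \emph{CM property.} $\mathcal X$ is CM if and only if $U$ is, and in that case $p^*\omega_{\mathcal X}=\omega_U$.
\item \emph{Pushforward.} Flat base change for the proper morphism $\mu$ gives $p^*R\mu_*\omega_{\mathcal X'}\cong R\mu_{U*}\omega_{U'}$.
\item \emph{Trace map.} The trace map \eqref{E:rel-tr-df} is compatible with these identifications. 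This is because the base change isomorphism $g'^*f^!\to f'^!g^*$ is compatible with the counit of the adjunction $(Rf_*,f^!)$.
\end{itemize}
Since $p$ is faithfully flat, the trace $R\mu_*\omega_{\mathcal X'}\to\omega_{\mathcal X}$ is an isomorphism if and only if its pullback $R\mu_{U*}\omega_{U'}\to\omega_U$ is one.

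Finally I apply Kempf's criterion on $U$: $U$ has rational singularities if and only if $U$ is CM and the trace map for (any) resolution is a quasi-isomorphism. Here rational singularities for $\mathcal X$ is by definition, or by étale locality, equivalent to rational singularities for $U$. For the "for any resolution" clause, I note that in the scheme case the criterion holds for every resolution, and every resolution of $\mathcal X$ pulls back to a resolution of $U$; the converse direction needs only one resolution, which exists for DM stacks by functorial resolution.

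The main obstacle I expect is the compatibility of the trace map with étale base change, that is, checking that the isomorphism of \cite[Cor.~2.3]{nironi09} intertwines the counits. I would deduce this from the construction of that isomorphism as the adjoint of the base-changed counit, mirroring the scheme case. A secondary subtlety is that the resolution $\mu$ of a stack may need to be taken representable; functorial resolution guarantees this.
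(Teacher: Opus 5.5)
Your proposal is correct and follows the same route as the paper, whose proof consists only of the remark that the lemma follows étale locally from the scheme case \cite[Thm.~5.10]{KM98}. You fill in the details the paper leaves implicit: that $p^*\omega^\bullet_{\mathcal X}\cong\omega^\bullet_U$, flat base change for $R\mu_*$, compatibility of the trace with étale base change, faithful flatness of $p$, and the tacit treatment of $\mu$ as schematic, which is the paper's standing convention for resolutions.
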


\begin{proof}
This follows \'etale locally from the standard results for varieties (e.g., \cite[Thm.~5.10]{KM98})
\end{proof}

Note that in the situation of \Cref{L:rat}, we have an induced morphism
\begin{equation}\label{E:counit-mu}
\mu^*\omega_{\mathcal X}=\mu^*\mu_*\omega_{\mathcal X'}\longrightarrow \omega_{\mathcal X'}, 
\end{equation}
which is an isomorphism away from the exceptional divisor.

\begin{cor}\label{C:rat-tens}
In the situation of \Cref{L:rat} with $\mathcal X$ assumed to have rational singularities, if $\mathcal X$ is Gorenstein, then the induced morphism
\begin{equation}\label{E:rat-tens}
\mu^*\omega_{\mathcal X}\longrightarrow \omega_{\mathcal X'} 
\end{equation}
induces for each $k\ge 1$ an isomorphism
$$
\mu^*\omega_{\mathcal X}^{\otimes k} \stackrel{\sim}{\longrightarrow} \omega_{\mathcal X'}^{\otimes k}(-k\mathcal E)
$$
for an effective exceptional divisor $\mathcal E$. 
In particular, Gorenstein and rational implies canonical (take $k=1$).  
\end{cor}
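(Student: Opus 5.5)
The plan is to produce the map $\mu^*\omega_{\mathcal X}^{\otimes k}\to\omega_{\mathcal X'}^{\otimes k}$ globally on the stack, and then to identify its image with $\omega_{\mathcal X'}^{\otimes k}(-k\mathcal E)$ after passing to an étale atlas.

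First I construct the divisor $\mathcal E$ for $k=1$. Because $\mathcal X$ is Gorenstein, $\omega_{\mathcal X}$ is a line bundle, and therefore so is $\mu^*\omega_{\mathcal X}$. Since $\mathcal X'$ is smooth, $\omega_{\mathcal X'}$ is a line bundle as well. The morphism \eqref{E:counit-mu} is therefore a map of line bundles on the integral stack $\mathcal X'$. It is nonzero, since it is an isomorphism away from the exceptional locus. A nonzero map of line bundles $L\to M$ on an integral smooth stack amounts to a nonzero section of $M\otimes L^{-1}$. Its zero locus is an effective Cartier divisor $\mathcal E$, and the map identifies $L$ with $M(-\mathcal E)$. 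Because the map is an isomorphism off the exceptional locus, $\mathcal E$ is supported on the exceptional divisor. This gives $\mu^*\omega_{\mathcal X}\cong\omega_{\mathcal X'}(-\mathcal E)$.

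Taking $k$-th tensor powers of this isomorphism of line bundles gives $\mu^*\omega_{\mathcal X}^{\otimes k}=(\mu^*\omega_{\mathcal X})^{\otimes k}\cong \omega_{\mathcal X'}^{\otimes k}(-k\mathcal E)$. This isomorphism is induced by the $k$-th tensor power of \eqref{E:rat-tens}, as required.

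The only remaining point, and the main obstacle, is why $\mathcal E$ is effective, i.e.\ why the section does not acquire poles. This is built into the construction, because the map goes from $\mu^*\omega_{\mathcal X}$ to $\omega_{\mathcal X'}$ and not the other way. Still, one should check that \eqref{E:counit-mu} really is a morphism and not merely a rational map. For this I use \Cref{L:rat}: rationality gives $\mu_*\omega_{\mathcal X'}\cong\omega_{\mathcal X}$, and the counit of the adjunction $\mu^*\dashv\mu_*$ then gives a genuine morphism.

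Everything here is étale local, so one can also verify it on an étale presentation $U\to\mathcal X$, where \cite[Thm.~5.10]{KM98} and the corresponding statement for varieties apply. For the ``in particular'', take $k=1$. A Gorenstein variety with $K_{\mathcal X'}=\mu^*K_{\mathcal X}+\mathcal E$ and $\mathcal E\ge0$ exceptional has discrepancies $\ge0$, so it is canonical. This holds étale locally, which is the definition for stacks.
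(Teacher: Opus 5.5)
Your proposal is correct and follows the paper's proof: the counit $\mu^*\omega_{\mathcal X}=\mu^*\mu_*\omega_{\mathcal X'}\to\omega_{\mathcal X'}$ is a nonzero map of line bundles that is an isomorphism off the exceptional locus, which gives the case $k=1$, and the remaining cases follow by taking tensor powers. You spell out what the paper leaves implicit, namely that $\mathcal E$ is the zero divisor of the induced section of $\omega_{\mathcal X'}\otimes\mu^*\omega_{\mathcal X}^{-1}$, which makes it effective and supported on the exceptional locus.
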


\begin{proof}
As explained in \eqref{E:counit-mu}, the morphism of line bundles \eqref{E:rat-tens} is an isomorphism outside of the exceptional divisor. The case with $k=1$ follows. The other cases follow by taking tensor products. 
\end{proof}

From \Cref{L:rat} we have the following corollary in the relative situation:

\begin{cor}\label{C:rel-rat}
In the situation of diagram \eqref{E:rel-tr}, assume that $\mathcal S$ is integral and normal with Gorenstein singularities,   $\mathcal Y$ is integral normal and CM, and $f$ is a resolution of singularities.  If $\mathcal Y$ has rational singularities, then the trace morphism $Rf_*\omega_{\mathcal X/\mathcal S}\to \omega_{\mathcal Y/\mathcal S}$ is an isomorphism.
\end{cor}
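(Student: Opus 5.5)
The plan is to reduce to \Cref{L:rat} by twisting with the pullback of the canonical sheaf of $\mathcal S$. Since $\mathcal S$ is Gorenstein, $\omega_{\mathcal S}$ is a line bundle. Moreover $\mathcal X$ is smooth, hence CM, and $\mathcal Y$ is CM by hypothesis. So \eqref{E:omegaX/Y-CM-G} applies to both $g$ and $h$ and gives
$$
\omega_{\mathcal X/\mathcal S}\cong \omega_{\mathcal X}\otimes g^*\omega_{\mathcal S}^{-1}
\quad\text{and}\quad
\omega_{\mathcal Y/\mathcal S}\cong \omega_{\mathcal Y}\otimes h^*\omega_{\mathcal S}^{-1}.
$$
Using $g^*\omega_{\mathcal S}^{-1}\cong f^*h^*\omega_{\mathcal S}^{-1}$, which comes from the $2$-commutativity of \eqref{E:rel-tr}, the projection formula gives
$$
Rf_*\omega_{\mathcal X/\mathcal S}\cong Rf_*\omega_{\mathcal X}\otimes h^*\omega_{\mathcal S}^{-1}.
$$
By \Cref{L:rat}, and because $\mathcal Y$ has rational singularities, the absolute trace map $Rf_*\omega_{\mathcal X}\to\omega_{\mathcal Y}$ is an isomorphism. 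Tensoring with the line bundle $h^*\omega_{\mathcal S}^{-1}$ therefore produces an isomorphism $Rf_*\omega_{\mathcal X/\mathcal S}\xrightarrow{\sim}\omega_{\mathcal Y/\mathcal S}$.

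It remains to check that this isomorphism is the relative trace map \eqref{E:rel-tr-df}, i.e., that under the identifications above the relative trace equals the absolute trace tensored with $h^*\omega_{\mathcal S}^{-1}$. This is the only step needing care. I would argue it \'etale locally, using that duality is \'etale local (\cite[Cor.~2.3]{nironi09}) and that $g^!=g^*$ for representable \'etale $g$. Choosing an \'etale atlas $S_0\to\mathcal S$ and then compatible \'etale atlases of $\mathcal Y$ and $\mathcal X$ over it reduces the compatibility to schemes. There it is the standard compatibility of traces with the isomorphisms $f^!h^!\cong g^!$ and $h^!\mathcal O_S\cong h^*\omega_S^{-1}\otimes\omega_Y^\bullet$ (cf.\ \cite{kleiman80rel-duality}).

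If the compatibility proves awkward, there is a shortcut. Whether a map of complexes is an isomorphism is \'etale local on $\mathcal Y$, and the trace is compatible with \'etale base change. So it suffices to check, after pulling back to an \'etale atlas, that the relative trace is an isomorphism. That is exactly the scheme case, where it follows from \cite[Thm.~5.10]{KM98} together with the twist by $\omega_S^{-1}$ above. I expect this bookkeeping of trace compatibilities to be the main (if routine) obstacle.
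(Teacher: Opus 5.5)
Your proposal is correct and matches the paper's proof, which is the chain $Rf_*\omega_{\mathcal X/\mathcal S}\cong Rf_*(\omega_{\mathcal X}\otimes g^*\omega_{\mathcal S}^{-1})\cong Rf_*\omega_{\mathcal X}\otimes h^*\omega_{\mathcal S}^{-1}\cong\omega_{\mathcal Y}\otimes h^*\omega_{\mathcal S}^{-1}\cong\omega_{\mathcal Y/\mathcal S}$, using \eqref{E:omegaX/Y-CM-G}, the projection formula, and \Cref{L:rat}. The paper does not check that this composite is the relative trace; your extra step covers that, and it is formal, since for invertible $L$ the counit of the adjunction $(Rf_*,f^!)$ at $N\otimes L$ is the counit at $N$ twisted by $L$, so the \'etale-local reduction is not really needed.
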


\begin{proof}
We have $Rf_*\omega_{\mathcal X/\mathcal S}= Rf_*(\omega_{\mathcal X}\otimes g^*\omega_{\mathcal S}^{-1}) =Rf_*\omega_{\mathcal X}\otimes h^*\omega_{\mathcal S}^{-1}=\omega_{\mathcal Y}\otimes h^*\omega_{\mathcal S}^{-1}=\omega_{\mathcal Y/\mathcal S}$.
\end{proof}

In the situation of \Cref{C:rel-rat}, we have an induced morphism
$$
f^*\omega_{\mathcal Y/\mathcal S}=f^*f_*\omega_{\mathcal X/\mathcal S}
\longrightarrow \omega_{\mathcal X/\mathcal S},
$$
which is an isomorphism away from the exceptional divisor.

\begin{cor}\label{C:rel-rat-tens}
In the situation of \Cref{C:rel-rat} with $\mathcal Y$ assumed to have rational singularities,  if $\mathcal Y$ is Gorenstein, then the induced morphism
\begin{equation}\label{E:rel-rat-tens}
f^*\omega_{\mathcal Y/\mathcal S}
\longrightarrow \omega_{\mathcal X/\mathcal S},
\end{equation}
induces for each $k\ge 1$ an isomorphism
$$
f^*\omega_{\mathcal Y/\mathcal S}^{\otimes k} \stackrel{\sim}{\longrightarrow} \omega_{\mathcal X/\mathcal S}^{\otimes k}(-k\mathcal E)
$$
for some effective exceptional divisor $\mathcal E$.  
\end{cor}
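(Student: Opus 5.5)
The plan is to reduce to \Cref{C:rat-tens}, in the same way that \Cref{C:rel-rat} was deduced from \Cref{L:rat}. In the setting of \Cref{C:rel-rat}, $\mathcal S$ is Gorenstein, $\mathcal Y$ is Gorenstein with rational singularities, and $\mathcal X$ is smooth, being a resolution. Hence $\omega_{\mathcal S}$, $\omega_{\mathcal Y}$ and $\omega_{\mathcal X}$ are all line bundles, and by \eqref{E:omegaX/Y-CM-G} we have
$$
\omega_{\mathcal Y/\mathcal S}\cong \omega_{\mathcal Y}\otimes h^*\omega_{\mathcal S}^{-1},\qquad
\omega_{\mathcal X/\mathcal S}\cong \omega_{\mathcal X}\otimes g^*\omega_{\mathcal S}^{-1},
$$
with $g^*\omega_{\mathcal S}^{-1}= f^*h^*\omega_{\mathcal S}^{-1}$ coming from the $2$-commutativity of \eqref{E:rel-tr}.

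The first step is to check that, under these identifications, the morphism \eqref{E:rel-rat-tens} equals the morphism \eqref{E:rat-tens} tensored with the identity of $f^*h^*\omega_{\mathcal S}^{-1}$. In the proof of \Cref{C:rel-rat}, the relative trace isomorphism was obtained from the absolute one $Rf_*\omega_{\mathcal X}\to\omega_{\mathcal Y}$ by the projection formula. Its adjoint counit morphism $f^*f_*\to \mathrm{id}$ is therefore the absolute counit twisted by the pulled-back line bundle. This compatibility is the only point that needs care. If one prefers not to track it through the derived-category formalism, one can instead observe that both maps are morphisms of line bundles on the integral stack $\mathcal X$ that agree away from the exceptional locus. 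Such a morphism is a nonzero section of $\omega_{\mathcal X/\mathcal S}\otimes f^*\omega_{\mathcal Y/\mathcal S}^{-1}$, so its zero locus is an effective Cartier divisor, and by the above it is supported on the exceptional locus.

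The second step applies \Cref{C:rat-tens}. It gives $\mu^*\omega_{\mathcal Y}^{\otimes k}\cong\omega_{\mathcal X}^{\otimes k}(-k\mathcal E)$ with $\mathcal E$ effective and exceptional, where $\mu=f$. Tensoring with $f^*h^*\omega_{\mathcal S}^{-k}$ yields
$$
f^*\omega_{\mathcal Y/\mathcal S}^{\otimes k}\cong \omega_{\mathcal X/\mathcal S}^{\otimes k}(-k\mathcal E),
$$
and this is the $k$-th tensor power of \eqref{E:rel-rat-tens}. Alternatively, one can argue directly: the case $k=1$ is the divisor of the section described in the first step, and the general case follows by taking tensor powers, as in the proof of \Cref{C:rat-tens}.

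I expect no real obstacle beyond the compatibility of the counit maps in the first step, and the divisorial argument sidesteps even that.
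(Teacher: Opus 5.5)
Your proposal is correct and is essentially the paper's proof. The paper notes that $\omega_{\mathcal Y/\mathcal S}$ and $\omega_{\mathcal X/\mathcal S}$ are line bundles, so \eqref{E:rel-rat-tens} is a morphism of line bundles that is an isomorphism away from the exceptional locus; it then argues exactly as in \Cref{C:rat-tens}, which is your ``alternative'' divisorial argument followed by taking tensor powers, and this avoids the counit-compatibility check that your primary route of twisting \Cref{C:rat-tens} by $f^*h^*\omega_{\mathcal S}^{-1}$ would need.
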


\begin{proof}
We have that $\omega_{\mathcal Y/\mathcal S}=\omega_{\mathcal Y}\otimes h^*\omega_{\mathcal S}^{-1}$ is a line bundle, and similarly for $\omega_{\mathcal X/\mathcal S}$.  
Thus, the morphism \eqref{E:rel-rat-tens} is a morphism of line bundles. Proof follows as in \Cref{C:rat-tens}
\end{proof}

\begin{cor}\label{C:rel-pluri}
In the situation of \Cref{C:rel-rat-tens}, for each $k\ge 1$, there is a canonical isomorphism
$$
g_*\omega_{\mathcal X/\mathcal S}^{\otimes k}\stackrel{\sim}{\longrightarrow} h_*\omega_{\mathcal Y/\mathcal S}^{\otimes k}.
$$
\end{cor}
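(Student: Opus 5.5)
We would deduce the statement from \Cref{C:rel-rat-tens} together with the projection formula and the fact that $f_*\mathcal O_{\mathcal X}=\mathcal O_{\mathcal Y}$ for a resolution of singularities of a normal stack. Since $g=h\circ f$, we have $g_*=h_*f_*$. It therefore suffices to produce a canonical isomorphism
$$
f_*\omega_{\mathcal X/\mathcal S}^{\otimes k}\cong \omega_{\mathcal Y/\mathcal S}^{\otimes k}
$$
on $\mathcal Y$ and then apply $h_*$.

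The construction goes as follows. \Cref{C:rel-rat-tens} gives an isomorphism $f^*\omega_{\mathcal Y/\mathcal S}^{\otimes k}\cong \omega_{\mathcal X/\mathcal S}^{\otimes k}(-k\mathcal E)$ with $\mathcal E$ effective and exceptional. This yields an injection $f^*\omega_{\mathcal Y/\mathcal S}^{\otimes k}\hookrightarrow \omega_{\mathcal X/\mathcal S}^{\otimes k}$. Pushing forward and using the projection formula (valid since $\omega_{\mathcal Y/\mathcal S}$ is a line bundle) together with $f_*\mathcal O_{\mathcal X}=\mathcal O_{\mathcal Y}$ (normality plus properness and birationality, checked \'etale locally via Zariski's main theorem), we obtain
$$
\omega_{\mathcal Y/\mathcal S}^{\otimes k}=f_*f^*\omega_{\mathcal Y/\mathcal S}^{\otimes k}\hookrightarrow f_*\omega_{\mathcal X/\mathcal S}^{\otimes k}.
$$
For surjectivity, observe that $f_*\omega_{\mathcal X/\mathcal S}^{\otimes k}$ is torsion free, and that the map above is an isomorphism away from the image of the exceptional locus. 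That image has codimension at least $2$ in $\mathcal Y$ because $\mathcal Y$ is normal.

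It follows that $f_*\omega_{\mathcal X/\mathcal S}^{\otimes k}$ sits inside $j_*$ of its restriction to the big open substack. Since $\mathcal Y$ is normal, that $j_*$ equals $j_*(\omega_{\mathcal Y/\mathcal S}^{\otimes k}|_{\mathcal U})=\omega_{\mathcal Y/\mathcal S}^{\otimes k}$, a line bundle on a normal stack. Hence the inclusion is an equality. Alternatively, one can argue directly: a section of $\omega_{\mathcal X/\mathcal S}^{\otimes k}$ over $f^{-1}(\mathcal V)$ restricts to a section of $\omega_{\mathcal Y/\mathcal S}^{\otimes k}$ away from codimension $2$, which extends by the Hartogs property.

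Canonicity means independence of choices and compatibility with \'etale base change. It follows because every map used (trace, counit, projection formula) is canonical and compatible with \'etale pullback, by the \'etale-local nature of duality \cite[Cor.~2.3]{nironi09}. Alternatively, one may verify all steps \'etale locally on $\mathcal Y$ and descend, since isomorphisms of quasi-coherent sheaves glue.

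The only mildly delicate point is the codimension-$2$ extension argument on a stack. Here I would reduce to an \'etale atlas of $\mathcal Y$, where it is the standard statement for normal varieties. Applying $h_*$ then finishes the proof.
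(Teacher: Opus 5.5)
Your proposal is correct and is essentially the paper's argument. The paper writes $g_*=h_*f_*$, rewrites $\omega_{\mathcal X/\mathcal S}^{\otimes k}=f^*\omega_{\mathcal Y/\mathcal S}^{\otimes k}\otimes\mathcal O_{\mathcal X}(k\mathcal E)$ via \Cref{C:rel-rat-tens}, and uses the projection formula together with the implicit fact $f_*\mathcal O_{\mathcal X}(k\mathcal E)=\mathcal O_{\mathcal Y}$. Your injection-plus-Hartogs argument on the normal stack $\mathcal Y$ is a proof of that implicit fact.
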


\begin{proof}
We have $g_*\omega_{\mathcal X/\mathcal S}^{\otimes k}=h_*f_*\omega_{\mathcal X/\mathcal S}^{\otimes k}=h_*f_*(f^*\omega_{\mathcal Y/\mathcal S}^{\otimes k} \otimes \mathcal O_{\mathcal X}(k\mathcal E))=h_*(\omega_{\mathcal Y/\mathcal S}^{\otimes k}\otimes \mathcal O_{\mathcal Y})$. 
\end{proof}

\subsection{Some applications of duality}
We consider applications of duality following \cite[\S 3]{Vkod1}.

\subsubsection{The setup}\label{S:Vsetup}
 The setup is as follows.  We let 
$$
f:\mathcal Y\longrightarrow \mathcal X 
$$
be a surjective projective schematic morphism between smooth integral separated DM stacks of finite type over $\mathbb C$.  We define open substacks $\mathcal X_0\subseteq \mathcal X_1\subseteq \mathcal X_2\subseteq \mathcal X$ by the following conditions on their $\mathbb C$-points:
\begin{align*}
\mathcal X_2(\mathbb C)&:=\{x\in \mathcal X(\mathbb C): f \text { is flat along } f^{-1}(x)\}\\
\mathcal X_1(\mathbb C)&:=\{x\in \mathcal X(\mathbb C): f^{-1}(x)  \text { is reduced and of dimension } \dim \mathcal Y-\dim \mathcal X\}\\
\mathcal X_0(\mathbb C)&:=\{x\in \mathcal X(\mathbb C): f^{-1}(x)  \text { is smooth}\}
\end{align*}
and $\mathcal Y_i:=f^{-1}(\mathcal X_i)$ and $f_i:=f|_{\mathcal Y_i}:\mathcal Y_i\to \mathcal X_i$.  Note that $\mathcal X_1$ is contained in $\mathcal X_2$ by the miracle of flatness, and is open in $\mathcal X_2$ by standard results (e.g., \cite[\href{https://stacks.math.columbia.edu/tag/0574}{\S 0574}]{stacks-project}).

We then let $\tau:\mathcal X'\to \mathcal X$ be a flat schematic 
projective morphism from a smooth integral separated DM stack of finite type over $\mathbb C$. Let $\mathcal Y_{\mathcal X'}:=\mathcal X'\times_{\mathcal X}\mathcal Y$ be the fibered product, and consider a commutative diagram
$$
\xymatrix{
\mathcal Y'  \ar[r]^\mu \ar[d]^{f'}& \mathcal Y_{\mathcal X'}^\nu \ar[r]^\nu\ar[d]^{f^\nu_{\mathcal X'}}& \mathcal Y_{\mathcal X'} \ar[r]^{\tau_{\mathcal Y}} \ar[d]^{f_{\mathcal X'}}& \mathcal Y \ar[d]^f\\
\mathcal X' \ar@{=}[r]&\mathcal X' \ar@{=}[r]& \mathcal X' \ar[r]^\tau &\mathcal X
}
$$
where $\nu$ is the normalization and $\mu$ is a resolution of singularities. 

\subsubsection{Pluricanonical forms in families}

With the setup in \S \ref{S:Vsetup}, the first result we will want is a generalization of \cite[Lem.~3.2]{Vkod1}:

\begin{lem}
\label{L:LV3.2}
Assume that $\mathcal Y^\nu_{\mathcal X'}$ has rational singularities.  Then for all $k\ge 1$ there is a natural inclusion 
$$
i_k: f'_*\omega_{\mathcal Y'/\mathcal X'}^{\otimes k}\longrightarrow \tau^*f_*\omega_{\mathcal Y/\mathcal X} ^{\otimes k}.
$$
Moreover, $i_k$ is an isomorphism over $\tau^{-1}(\mathcal X_1)$ and over the open substack $\mathcal U'\subseteq \mathcal X'$ where $\tau$ is smooth. 
\end{lem}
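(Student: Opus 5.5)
The plan is to follow Viehweg's argument for \cite[Lem.~3.2]{Vkod1}, using flat base change on the level of relative dualizing sheaves together with the results of \S\ref{S:DuDM} on rational singularities. Since the problem is étale local on $\mathcal X$ and $\mathcal X'$, we could reduce to schemes via étale presentations, using \cite[Cor.~2.3]{nironi09}. It is cleaner, however, to argue directly on the stacks, since all the sheaves involved are defined intrinsically.

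\emph{Step 1: flat base change.} Because $\tau$ is flat and $f$ is flat over $\mathcal X_2$, the fibered product $f_{\mathcal X'}:\mathcal Y_{\mathcal X'}\to\mathcal X'$ has relative dualizing sheaf $\omega_{\mathcal Y_{\mathcal X'}/\mathcal X'}=\tau_{\mathcal Y}^*\omega_{\mathcal Y/\mathcal X}$. This holds étale locally by compatibility of $f^!$ with flat base change, and it holds everywhere as the reflexive sheaf $\omega_{\mathcal Y_{\mathcal X'}}\otimes f_{\mathcal X'}^*\omega_{\mathcal X'}^{-1}$. Here $\mathcal Y_{\mathcal X'}$ is Gorenstein, being flat over the smooth $\mathcal Y$ with fibers those of $\tau$, at least where $\tau$ has Gorenstein fibers; in general we use the $S_2$/$G_1$ definitions \eqref{E:omegaX/Y-CM-G} and \eqref{E:RelCanRefl}. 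Flat base change for push forward then gives
\[
\tau^*f_*\omega_{\mathcal Y/\mathcal X}^{\otimes k}\cong f_{\mathcal X'*}\tau_{\mathcal Y}^*\omega_{\mathcal Y/\mathcal X}^{\otimes k}.
\]

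\emph{Step 2: passing through the normalization and the resolution.} The normalization $\nu$ gives a trace map $\nu_*\omega_{\mathcal Y^\nu_{\mathcal X'}/\mathcal X'}\to\omega_{\mathcal Y_{\mathcal X'}/\mathcal X'}$. Equivalently, $\omega_{\mathcal Y^\nu}=\mathcal Hom(\nu_*\mathcal O,\omega)\subseteq\omega$ off codimension $2$, so on pluricanonical sections there is an injection $f^\nu_*\omega^{[k]}_{\mathcal Y^\nu/\mathcal X'}\hookrightarrow f_{\mathcal X'*}\omega^{\otimes k}_{\mathcal Y_{\mathcal X'}/\mathcal X'}$ obtained by comparing the two sheaves on the dense open set where $\nu$ is an isomorphism. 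Since $\mathcal Y^\nu_{\mathcal X'}$ has rational singularities, \Cref{L:rat} and its relative form \Cref{C:rel-rat} give $\mu_*\omega_{\mathcal Y'/\mathcal X'}=\omega_{\mathcal Y^\nu/\mathcal X'}$. More generally $\mu_*\omega^{\otimes k}_{\mathcal Y'/\mathcal X'}\hookrightarrow\omega^{[k]}_{\mathcal Y^\nu/\mathcal X'}$, because both sheaves agree off the exceptional locus and the target is reflexive. Composing these maps yields $i_k$. Injectivity is automatic, since all the sheaves are torsion free and the maps are generically isomorphisms.

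\emph{Step 3: isomorphism loci.} Over $\tau^{-1}(\mathcal X_1)$ the fibers of $f$ are reduced and of the expected dimension, and $f$ is flat there. The base change is then flat with reduced fibers, so $\mathcal Y_{\mathcal X'}$ is reduced and Gorenstein there; this needs care, and I would use Serre's criterion to get normality in codimension one. The key point is that the conductor of $\nu$ does not affect pluricanonical sections. Following Viehweg, one shows that the sections of $\omega^{\otimes k}$ on the normalization, and then on the resolution, coincide with those on $\mathcal Y_{\mathcal X'}$ by checking over codimension-one points of $\mathcal X'$, where after a further reduction the statement becomes \cite[Lem.~3.2]{Vkod1} for schemes, applied on an étale chart.

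Over $\mathcal U'$, where $\tau$ is smooth, $\mathcal Y_{\mathcal X'}$ is smooth over $\mathcal Y$, hence smooth. So $\nu$ and $\mu$ can be taken to be isomorphisms up to birational modifications of smooth stacks, and $\mu_*\omega^{\otimes k}_{\mathcal Y'}=\omega^{\otimes k}$ by birational invariance of pluricanonical sections on smooth spaces. Hence $i_k$ is an isomorphism there.

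The main obstacle is the isomorphism over $\tau^{-1}(\mathcal X_1)$: it needs the fact that the pluricanonical sections on the singular fibered product extend to the resolution, which is where the rational-singularities hypothesis and Corollary~\ref{C:rel-rat-tens} enter. I would handle it by reducing to a scheme chart and citing Viehweg's argument there.
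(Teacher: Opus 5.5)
Your proposal follows essentially the same route as the paper. The ingredients match: the flat base change identification $\tau^*f_*\omega^{\otimes k}_{\mathcal Y/\mathcal X}\cong f_{\mathcal X',*}\omega^{\otimes k}_{\mathcal Y_{\mathcal X'}/\mathcal X'}$ (which needs only flatness of $\tau$, not of $f$), the inclusion of $\mu_*\omega^{\otimes k}_{\mathcal Y'/\mathcal X'}$ into its reflexive hull $\omega^{[k]}_{\mathcal Y^\nu_{\mathcal X'}/\mathcal X'}$, a trace-induced map from the normalization to $\mathcal Y_{\mathcal X'}$, and an \'etale-local appeal to \cite[Lem.~3.2]{Vkod1} for the isomorphism loci. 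The one step you state rather than prove is the normalization map for $k\ge 2$; it does not follow by simply pushing forward $\omega^{[k]}_{\mathcal Y^\nu_{\mathcal X'}/\mathcal X'}\subseteq \nu^*\omega^{\otimes k}_{\mathcal Y_{\mathcal X'}/\mathcal X'}$, and the paper obtains it by extending $1\otimes\beta^{\otimes(k-1)}$ by reflexivity (with $\beta:\omega_{\mathcal Y^\nu_{\mathcal X'}/\mathcal X'}\hookrightarrow\nu^*\omega_{\mathcal Y_{\mathcal X'}/\mathcal X'}$) and applying the trace to a single factor after the projection formula.
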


\begin{proof}
 \'Etale locally, we obtain that $\mathcal Y_{\mathcal X'}$ is Gorenstein (see \cite[p.335]{Vkod1}).  
As $\tau$ is flat, there is a canonical isomorphism $\tau^*f_*\omega_{\mathcal Y/\mathcal X}^{\otimes k}\stackrel{\sim}{\to} f_{\mathcal X',*}\tau_{\mathcal Y}^*\omega_{\mathcal Y/\mathcal X}^{\otimes k}$. There is also a canonical morphism $\tau_{\mathcal Y}^*\omega^\bullet_{\mathcal Y/\mathcal X}=\tau_{\mathcal Y}^*f^!\mathcal O_{\mathcal X}\to f_{\mathcal X'}^!\tau^*\mathcal O_{\mathcal X}= f_{\mathcal X'}^!\mathcal O_{\mathcal X'}= \omega^\bullet_{\mathcal Y_{\mathcal X'}/\mathcal X' }$ \cite[(2.4)]{nironi09}.  Considering this morphism \'etale-locally, and reducing to  the case of varieties, one can confirm with standard arguments that the composition  $\tau_{\mathcal Y}^*\omega^\bullet_{\mathcal Y/\mathcal X}\stackrel{\sim}{\to } \omega^\bullet_{\mathcal Y_{\mathcal X'}/\mathcal X'}$ is an isomorphism (e.g., \cite[p.335]{Vkod1}).  Since pull back commutes with tensor products, we have $\tau_{\mathcal Y}^*\omega_{\mathcal Y/\mathcal X}^{\otimes k}\stackrel{\sim}{\to } \omega_{\mathcal Y_{\mathcal X'}/\mathcal X'}^{\otimes k}$. 
 In summary, we have canonical isomorphisms
$$
\tau^*f_*\omega_{\mathcal Y/\mathcal X}^{\otimes k}\stackrel{\sim}{\longrightarrow} f_{\mathcal X',*}\omega_{\mathcal Y_{\mathcal X'}/\mathcal X'}^{\otimes k}.
$$

Let $j:\mathcal V\hookrightarrow\mathcal Y^\nu_{\mathcal X'}$ be the open substack on which $\mu$ is an isomorphism; the complement has codimension at least $2.$ We obtain a canonical morphism (see \eqref{E:RelCanRefl} for the reflexive hull)
\[ \alpha_k: \mu_*(\omega_{\mathcal Y'/\mathcal X'}^{\otimes k} ) \hookrightarrow ( \mu_*\omega_{\mathcal Y'/\mathcal X'}^{\otimes k})^{\vee\vee} = \omega_{\mathcal Y^\nu_{\mathcal X'}/\mathcal X'}^{[k]}.
\]
 Applying $f^\nu_{\mathcal X',*}$, we obtain an inclusion
\[ f'_*\omega_{\mathcal Y'/\mathcal X'}^{\otimes k} \hookrightarrow f^\nu_{\mathcal X',*} \omega_{\mathcal Y^\nu_{\mathcal X'}/\mathcal X'}^{[k]}. \]

The next step is to construct natural morphisms
\begin{equation}\label{E:LV3.2}
 f^\nu_{\mathcal X',*}\omega_{\mathcal Y^\nu_{\mathcal X'}/\mathcal X'}^{[k]} \longrightarrow  f_{\mathcal X',*}\omega_{\mathcal Y_{\mathcal X'}/\mathcal X'}^{\otimes k}.
\end{equation}
Once we construct these morphism in \eqref{E:LV3.2}, the composition of the morphisms above defines $i_k$:
$$
\xymatrix{
i_k: 
f'_*\omega_{\mathcal Y'/\mathcal X'}^{\otimes k}\ar@{^(->}[r] & 
f^\nu_{\mathcal X',*}\omega_{\mathcal Y^\nu_{\mathcal X'}/\mathcal X'}^{[k]} \ar[r]&  f_{\mathcal X',*}\omega_{\mathcal Y_{\mathcal X'}/\mathcal X'}^{\otimes k}\ar[r]^<>(0.5){\sim}&  \tau^*f_*\omega_{\mathcal Y/\mathcal X}^{\otimes k},
}
$$
and all of the other assertions of \Cref{L:LV3.2}  can be checked \'etale-locally, and therefore follow from \cite[Lem.~3.2]{Vkod1}.

To construct the morphism \eqref{E:LV3.2}, we start by constructing a commutative diagram
$$
\xymatrix{
\nu^*\nu_*\omega_{\mathcal Y^\nu_{\mathcal X'}/\mathcal X'} \ar[rr]^{\nu^*tr} \ar[rd]_\epsilon& &\nu^*\omega_{\mathcal Y_{\mathcal X'}/\mathcal X'}\\
& \omega_{\mathcal Y^\nu_{\mathcal X'}/\mathcal X'}  \ar@{-->}[ru]_\beta.
}
$$
In the above,  $tr: \nu_*\omega_{\mathcal Y^\nu_{\mathcal X'}/\mathcal X'} \to \omega_{\mathcal Y_{\mathcal X'}/\mathcal X'}$ is the trace \eqref{E:rel-tr-df}, where we are using that $\nu$ is finite to make the identification $R\nu_*=\nu_*$, and the morphism $\epsilon: \nu^*\nu_*\omega_{\mathcal Y^\nu_{\mathcal X'}/\mathcal X'} \to \omega_{\mathcal Y^\nu_{\mathcal X'}/\mathcal X'} $ is the co-unit of adjunction.  
 The claim is that the  pull back of the trace factors through the co-unit of adjunction, defining the dashed arrow $\beta$.
For this, observe first that the co-unit of adjunction is surjective, since $\nu$ is finite, and therefore affine.  Then,  since the normalization $\nu$ is an isomorphism away from the conductor, the above diagram consists of isomorphisms on the complement of the conductor.  In particular, the kernel of $\epsilon$ is torsion.  Since $\omega_{\mathcal Y_{\mathcal X'}/\mathcal X'}$ is a line bundle, one has that its pull back by $\nu$ is also a line bundle, and so the kernel of $\epsilon$ is contained in the kernel of $\nu^*tr$, completing the claim.  Note also that as $\omega_{\mathcal Y^\nu_{\mathcal X'}/\mathcal X'}$ is torsion-free, and $\beta$ is generically an isomorphism, we have that $\beta$ is injective.

On $\mathcal V\subseteq \mathcal Y^\nu_{\mathcal X'}$, the morphism $\beta$ gives
\[ 1\otimes\beta^{\otimes(k-1)}: \omega_{\mathcal Y^\nu_{\mathcal X'}/\mathcal X'}^{\otimes k}
|_{\mathcal V} \rightarrow (\omega_{\mathcal Y^\nu_{\mathcal X'}/\mathcal X'}
\otimes \nu^*\omega_{\mathcal Y_{\mathcal X'}/\mathcal X'}^{\otimes(k-1)}
)|_{\mathcal V}.
\]
Thus, using the reflexivity of the sheaves on the right, we obtain
\[
\beta_k:
\omega_{\mathcal Y^\nu_{\mathcal X'}/\mathcal X'}^{[k]}
\longrightarrow
\omega_{\mathcal Y^\nu_{\mathcal X'}/\mathcal X'}
\otimes
\nu^*\omega_{\mathcal Y_{\mathcal X'}/\mathcal X'}^{\otimes(k-1)}.
\]

We can then push forward $\beta_{k}$ to $\mathcal X'$, or more precisely, we do the following:
\begin{align*}
 f^\nu_{\mathcal X',*}(\omega_{\mathcal Y^\nu_{\mathcal X'}/\mathcal X'}^{[k]})
& \stackrel{ f^\nu_{\mathcal X',*}(\beta_{k})}{\longrightarrow} f^\nu_{\mathcal X',*} (\omega_{\mathcal Y^\nu_{\mathcal X'}/\mathcal X'}\otimes \nu^*\omega_{\mathcal Y_{\mathcal X'}/\mathcal X'}^{\otimes k-1}) \\
& = f_{\mathcal X',*} \nu_* (\omega_{\mathcal Y^\nu_{\mathcal X'}/\mathcal X'}\otimes \nu^*\omega_{\mathcal Y_{\mathcal X'}/\mathcal X'}^{\otimes k-1})\\
& = f_{\mathcal X',*} (\nu_* \omega_{\mathcal Y^\nu_{\mathcal X'}/\mathcal X'}\otimes \omega_{\mathcal Y_{\mathcal X'}/\mathcal X'}^{\otimes k-1})\\ 
&  \stackrel{tr\otimes 1}{\longrightarrow}
f_{\mathcal X',*} ( \omega_{\mathcal Y_{\mathcal X'}/\mathcal X'}\otimes \omega_{\mathcal Y_{\mathcal X'}/\mathcal X'}^{\otimes k-1})
 \end{align*}
This defines the morphism \eqref{E:LV3.2}. We remark that the construction of the morphism did not use the assumption that $\mathcal Y^\nu_{\mathcal X'}$ have rational singularities; that assumption enters in the statement that we have an isomorphism over $\tau^{-1}(\mathcal X_1)$.
\end{proof}

\subsubsection{The fibered product trick}
In the set-up of \S \ref{S:Vsetup}, assume  that 
 $f : \mathcal Y \to  \mathcal X$ is a  surjective schematic projective morphism between smooth separated integral DM stacks of finite type over $\mathbb C$ admitting projective coarse moduli spaces,  and $f_*\mathcal O_{\mathcal Y}=\mathcal O_{\mathcal X}$. 
Define 
$$
\mathcal Y^{s}:=\mathcal Y\times_{\mathcal X}\mathcal Y \times_{\mathcal X}\cdots \mathcal Y\times_{\mathcal X}\mathcal Y \ \ \text{($s$ times)} 
$$
and let 
$
\mu^s:\mathcal Y^{(s)}\longrightarrow \mathcal Y^s
$
be a resolution of singularities.  Let $f^s:\mathcal Y^s\to \mathcal X$ be the induced morphism, and set $f^{(s)}:=f^s\circ \mu^s:\mathcal Y^{(s)}\to \mathcal X.$
Assume (using the notation introduced
in \S \ref{S:Vsetup}) that 
$\mathcal X-\mathcal X_0$ and $\mathcal Y-\mathcal Y_0$ 
are normal crossing divisors, and
that $\operatorname{codim}( \mathcal X-\mathcal X_1)\ge 2$.

With this set-up, we can state the following generalization of \cite[Lem.~3.5]{Vkod1}:

\begin{lem}

\label{L:LV3.5}
There exists an open substack $\mathcal U\subseteq \mathcal X$ such that $\operatorname{codim}(\mathcal X-\mathcal U)\ge 2$ so that over $\mathcal U$ there is a  natural isomorphism
$$
f^{(s)}_*\omega_{\mathcal Y^{(s)}/\mathcal X}^{\otimes k}|_{\mathcal U}  \stackrel{\sim}{\longrightarrow} \bigotimes_{i=1}^sf_*\omega_{\mathcal Y/\mathcal X}^{\otimes k}|_{\mathcal U}.
$$
\end{lem}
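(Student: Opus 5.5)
We follow the proof of \cite[Lem.~3.5]{Vkod1}, arguing by induction on $s$ and using \Cref{L:LV3.2} as the key input. The case $s=1$ is trivial. Since $\operatorname{codim}(\mathcal X-\mathcal X_1)\ge 2$, it suffices to work over $\mathcal X_1$, after shrinking by a further codimension-$2$ locus. We first take $\mathcal U\subseteq \mathcal X_1$ to be the complement of the singular locus of the normal crossing divisor $\mathcal X-\mathcal X_0$, intersected with the locus where $f_*\omega_{\mathcal Y/\mathcal X}^{\otimes k}$ is locally free. Since $f_*\omega_{\mathcal Y/\mathcal X}^{\otimes k}$ is torsion free on the smooth stack $\mathcal X$, hence reflexive away from codimension $2$, this complement still has codimension at least $2$. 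Over $\mathcal U$ the morphism $f$ is flat with reduced fibers, and its boundary is a smooth divisor.

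The main step is to analyze $\mathcal Y^s$ over $\mathcal U$. Étale locally on $\mathcal U$ (and on $\mathcal Y$), the schematic morphism $f$ is a flat family of varieties. Along the boundary, $\mathcal Y-\mathcal Y_0$ is normal crossing and the fibers are reduced, so the map is locally of the form $t=y_1\cdots y_r$. The fibered product $\mathcal Y^s$ is then flat over $\mathcal U$ with reduced fibers, and it is Gorenstein, being a flat Gorenstein morphism over a smooth base. Its relative dualizing sheaf is $\omega_{\mathcal Y^s/\mathcal X}=\bigotimes_i \mathrm{pr}_i^*\omega_{\mathcal Y/\mathcal X}$, which we check étale locally from the scheme case \cite[p.335]{Vkod1}, together with the base-change compatibility of $f^!$ recalled in \S\ref{S:DuDM} \cite[Cor.~2.3]{nironi09}. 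Moreover $\mathcal Y^s$ has at worst toroidal singularities, locally given by products of $t=y_1\cdots y_r$ over a one-dimensional base; in particular it is normal with rational singularities. This is the local computation of \cite[Lem.~3.5]{Vkod1}, which transfers because every assertion is étale local on $\mathcal X$ and $\mathcal Y$.

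With this in hand, we apply \Cref{C:rel-pluri} to the resolution $\mu^s:\mathcal Y^{(s)}\to \mathcal Y^s$ over $\mathcal S=\mathcal U$. This gives $f^{(s)}_*\omega_{\mathcal Y^{(s)}/\mathcal X}^{\otimes k}\cong f^s_*\omega_{\mathcal Y^s/\mathcal X}^{\otimes k}$ over $\mathcal U$. Next, flat base change along $\mathcal Y^{s-1}\to\mathcal X$ and the projection formula identify $f^s_*(\omega_{\mathcal Y^{s-1}/\mathcal X}^{\otimes k}\boxtimes\omega_{\mathcal Y/\mathcal X}^{\otimes k})$ with $f^{s-1}_*\omega_{\mathcal Y^{s-1}/\mathcal X}^{\otimes k}\otimes f_*\omega_{\mathcal Y/\mathcal X}^{\otimes k}$. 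This uses the local freeness of $f_*\omega^{\otimes k}_{\mathcal Y/\mathcal X}$ on $\mathcal U$ and flatness of $f$. Equivalently, one can invoke \Cref{L:LV3.2} with $\tau=f^{s-1}$ restricted to $\mathcal U$, which is an isomorphism over $\tau^{-1}(\mathcal X_1)$. Induction then yields the tensor product formula, and naturality follows because each map is canonical (trace, base change, and counit), so that the étale-local isomorphisms glue.

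The main obstacle is the local claim that $\mathcal Y^s$ is normal with rational Gorenstein singularities over $\mathcal U$, together with the identification of its dualizing sheaf. We plan to handle it by reducing via an étale atlas of $\mathcal U$ to the variety statement in \cite{Vkod1}, using that normality, rationality, and the duality isomorphisms are all étale local by \Cref{L:rat} and \cite[Cor.~2.3]{nironi09}.
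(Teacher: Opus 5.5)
Your proposal is correct and follows essentially the paper's argument: restrict to $\mathcal X_1$ up to codimension $2$, obtain \'etale-locally from Viehweg that $\mathcal Y^s$ is normal and Gorenstein with rational singularities and $\omega_{\mathcal Y^s/\mathcal X}=\bigotimes_i\operatorname{pr}_i^*\omega_{\mathcal Y/\mathcal X}$, apply \Cref{C:rel-pluri}, then K\"unneth. Your inductive flat-base-change proof of the K\"unneth step, on the locus where $f_*\omega^{\otimes k}_{\mathcal Y/\mathcal X}$ is locally free, usefully sharpens the paper's one-line appeal, at the harmless cost of $\mathcal U$ depending on $k$.

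Two small fixes are needed:
\begin{enumerate}
\item Reduced fibers and normal crossings do not by themselves give the toroidal local form at every point of your $\mathcal U$. For example, $(a,b,c,e)\mapsto(t,x)=(ab,\,a+c^2-e^2b)$ is flat with reduced fibers, smooth over $t\neq 0$, and has $\{ab=0\}$ normal crossing, yet is not toroidal at the origin. So you should also discard, via generic smoothness, the codimension-$\ge 2$ image of the locus where the strata of $\mathcal Y-\mathcal Y_0$ fail to be smooth over the boundary.
\item Drop the alternative via \Cref{L:LV3.2} with $\tau=f^{s-1}$. That lemma needs a smooth source, and $\mathcal Y^{s-1}$ is in general singular, so it does not apply.
\end{enumerate}
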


\begin{proof}
We may replace $\mathcal X$  by $\mathcal X_1$ and $\mathcal Y$  by $\mathcal Y_1$ (using the notation in \S \ref{S:Vsetup}). Hence we may
assume that $f$ is flat and has reduced fibers, and that the locus in  $\mathcal Y$  where
$f$ is not smooth is of codimension at least two. Hence the
singular locus of $\mathcal Y^s$  is also of codimension at least two.  The first claim is that $\mathcal Y^s$ is normal and Gorenstein. This follows \'etale-locally from the case of varieties \cite[p.337]{Vkod1}.  Consequently, $\omega_{\mathcal Y^s/\mathcal X}= \omega_{\mathcal Y^s}\otimes f^{s,*}\omega_{\mathcal X}^{-1}$ is a line bundle, and one can check that $\omega_{\mathcal Y^s/\mathcal X}=\bigotimes_{i=1}^s\operatorname{pr}_i^*\omega_{\mathcal Y/\mathcal X}$, where $\operatorname{pr}_i$ denotes the $i$-th projection. 

If $\mathcal Y^s$ has rational singularities, then, from \Cref{C:rel-pluri}, we have a natural isomorphism:
$$
f^{(s)}_*\omega_{\mathcal Y^{(s)}/\mathcal X}^{\otimes k} \longrightarrow f^{s}_*\omega_{\mathcal Y^{s}/\mathcal X}^{\otimes k}= f^s_*(\bigotimes_{i=1}^s\operatorname{pr}_i^*\omega_{\mathcal Y/\mathcal X}^{\otimes k})=\bigotimes_{i=1}^sf_*\omega_{\mathcal Y/\mathcal X}^{\otimes k},
$$
where the last equality is the standard Kunneth type statement.
The rationality of singularities follows \'etale-locally from the case of varieties (see \cite[Lem.~3.5 and Lem.~3.6]{Vkod1}). Consequently, after possibly replacing $\mathcal X$ with an open substack with complement of codimension still at least two, one may assume that $\mathcal Y^s$ has rational singularities, completing the proof.
\end{proof}

\subsection{A further application of duality following Viehweg--Zuo}
We start with the following generalization of  \cite[Cor.~2.4]{VZ03}:

\begin{lem}

\label{L:VZ03Cor2.4}
Let $f:\mathcal Y\to \mathcal X$ be a schematic projective morphism of smooth proper integral DM stacks over $\mathbb C$, with geometrically connected fibers, such that $\mathcal X$ has a projective coarse moduli space, and $\mathbf \Delta$ is a normal crossings divisor containing the discriminant of $f$.

 Then there exists a  diagram 
\begin{equation}\label{E:VZ03Cor2.4}
\xymatrix{
Z'  \ar[d]^{g'}&  Y''\ar[r]^\rho \ar[d]^{f''} \ar[l]_{\delta}&  Z \ar[r]^\sigma \ar[d]^g&  Y' \ar[r]^{\tau'} \ar[d]^{f'}& \widetilde {\mathcal Y}\ar[r]^{\widetilde \mu} \ar[d]^{\tilde f}&\mathcal Y \ar[d]^f\\
 X'\ar@{=}[r]&  X' \ar@{=}[r]&  X'\ar@{=}[r]&  X'\ar[r]^\tau& \widetilde {\mathcal X}\ar[r]^\mu& \mathcal X
}
\end{equation}
satisfying the conditions in the mild reduction \Cref{P:VZ03S2}.  We have, moreover:

\begin{enumerate}[label=(\roman*)]

\item  \label{L:VZ03Cor2.4i} $Y'$ has rational singularities;

\item \label{L:VZ03Cor2.4ii} for all $k \ge 1$ there exist isomorphisms
$$
\xymatrix{
g'_*\omega_{ Z'/ X'}^{\otimes k} \ar[r]^<>(0.5)\sim& f''_*\omega_{ Y''/ X'}^{\otimes k} & g_*\omega_{ Z/ X'}^{\otimes k} \ar[l]_<>(0.5)\sim;
}
$$
in particular, $g_*\omega_{ Z/ X'}^{\otimes k}$ is a reflexive sheaf;

\item \label{L:VZ03Cor2.4iii} for all $k\ge 1$ there exists a canonical  inclusion
\begin{equation}\label{E:VZ03Cor2.4iii}
\xymatrix{
g_*\omega_{ Z/ X'}^{\otimes k} \ar@{^(->}[r]^\iota & \tau^*\tilde f_*\omega^{\otimes k}_{\widetilde {\mathcal Y}/\widetilde {\mathcal X}}
}
\end{equation}
that is an isomorphism outside of $\tau^{-1}\mu^{-1}(\mathbf{\Delta}_{f})$.
\item \label{L:VZ03Cor2.4iv} For all $k \geq 1$, there exists some $N_k$ and an
invertible sheaf $\lambda_{k}$ on $\widetilde{\mathcal{X}}$ with
$$
\tau^* \lambda_{k} \simeq \operatorname{det} (g_*
\omega^{\otimes k}_{ Z/ X'})^{\otimes N_{k}}.
$$

\end{enumerate}

\end{lem}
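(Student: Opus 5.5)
Take the diagram \eqref{E:VZ03Cor2.4} to be the one produced by the mild reduction \Cref{P:VZ03S2}, and follow the argument of \cite[Cor.~2.4]{VZ03} item by item. The stacky part of the diagram sits only to the right of $\tau$; everything to the left of $\tau$ lives over the smooth projective variety $X'$. So wherever possible I will reduce to statements about varieties, either over $X'$ or étale locally on $\widetilde{\mathcal X}$, and use \Cref{L:LV3.2}, \Cref{L:LV3.5} and the duality corollaries \Cref{C:rel-rat}--\Cref{C:rel-pluri} at the points where the stack really enters.

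For \ref{L:VZ03Cor2.4i}, recall that $\tau$ is finite flat and $Y'$ is the normalization of the main component of $X'\times_{\widetilde{\mathcal X}}\widetilde{\mathcal Y}$. By \ref{P:VZ03S2DNC}, $\tilde f$ is a morphism of smooth stacks that is snc over an snc divisor containing the branch locus of $\tau$. Étale locally on $\widetilde{\mathcal X}$ this is exactly the toroidal situation of \cite[Lem.~2.3]{VZ03} (compare \cite[Lem.~3.6]{Vkod1}). The normalized base change is then toroidal, hence has quotient, and therefore rational, singularities. Rationality is étale local (\Cref{L:rat}), so \ref{L:VZ03Cor2.4i} follows.

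For \ref{L:VZ03Cor2.4ii}, this is a statement purely about varieties over $X'$. By \Cref{P:VZ03S2}, $Z'$ is Gorenstein with rational singularities, so it is canonical by \Cref{C:rat-tens}. Since $\delta$ is a resolution, \Cref{C:rel-pluri} (with $\mathcal S=X'$) gives $f''_*\omega_{Y''/X'}^{\otimes k}\cong g'_*\omega_{Z'/X'}^{\otimes k}$. The isomorphism with $g_*\omega^{\otimes k}_{Z/X'}$ holds because $Y''$ and $Z$ are smooth and birational over $X'$, so their relative pluricanonical pushforwards agree. Reflexivity I would take from the mildness of $g'$, as in \cite[Lem.~2.2]{VZ03}: $g'$ is flat Gorenstein with reduced fibres over codimension one points, so $g'_*\omega^{\otimes k}_{Z'/X'}$ is reflexive.

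For \ref{L:VZ03Cor2.4iii}, I apply \Cref{L:LV3.2} to $\tilde f$ and the flat morphism $\tau$, taking $\mathcal Y^\nu_{\mathcal X'}=Y'$, which has rational singularities by \ref{L:VZ03Cor2.4i}, and $Z$ as the resolution. This gives the inclusion $\iota$, and it is an isomorphism over $\tau^{-1}$ of the locus where $\tilde f$ has reduced fibres. That locus contains the complement of $\mu^{-1}(\mathbf\Delta_f)$, and over it $\tau$ is étale anyway. The point needing care is that $\mathcal X'=X'$ here is a scheme while $\widetilde{\mathcal X}$ is a stack; the lemma was set up for exactly this situation.

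Item \ref{L:VZ03Cor2.4iv} is the main obstacle, because a line bundle must descend along a finite cover to a \emph{stack}. The plan is to use the Galois closure. Replace $X'$ by a further finite flat cover $X''\to X'$ such that $X''\to\widetilde{\mathcal X}$ is generically Galois with group $G$, chosen via \Cref{C:KawCovTrick} and \cite[Thm.~1]{KV04} so the same properties hold. Mild morphisms and the sheaves $g_*\omega^{\otimes k}$ are compatible with such base change by \cite[Lem.~2.2]{VZ03} and \ref{L:VZ03Cor2.4ii}. By \ref{L:VZ03Cor2.4ii} the sheaf $g_*\omega^{\otimes k}_{Z/X'}$ is reflexive and independent of birational models. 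On the pullback to the Galois cover it therefore carries a natural $G$-linearization, coming from the $G$-action on the fibre product and on a mild model; on the étale locus this action is the descent datum of $\tilde f_*\omega^{\otimes k}$. Taking $\det$ and a power $N_k$ kills the characters of the stabilizers along the ramification divisors. Descent along the finite flat cover, done étale locally on $\widetilde{\mathcal X}$ where it is the scheme case of \cite[Cor.~2.4]{VZ03}, then yields $\lambda_k$. Finally $\lambda_k$ pulls back to the determinant on $X'$ itself by restricting from $X''$, using reflexivity and the fact that the determinant is determined in codimension one.
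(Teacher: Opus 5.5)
Your treatment of (i)--(iii) matches the paper's: an étale-local reduction to \cite{VZ03} for (i), \Cref{C:rel-pluri} together with \cite[Lem.~2.2(iii)]{VZ03} for (ii), and \Cref{L:LV3.2} for (iii). One slip in (iii): $\tau$ need not be étale off $\tau^{-1}\mu^{-1}(\mathbf\Delta_f)$. This is harmless, since the isomorphism there comes from the reduced-fibre locus, as you also say.

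\medskip

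The gap is in (iv), where three steps fail as written.

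First, \Cref{C:KawCovTrick} and \cite[Thm.~1]{KV04} cannot produce a smooth $X''$ that is finite over $X'$ and generically Galois over $\widetilde{\mathcal X}$. On a stack the covering trick only yields a root stack; the paper uses root stacks precisely because the ample divisors needed to extract roots are not available. The finite flat cover of that stack by a variety supplied by \cite{KV04} is arbitrary, so the Galois property is lost. Taking a Galois closure again brings back singularities.

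Second, ``descent done étale locally on $\widetilde{\mathcal X}$ via the scheme case of \cite[Cor.~2.4]{VZ03}'' does not produce a global $\lambda_k$. That result only asserts that some line bundle exists, with no canonical isomorphism, so the local outputs cannot be glued; on charts with trivial Picard group it carries no information at all. You would need a canonical descent, for instance Kempf descent of the $G$-linearized bundle through $[X''/G]\to\widetilde{\mathcal X}$, and you do not set this up.

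Third, you cannot ``restrict from $X''$'' to $X'$, because $h\colon X''\to X'$ is a finite cover. An isomorphism $h^*\tau^*\lambda_k\cong h^*\det(g_*\omega^{\otimes k}_{Z/X'})^{\otimes N_k}$ descends only up to the torsion kernel of $h^*$ on Picard groups. A norm argument and a larger $N_k$ are needed.

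\medskip

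The missing idea is the reformulation the paper's proof rests on. Let $B_k$ be the zero divisor of $\det(\iota)$. Then
\[
\det(g_*\omega^{\otimes k}_{Z/X'})\otimes\mathcal O_{X'}(B_k)\cong\tau^*\det(\tilde f_*\omega^{\otimes k}_{\widetilde{\mathcal Y}/\widetilde{\mathcal X}}),
\]
so (iv) follows once $B_k$ is $\tau^*$ of a $\mathbb Q$-divisor on $\widetilde{\mathcal X}$. Concretely, for each prime divisor $\mathcal D\subseteq\widetilde{\mathcal X}$, the ratio of multiplicity in $B_k$ to ramification index must be the same for all primes of $X'$ over $\mathcal D$. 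This is a purely local, codimension-one statement, and divisors on the smooth stack are Cartier, so no line bundle ever has to be glued. The paper verifies it on general test curves: it proves base change for $g'_*\omega^{\otimes k}$ via Tor-vanishing and Koll\'ar's condition, then uses semistable reduction after a totally ramified cover.

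Your Galois idea also proves this statement once recast this way, and it needs no smoothness:
\begin{enumerate}
\item Let $W$ be the normalization of the Galois closure of $X'\to\widetilde{\mathcal X}$, with group $G$.
\item At codimension-one points of $W$, the map $h\colon W\to X'$ is flat. Mildness of $g'$ together with \Cref{C:rel-pluri} then gives $h^*g'_*\omega^{\otimes k}_{Z'/X'}\cong g_{W*}\omega^{\otimes k}_{Z_W/W}$.
\item Hence $h^*B_k$ is the divisor of the canonical, and therefore $G$-equivariant, inclusion $g_{W*}\omega^{\otimes k}_{Z_W/W}\hookrightarrow\tau_W^*\tilde f_*\omega^{\otimes k}_{\widetilde{\mathcal Y}/\widetilde{\mathcal X}}$, so it is $G$-invariant.
\item $G$ acts transitively on the primes of $W$ over $\mathcal D$, so the ratio is constant over $\mathcal D$ on $W$.
\item Multiplicativity of ramification indices carries this constancy down to $X'$.
\end{enumerate}
This repaired route replaces the curve and semistable-reduction steps with a symmetry argument, but as written your (iv) is not proved.
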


\begin{proof}

\ref{L:VZ03Cor2.4i} This follows \'etale-locally from the case of varieties (see \cite[Cor.~2.4]{VZ03}).  

\ref{L:VZ03Cor2.4ii} This follows from the fact that $ Z'$ and $ Z$ have rational singularities (in fact $ Z$ is smooth), and  \Cref{C:rel-pluri}, showing that the sheaves on the left and right in \ref{L:VZ03Cor2.4ii} are isomorphic to the sheaf in the middle.  The reflexivity of all the sheaves then follows from the fact that $g'_*\omega_{ Z'/ X'}^{\otimes k}$ is reflexive, via \cite[Lem. 2.2(iii)]{VZ03}.  

\ref{L:VZ03Cor2.4iii} Both of the statements follow from \Cref{L:LV3.2}. For the second one note that $\X_1$ in \ref{S:Vsetup} contains the locus over which $f$ is smooth. 

\ref{L:VZ03Cor2.4iv} This follows the same argument as in \cite[Cor.2.4(ix)]{VZ03}. We include the argument here for convenience, and expand on the presentation there to make it clear that the arguments there hold in the situation here of DM stacks. 
To begin, let $B_{k}:=\det (\iota)$ on $X'$ denote the (effective) zero divisor of the determinant of the inclusion \eqref{E:VZ03Cor2.4iii}, so that 
$$
\det (g'_* \omega^{\otimes k}_{ Z'/ X'} ) \otimes \mathcal O_{ X'} ( B_{k}) =
\tau^* \det (\tilde{f}_* \omega^{\otimes k}_{\widetilde{\mathcal{Y}}/\widetilde{\mathcal{X}}});
$$
note that from \ref{L:VZ03Cor2.4iii}, the support of $B_k$ is contained in the support of $\tau^{-1}\mu^{-1}(\mathbf \Delta_f)$.  
To prove \ref{L:VZ03Cor2.4iv}, it suffices to show  that for all $k$ there is some sufficiently high multiple of $B_k$  that is the pull back via $\tau$ of a $\mathbb Q$-divisor $\widetilde {\mathcal B}_k$ on $\widetilde{\mathcal{X}}$.  
The basic strategy is to reduce to computing multiplicities after restriction to test curves $C\to \widetilde {\mathcal X}$; the details are somewhat lengthy, and so we break the argument into several steps.

\subsubsection*{Step 1: Reduction to  testing multiplicities of divisors over curves}
In order to show that a multiple of $B_{k}$ is the pull back via $\tau$ of a $\Q$-divisor $\widetilde {\mathcal B}_k$ on $\widetilde{\mathcal{X}}$, and thereby complete the proof, it suffices to show that if two irreducible components $B'$ and $B''$ of $B_k$ map to the same divisor $\widetilde {\mathcal B}$  in $\widetilde {\mathcal X}$, then the multiplicities of the two components $B'$ and $B''$ in $B_k$, give the same multiple of  $\tau^{-1}\widetilde {\mathcal B}$ on those components.
To see that this suffices, note  that $\widetilde{\mathcal X}$ is a smooth DM stack and ${X}'$ is a scheme; identifying  
$\operatorname{CH}^1(\widetilde {\mathcal{X}})_{\mathbb{Q}}=\operatorname{CH}^1(\widetilde X)_{\mathbb{Q}}$, one is then reduced to the case of normal $\mathbb Q$-factorial varieties, where this is standard.

We would like to further reduce to checking this condition over curves.    To this end, it is clear that it suffices to check the condition in the previous paragraph,  regarding  multiplicities of the components $B'$ and $B''$, at the generic points of those divisors.  Moreover, if  $C\to \widetilde {\mathcal X}$ is a general morphism from a smooth quasi-projective  curve $C$,  whose image in $\widetilde{\X}$ intersects
$\widetilde{\mathcal{B}}$ in exactly one point, which is a  general point $\tilde x$ of $\widetilde {\mathcal B}$, then it suffices to show that  the multiplicities of the two restrictions  
\begin{equation}\label{E:VZ03Cor2.4S1}
B'|_{C}:=C \times _{\widetilde {\mathcal X}}B' \quad \text{and} \quad B''|_{C}:=C\times _{\widetilde {\mathcal X}}B''
\end{equation}
   are the same at the generic points.  
In the following steps we will show this.

\subsubsection*{Step 2: Reduction of diagram \eqref{E:VZ03Cor2.4} to a general curve $C\to \widetilde {\mathcal X}$}
In order to complete \emph{Step 1}, we will reduce the question to  families over curves, by taking base changes  over sufficiently general morphisms $C\to \widetilde {\mathcal X}$ from smooth quasi-projective curves.   In order to do this, we will need to inductively use \Cref{L:VZ03Cor2.4}\ref{L:VZ03Cor2.4i}--\ref{L:VZ03Cor2.4iii}, and in particular, we will need to show that for sufficiently general morphisms   $C\to \widetilde {\mathcal X}$ from smooth quasi-projective curves $C$, we can obtain a version of diagram  \eqref{E:VZ03Cor2.4}  over   $C$ satisfying  the conditions in \Cref{P:VZ03S2}.  

More precisely, we claim the following.  Fix a  component
$\widetilde{\mathcal{B}}$ of $\mu^{-1}(\mathbf{\Delta}_{{f}})$,  let $\tilde x$ be a general point of 
$\widetilde{\mathcal{B}}$.
For a general morphism $C\to \widetilde {\mathcal X}$ from a smooth  quasi-projective   curve,  whose image in $\widetilde{\X}$ intersects
$\widetilde{\mathcal{B}}$ in a general point $\tilde x$,   
  we have  a diagram 
\begin{equation}\label{E:VZ03Cor2.4C}
\xymatrix{
Z'_{C'}  \ar[d]^{g'_{C'}}&  Y''_{C'}\ar[r]^{\rho_{C'}} \ar[d]^{f''_{C'}} \ar[l]_{\delta_{C'}}&  Z_{C'} \ar[r]^{\sigma_{C'}} \ar[d]^{g_{C'}}&  Y'_{C'} \ar[r]^{\tau'_{C}} \ar[d]^{f'_{C'}}& \widetilde { Y}_C  \ar[d]^{\tilde f_C}\\
 C'\ar@{=}[r]&  C' \ar@{=}[r]&  C'\ar@{=}[r]&  C'\ar[r]^{\tau_C}&C 
}
\end{equation}
satisfying the conditions in \Cref{P:VZ03S2}, as well as the additional conditions  that the base change $C':=C\times_{\widetilde {\mathcal X}} X'$ is a smooth quasi-projective curve, $\tau_C:C'\to C$ is the base change of $\tau$, and the spaces $\widetilde Y_C:=C\times_{\widetilde {\mathcal X}}\widetilde {\mathcal Y}$ and $Z':=C'\times_{X'}Z'$ are obtained by base change.  For the other spaces, we take $Y'_{C'}$ to be the normalization of the main component of $C'\times_C\widetilde {Y}_C$, we take $\sigma_{C'}:Z_{C'}\to Y'_{C'}$ to be a strong resolution of singularities, and we take $Y''_{C'}$, and $\delta_{C'}$ and $\rho_{C'}$ to be a smooth resolution of the birational morphism $Z'_{C'}\dashrightarrow Z_C$.   The other morphisms in the diagram are defined in the obvious way. 

 We now explain how to establish the claim.  
 Take a finite flat cover  $V\to \widetilde{\X}$ from a smooth projective variety $V$, 
 which is \'etale over our chosen point $\tilde x$ (see \S \ref{S:DM-intro}).  Since we are only interested in the end in a quasi-projective curve, let us replace $V$ with the quasi-projective variety over which $V\to \widetilde {\mathcal X}$ is \'etale.  Consider the fibered product $\widetilde{{ Y}}_{V} $, which is smooth, as $V\to \widetilde {\mathcal X}$ is \'etale.   Pick $H$ an effective very ample divisor on $\widetilde X$,  the projective coarse moduli space of $\widetilde{\mathcal X}$.  Let $H_1$ be a general member of the linear system on $V$ obtained by the pull back of $H$;  by the base-point free Bertini theorem in characteristic $0$, it follows that $H_1$ is smooth.  For the same reason, the fibered product $\widetilde Y_{H_1}:= \widetilde Y_V|_{H_1}$ will be smooth.  Iterating, we obtain a morphism $C\to \widetilde {\mathcal X}$ from a quasi-projective curve, such that $\widetilde Y_C:=C\times_{\widetilde {\mathcal X}} \widetilde {\mathcal Y}$ is smooth.  

Next, while $\tau:X'\to \widetilde {\mathcal X}$ may be branched over $\widetilde {\mathcal B}$, nevertheless, by taking the hyperplanes general above, we can assume that the base change $\tau_{C}:C'=C\times_{ \widetilde {\mathcal X}} X'\to C$ is a cover of smooth quasi-projective curves.  The base change $Z'_{C'}=C'\times_X'Z'$ is mild by \cite[Def.~2.1]{VZ03}. 
   The rest of the morphisms in \Cref{P:VZ03S2} are obtained as described here, completing the claim.

\subsubsection*{Step 3: Reduction to showing $\det(\iota_C)$ is the pull back of a divisor on $C$}

We now apply  \Cref{L:VZ03Cor2.4}\ref{L:VZ03Cor2.4iii} with
$\widetilde{\mathcal{X}}$ replaced by $C$.  In fact, as we are working now in the case of varieties, we are free to employ \cite[Cor.~2.4]{VZ03}, and we note here that, as in \cite[Cor.~2.4(ix)]{VZ03},  the conclusions hold even though $C$ is only assumed to be quasi-projective.   In particular, one obtains a natural inclusion
\begin{equation}\label{incl2}
\iota_C: g'_{C'*} \omega^{\otimes k}_{{Z}'_{C'}/C'} \rightarrow
\tau_C^* (\tilde{f}_{C*} \omega^{\otimes k}_{\widetilde{{Y}}_C/C}).
\end{equation}
Hence, we have a commutative diagram
\begin{equation}\label{D:incl2}
\begin{tikzcd}
g'_{C'*} \omega^{\otimes k}_{{Z}'_{C'}/C'} \arrow[r, "\iota_C", hook]                           & \tau_C^* (\tilde{f}_{C*} \omega^{\otimes k}_{\widetilde{{Y}}_C/C})                                                                                                                            \\
g'_{*} \omega^{\otimes k}_{{Z}'/{X}'} |_{C'} \arrow[u, "\alpha"] \arrow[r, "\iota|_{C'}", hook] & (\tau^* (\tilde{f}_{*} \omega^{\otimes k}_{\widetilde{\mathcal{Y}}/\mathcal{X}}))|_{C'}= \tau_C^* (\tilde{f}_{*} \omega^{\otimes k}_{\widetilde{\mathcal{Y}}/\mathcal{X}}|_C) \arrow[u, "\beta"]
\end{tikzcd}
\end{equation}
where $\alpha$ and $\beta$ are induced by the canonical morphisms from the pull back of the push forward to the push forward of the pull back. 

We claim that  up to a divisor pulled back by $\tau_C$, the zero divisor of $\det(\iota_C)$ on $C'$ is equal $\det(\iota )|_{C'}=B_k|_{C'}$,  so that 
 in order to complete the proof of \ref{L:VZ03Cor2.4iv}, we will then just have to verify
that $\det(\iota_C)$ is the pull back of a $\Q$-divisor on $C$.
To prove the claim we will show that $\alpha$ is an isomorphism. This suffices, since $\beta$ is the pull back of some morphism under $\tau_C$. As such, the difference in determinants for $\beta$ is the pull back of some divisor by $\tau_C$; 
thus, up to a divisor pulled back by $\tau_C$, the zero divisor of $\det(\iota_C)$ on $C'$ is $\det(\iota)|_{C'}$.

We now explain why  $\alpha$ is an isomorphism, provided we chose the divisors  $H_i\subseteq V$ in the construction of $C$ in \emph{Step 2} to be
very general.  In other words, we will show $\alpha$ is an isomorphism  inductively on restriction to the $H_i$.  To this end, by picking $H=H_1$ very generally we can ensure that $\mathcal{T}or_1^{\mathcal O_{X'}}(R^1g'_*\omega^{\otimes m}_{Z'/X'},\mathcal{O}_{H})=0$, because to achieve this, one only has to make sure that $H$ does not contain any of the associated primes of the countably many sheaves $\big(R^1g'_*\omega^{\otimes m}_{Z'/X'}\big)$ for $m\ge 1$. Denote by $Z'_H:=g'^{-1}(H)$.
On the one hand, we have the short exact sequence 
 $$
\xymatrix{
0\ar[r] & g'_*(\omega^{\otimes m}_{Z'/X'})\otimes_{\mathcal O_{X'}}\mathcal{O}_{X'}(-H) \ar[r]^<>(0.5){\cdot h} & g'_*\omega^{\otimes m}_{Z'/X'} \arrow[r] & g'_*(\omega^{\otimes m}_{Z'/X'})\otimes_{\mathcal O_{X'}}\mathcal{O}_{H} \ar[r]&0
}
$$
obtained by tensoring the standard exact sequence for $H$ by $g'_*(\omega^{\otimes m}_{Z'/X'})\otimes_{\mathcal O_{X'}}(-)$.  We obtain the left exactness as follows.  By virtue of the fact that $g'_*(\omega^{\otimes m}_{Z'/X'})$ is reflexive \cite[Lem. 2.2(iii)]{VZ03}, and therefore locally free outside of codimension $2$, the sequence is left exact outside of codimension $2$.  In particular, the kernel of the multiplication map $\cdot h$ is torsion, and being a subsheaf of the torsion-free sheaf $g'_*(\omega^{\otimes m}_{Z'/X'})\otimes_{\mathcal O_{X'}}\mathcal{O}_{X'}(-H)$, the kernel is trivial. 

 On the other hand,  using that $g'$ is flat, we obtain by pull back  a short exact sequence   $$0 \ra g'^*\mathcal{O}_{X'}(-H)\ra g'^*\mathcal{O}_{X'} \ra g'^*\mathcal{O}_{H}= \mathcal O_{Z'_{H}}\ra 0,$$
 and arguing as above, we obtain a short exact sequence 
 $$
\xymatrix{
0  \ar[r] & \omega^{\otimes m}_{Z'/X'}\otimes g'^*\mathcal{O}_{X'}(-H) \ar[r]^<>(0.5){} & \omega^{\otimes m}_{Z'/X'} \ar[r] & \omega^{\otimes m}_{Z'/X'}\otimes\mathcal{O}_{Z'_H} \ar[r]& 0
}
$$
so that pushing forward, and using the projection formula, we obtain 
the  long exact sequence 
\[
\xymatrix{
0  \ar[r] & g'_*(\omega^{\otimes m}_{Z'/X'}\otimes_{\mathcal O_{Z'}} g'^*\mathcal{O}_{X'}(-H)) \ar[r]^<>(0.5){\cdot h} & g'_*\omega^{\otimes m}_{Z'/X'} \ar[r] & g'_*(\omega^{\otimes m}_{Z'/X'}\otimes_{\mathcal O_{Z'}}\mathcal{O}_{Z'_H}) \ar[lld] \\
             & (R^1g'_*\omega^{\otimes m}_{Z'/X'})\otimes_{\mathcal O_{X'}} \mathcal{O}_{X'}(-H) \ar[r]^<>(0.5){\cdot h}  & R^1g'_*\omega^{\otimes m}_{Z'/X'}       \ar[r] &      R^1g'_*(\omega^{\otimes m}_{Z'/X'}\otimes_{\mathcal O_{Z'}}\mathcal{O}_{Z'_H})                                                                   
}
\]
Thus, since  $\mathcal{T}or_1^{\mathcal O_{X'}}(R^1g'_*\omega^{\otimes m}_{Z'/X'},\mathcal{O}_{H})=0$, 
the kernel of the multiplication map in the second row, $(R^1g'_*\omega^{\otimes m}_{Z'/X'})\otimes_{\mathcal O_{X'}} \mathcal{O}_{X'}(-H) \stackrel{\cdot h} \to R^1g'_*\omega^{\otimes m}_{Z'/X'}$, is trivial, and so
we obtain exactness of
 $$
\xymatrix{
0\ar[r] & g'_*(\omega^{\otimes [m]}_{Z'/X'})\otimes_{\mathcal O_{X'}}\mathcal{O}_{X'}(-H) \ar[r]^<>(0.5){\cdot h} & g'_*\omega^{\otimes m}_{Z'/X'} \arrow[r] &g'_*(\omega^{\otimes m}_{Z'/X'}\otimes_{\mathcal O_{Z'}}\mathcal{O}_{Z'_H}) \ar[r]&0
}
$$
implying that 
\[g'_*(\omega^{\otimes m}_{Z'/X'})\otimes_{\mathcal O_{X'}}\mathcal{O}_{H}\cong  g'_*(\omega^{\otimes m}_{Z'/X'}\otimes\mathcal{O}_{Z'_H}).\]

To complete the proof of the claim,  we need to check Koll\'ar's condition, i.e.,
\begin{equation} \label{check-Kollar-condition}
\omega^{\otimes m}_{Z'/X'}\otimes\mathcal{O}_{Z'_H}\cong \omega^{\otimes m}_{Z'_H/H}.
\end{equation}
Recall from \Cref{P:VZ03S2} that $Z'$ is Gorenstein.
 For $m=1$, as the morphism $g'$, being mild, is flat and projective with CM fibers, then by \cite[2.68.2]{kollar_families}, \eqref{check-Kollar-condition} is an isomorphism. Consequently, for higher $m$, as $Z'$ is Gorenstein and mild morphisms are flat projective and the base is smooth (in particular Gorenstein), $\omega_{Z'/X'}$ is a line bundle,  and so one has that  \eqref{check-Kollar-condition} is an isomorphism from the case $m=1$. Inductively, we have shown that $\alpha$ is an isomorphism.

\subsubsection*{Step 4: Showing $\det(\iota_C)$ is the pull back of a divisor on $C$} 
 In order to show \ref{L:VZ03Cor2.4iv}, we just have to verify
that $\det (\iota_C)$ is the pull back of a $\Q$-divisor on $C$.

By \cite{KKMS} there exists a finite morphism $\kappa: C'' \to C$, totally ramified over $x\in C$ (and \'etale elsewhere; note that the curves are not assumed to be proper), such that 
$\widetilde{{Y}}_{C''}:= \widetilde{{Y}}_C \times_{C} C''$ has a semi-stable model $S \to C''$. Define $C'''$ to be the normalization of the main component of $C''\times_CC'$, define $S':=C'''\times_{C''}S$, define $Z'_{C'''}:=C'''\times_C'Z_{C'}$, and let $\widetilde Z$ be a resolution of singularities, giving the diagram below with the various morphisms among the spaces as marked: 
\[
\xymatrix@C=2em@R=1em{
& \widetilde {Z} \ar[ld]_{\eta_1} \ar[rd]^{\eta_2} & & & & & & \\
{S}' \ar[ddd]_{\tau_S} \ar[rrrd]_{pr_1}
& & Z'_{C'''} \ar[rd]^{g'_{C'''}} \ar[rr] \ar@{<-->}[ll]
& & Z'_{C'} \ar[rd]^{g'_{C'}} \ar[rr]
& & Z' \ar[rd]^{g'} & \\
& & & C''' \ar[rr]_<>(0.25){\kappa'} \ar[ddd]_{\tau_{C''}}
& \ar[u]& C'\ar[ddd]_{\tau_C} \ar[rr]
&\ar[u] & X' \ar[ddd]_{\tau} \\
& & 
&& Y''_{C'}  \ar[ru]\ar[d] \ar@{-}[u] \ar@{-}[r]
& \ar[r]& Y''  \ar[ru]\ar[d] \ar@{-}[u] & \\
S \ar[rrrd]_{p}
& & \widetilde{Y}_{C''} \ar[rd]^{\tilde f_{C''}} \ar@{-}[r] \ar@{<-->}[ll]
& \ar[r]& \widetilde{Y}_C \ar[rd]^{\tilde f_C} \ar@{-}[r]
& \ar[r]& \widetilde{\mathcal Y} \ar[rd]^{\tilde f} & \\
& & & C'' \ar[rr]^{\kappa}
& & C \ar[rr]
& & \widetilde{\mathcal X}
}
\]
By virtue of \cite[Def.~2.1(d)]{VZ03} 
and $Z'_{C'''}$ being the pull back of ${Z}'_C$ to some non-singular covering of $C$,  we have that $Z'_{C'''}$  is normal with rational Gorenstein singularities. 
The morphisms 
$pr_1 : S' \to C'''$ and $g'_{C'''}:{Z}'_{C'''}  \to C'''$ 
are  flat Gorenstein morphisms,  with $S'$ and $Z'_{C''''}$ birational, and $S'$ is also  normal and Gorenstein with at most rational singularities.

Using the same arguments as for \ref{L:VZ03Cor2.4ii}, one has an isomorphism  ${g'_{C'''}}_* \omega^{\otimes k}_{{Z}'_{C'''}/C'''} \cong {pr_{1}}_{*} \omega^{\otimes k}_{{S}'/C'''}$ (both are isomorphic to the push forward of $\omega_{\widetilde Z/C'''}^{\otimes k}$).  
 From this, one then has isomorphisms 
$${g'_{C'''}}_* \omega^{\otimes k}_{{Z}'_{C'''}/C'''} \cong {pr_{1}}_{*} \omega^{\otimes k}_{{S}'/C'''}\cong {pr_{1}}_{*} \tau_{{S}}^*\omega^{\otimes k}_{{S}/C''}\cong \tau_{C''}^* p_*\omega^{\otimes k}_{{S}/C''}, $$
where  for the second isomorphism we use the same argument as for \eqref{check-Kollar-condition} using \cite[2.68.2]{kollar_families}, and the last isomorphism uses that $\tau_{C''}$ is flat (being a surjective morphism of smooth curves).   Note that for the same reason we have $\kappa'^*g'_{C'*} \omega^{\otimes k}_{{Z}'_{C'}/C'} \cong g'_{C'''*} \omega^{\otimes k}_{{Z}'_{C'''}/C'''}$.
Consequently, applying $\kappa'^*$ to \eqref{incl2}, we obtain 
$$
\kappa'^*(\iota_C): \kappa'^*g'_{C'*} \omega^{\otimes k}_{{Z}'_{C'}/C'} = \tau_{C''}^* p_*\omega^{\otimes k}_{{S}/C''} \longrightarrow
\kappa'^*\tau_C^* (\tilde{f}_{C*} \omega^{\otimes k}_{\widetilde{{Y}}_C/C})= \tau_{C''}^*\kappa^*(\tilde{f}_{C*} \omega^{\otimes k}_{\widetilde{{Y}}_C/C});
$$
in particular, as both the source and target of $\kappa'^*(\iota_C)$ are obtained by pull back by $\tau_{C''}$, we have that   $\kappa'^*\det(\iota _C)=\det \kappa'^*(\iota_C)$ is pulled back from $C''$ via $\tau_{C''}$. 

Finally, since $\kappa$ is totally ramified at $x$, we can conclude from this that $\det (\iota_C)$ is pulled back from $C$ via $\tau_C$, completing the proof. 
\end{proof}

In the notation of \Cref{L:VZ03Cor2.4}, let $\mathcal Y^{(r)}$ denote a strong log resolution of singularities of the main component of the $r$-fold fibered product $\mathcal Y^r:=\mathcal Y\times_{\mathcal X}\cdots \times_{\mathcal X}\mathcal Y$, and let $f^{(r)}: \mathcal Y^{(r)}\to \mathcal X$ be the induced morphism. 
 From \cite[Lem. 2.2(ii)]{VZ03}, we have that the $r$-fold fibered product $g'^r: Z'^r :=  Z'\times_{ X'}\cdots \times_{ X'} Z' \to  X'$ is mild.  
 Define   
 \begin{equation}\label{E:tildf(r)-def}
 \tilde f^{(r)}: \widetilde {\mathcal Y}^{(r)}\to \widetilde {\mathcal X}
 \end{equation}
  to be a strong log resolution of singularities of the main component of $\widetilde {\mathcal X}\times _{\mathcal X} \mathcal Y^{(r)}$.
 For the normalization $ Y'^{(r)}$ of the main component of  the fibered product $ X'\times_{\widetilde {\mathcal X}}\widetilde {\mathcal Y}^{(r)}$, we choose strong resolution of singularities  $ Z^{(r)}$, and then we choose a strong log resolution of singularities of  $ Y''^{(r)}$,    resolving  the birational map between $ Z^{(r)}$ and $ Z'^{(r)}$, as well as the pull backs of the discriminants (i.e., the pull backs of the discriminants have normal crossings support).  

With this set-up, we provide the following generalization of \cite[Clm.~4.2]{VZ03}:

\begin{lem}

\label{L:VZ03Clm4.2}
In the notation of \Cref{L:VZ03Cor2.4} and the paragraph above (where we emphasize that $Z'^r$ is the self-fibered product over $X'$, not the product), there is an integer $r>0$ and a commutative diagram
\begin{equation}\label{E:VZS2diag-2}
\xymatrix{
 {Z'}^r  \ar[d]^{{g'}^r}&  {Y''}^{(r)}\ar[r]^{\rho^{(r)}} \ar[d]^{{f''}^{(r)}} \ar[l]_{\delta^{(r)}}&  Z^{(r)} \ar[r]^{\sigma^{(r)}} \ar[d]^{g^{(r)}}&  {Y'}^{(r)} \ar[r]^{{\tau'}^{(r)}} \ar[d]^{{f'}^{(r)}}& \widetilde {\mathcal Y}^{(r)}\ar[r]^{\widetilde \mu^{(r)}} \ar[d]^{\tilde f^{(r)}}&\mathcal Y^{(r)} \ar[d]^{f^{(r)}}\\
 X'\ar@{=}[r]&  X' \ar@{=}[r]&  X'\ar@{=}[r]&  X'\ar[r]^\tau& \widetilde {\mathcal X}\ar[r]^\mu& \mathcal X
}
\end{equation}
satisfying the conditions of \eqref{E:VZS2diag} in \Cref{P:VZ03S2} and such that: 
\begin{enumerate}[label=(\alph*)]
\item \label{L:VZ03Clm4.2a} For all $k\ge 1$ the sheaf $g_*^{(r)}\omega^{\otimes k}_{ Z^{(r)}/X'}$ is reflexive, and there is an isomorphism
$$
\xymatrix{
g_*^{(r)}\omega^{\otimes k}_{ Z^{(r)}/ X'} \ar[r]^<>(0.5){\sim}& \displaystyle \hat \bigotimes^rg_*\omega_{ Z/ X'}^{\otimes k}.
}
$$

\item \label{L:VZ03Clm4.2b} For all $k\ge 1$ there  is an inclusion
$$
\xymatrix{
g_*^{(r)}\omega^{\otimes k}_{ Z^{(r)}/ X'} \ar@{^(->}[r]& \tau^*\tilde f^{(r)}_*\omega_{\widetilde {\mathcal Y}^{(r)}/\widetilde {\mathcal X}}^{\otimes k}
}
$$
which is an isomorphism on $\tau^{-1}\mu^{-1}(\mathcal U)$ ($\mathcal U$ is the complement of the discriminant).

\item \label{L:VZ03Clm4.24.1.1} For all $k\ge 1$ there  is an isomorphism
$$
g'^r_*\omega_{ Z'^r/ X'}^{\otimes k}\cong \hat \bigotimes ^rg'_*\omega_{ Z'/ X'}^{\otimes k}.
$$  

\end{enumerate}
\end{lem}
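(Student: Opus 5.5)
The plan is to follow \cite[Clm.~4.2]{VZ03} and to reduce to \Cref{L:VZ03Cor2.4} and \Cref{L:LV3.5}, using that the $r$-fold construction is the mild reduction of \Cref{P:VZ03S2} applied to $f^{(r)}$ with the \emph{same} base $X'\to\widetilde{\mathcal X}\to\mathcal X$.

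\emph{Step 1: the diagram.} We check that \eqref{E:VZS2diag-2} satisfies the conditions of \Cref{P:VZ03S2}. By \cite[Lem.~2.2(ii)]{VZ03}, $g'^r$ is mild, and it is the pull back to $X'$ of the $r$-fold self-fibered product of the weak semi-stable reduction $Z_0'\to X_0'$. The other spaces are the resolutions and normalizations fixed in the paragraph before the statement, and properties (i)--(v) follow exactly as in the proof of \Cref{P:VZ03S2}. The divisor $\mathbf\Delta(X'/\widetilde{\mathcal X})$ is unchanged. The snc condition on $\tilde f^{(r)*}(\tilde{\mathbf\Delta}+\mathbf\Delta(X'/\widetilde{\mathcal X}))$ is arranged by the strong log resolution defining $\widetilde{\mathcal Y}^{(r)}$ in \eqref{E:tildf(r)-def}. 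Consequently \Cref{L:VZ03Cor2.4}\ref{L:VZ03Cor2.4i}--\ref{L:VZ03Cor2.4iii} apply verbatim to $f^{(r)}$.

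\emph{Step 2: parts \ref{L:VZ03Clm4.2b} and \ref{L:VZ03Clm4.24.1.1}.} Part \ref{L:VZ03Clm4.2b} is \Cref{L:VZ03Cor2.4}\ref{L:VZ03Cor2.4iii} for the $r$-fold diagram. Over the complement of the discriminant, $f^{(r)}$ is smooth, so the inclusion is an isomorphism on $\tau^{-1}\mu^{-1}(\mathcal U)$.

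For part \ref{L:VZ03Clm4.24.1.1}, we work on the scheme $X'$. Since $g'$ is mild, it is flat and Gorenstein, and $Z'^r$ is Gorenstein with rational singularities by \cite[Lem.~2.2]{VZ03}. Hence $\omega_{Z'^r/X'}=\bigotimes_i \mathrm{pr}_i^*\omega_{Z'/X'}$.
\begin{itemize}
\item Over the open set where $g'_*\omega^{\otimes k}_{Z'/X'}$ is locally free (its complement has codimension $\ge2$), flat base change together with Künneth gives $g'^r_*\omega^{\otimes k}_{Z'^r/X'}\cong\bigotimes^r g'_*\omega^{\otimes k}_{Z'/X'}$.
\item Both sides are reflexive: the left by \cite[Lem.~2.2(iii)]{VZ03} applied to the mild morphism $g'^r$, the right by the definition of $\hat\otimes$.
\item Two reflexive sheaves that agree off codimension $2$ are isomorphic, which gives the claim.
\end{itemize}

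\emph{Step 3: part \ref{L:VZ03Clm4.2a}.} By \Cref{L:VZ03Cor2.4}\ref{L:VZ03Cor2.4ii} applied to both $g$ and $g^{(r)}$, we get
$g^{(r)}_*\omega^{\otimes k}_{Z^{(r)}/X'}\cong g'^r_*\omega^{\otimes k}_{Z'^r/X'}$, and this sheaf is reflexive. We also have $g_*\omega^{\otimes k}_{Z/X'}\cong g'_*\omega^{\otimes k}_{Z'/X'}$. Combining with Step 2 yields the isomorphism in \ref{L:VZ03Clm4.2a}.

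The choice of $r$ is immaterial here: any $r>0$ works. The specific $r$ only matters for later applications, so I would simply allow $r$ arbitrary.

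\emph{Main obstacle.} The delicate point is Step 1. One must check that $Z'^r$ is genuinely birational to the main component of $X'\times_{\widetilde{\mathcal X}}\widetilde{\mathcal Y}^{(r)}$. Only then does the birational map $Z'^r\dashrightarrow Z^{(r)}$ resolved by $Y''^{(r)}$ make sense, and only then can \Cref{C:rel-pluri} be applied through $Y''^{(r)}$.

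I would verify this over the generic point of $X'$. There all the constructions are base changes of the generic fiber of $f$ and its $r$-fold product, so the birationality is clear there. The remaining compatibility is between the main components and the stack-to-scheme base changes. This can be checked étale-locally on $\widetilde{\mathcal X}$, as in the proof of \Cref{L:LV3.2}.
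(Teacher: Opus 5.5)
Your proposal is correct and follows essentially the same route as the paper. Part (b) and the reflexivity in (a) come from \Cref{L:VZ03Cor2.4}(ii),(iii) applied to the $r$-fold family over the same base $X'\to\widetilde{\mathcal X}$. The isomorphism in (a) is then obtained by using \Cref{L:VZ03Cor2.4}(ii) to replace both sides by the corresponding sheaves for $g'^r$ and $g'$, and invoking (c). The paper defers (c) to \cite[Clm.~4.2]{VZ03}, whereas you spell it out: you prove it via flat base change and the projection formula on the locus where $g'_*\omega^{\otimes k}_{Z'/X'}$ is locally free, followed by extension of an isomorphism between reflexive sheaves across codimension $2$, which is the standard argument.
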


\begin{proof}
The proof is essentially identical to \cite[Clm.~4.2]{VZ03}. 
The only assertion involving stacks, \ref{L:VZ03Clm4.2b}, and the first part of \ref{L:VZ03Clm4.2a}, follow from \Cref{L:VZ03Cor2.4}\ref{L:VZ03Cor2.4ii} and \ref{L:VZ03Cor2.4iii}.  For the second part of \ref{L:VZ03Clm4.2a}, we can use \Cref{L:VZ03Cor2.4}\ref{L:VZ03Cor2.4ii} to replace the left hand side by $g'^r_*\omega_{ Z'^r/ X'}^{\otimes k}$ and the right hand side by $\hat \bigotimes ^rg'_*\omega_{ Z'/ X'}^{\otimes k}$; now employ \ref{L:VZ03Clm4.24.1.1}.
\end{proof}

\begin{rem}\label{R:VZ-disc}
In the notation of \Cref{L:VZ03Cor2.4} and \Cref{L:VZ03Clm4.2},  
let $\mathbf \Delta\subseteq \mathcal X$ be a divisor containing the discriminant $\mathbf \Delta_f$ of $f$.  Let $\mathcal U=\mathcal X-\mathbf \Delta$ be the complement of $\mathbf \Delta$, so that in particular,  $f|_{\mathcal U}:\mathcal Y_{\mathcal U}\to \mathcal U$ is smooth.  Then $f^r|_{\mathcal U}:(\mathcal Y^r)|_{\mathcal U}\to \mathcal U$ is also smooth, so that by construction of $f^{(r)}: \mathcal Y^{(r)}\to \mathcal X$, we have $\mathbf \Delta_{f^{(r)}}\subseteq \mathbf \Delta$.  In addition, by the construction of $\widetilde {\mathcal Y}$ as a strong log resolution of the main component of the fiber product $\widetilde {\mathcal X}\times_{\mathcal X}\mathcal Y$, we have that  $\tilde f|_{\mu^{-1}(\mathcal U)}:\widetilde {\mathcal Y}|_{\mu^{-1}(\mathcal U)}\to \mu^{-1}(\mathcal U)$ is smooth.  Similarly,  $\tilde f^{(r)}|_{\mu^{-1}(\mathcal U)}:\widetilde {\mathcal Y}^{(r)}|_{\mu^{-1}(\mathcal U)}\to \mu^{-1}(\mathcal U)$ is smooth. It follows that $\mathbf \Delta_{\tilde f}\subseteq \mu^{-1}\mathbf \Delta$ and $\mathbf \Delta_{\tilde f^{(r)}} \subseteq \mu^{-1}\mathbf \Delta$.  In particular, if $\tilde {\mathbf \Delta}\subseteq \widetilde {\mathcal X}$ is any divisor containing the support of $\mu^{-1}\mathbf \Delta$, then $\mathbf \Delta_{\tilde f},\mathbf \Delta_{\tilde f^{(r)}}\subseteq \tilde {\mathbf \Delta}$. 

\end{rem}

\section{Some geometric constructions}\label{S:GeomConstRoots}

In order to make some geometric constructions with Hodge modules later, we will need some geometric constructions that modify our families to ensure the existence of nontrivial sections of certain line bundles over our families.  This generalizes some results from \cite{PS17, WW23}.

\subsection{A geometric construction for families with good minimal models} \label{S:s-geom-conPSVZ}
In this subsection we extend a geometric construction from \cite[Prop.~A.1]{PTW18} to the case of DM stacks (\Cref{P:PTW18-PA.1}).
The following result extends results of \cite[Thm.]{kollar_sub} and \cite[Thm.~1.1]{kawamata_kod_dim} to the situation of stacks; while technically we do not end up using this version, we include it for context.

\begin{lem}\label{L:preT:PS-4.2/21VZ}
 Let $f : \mathcal Y \to  \mathcal X$ be a surjective projective schematic  morphism of smooth proper integral  DM stacks over $\mathbb C$ with projective coarse moduli spaces, of maximal variation, with smooth and geometrically connected generic fiber.  
Assume that the  geometric generic fiber of $f$ is of general type, or, more generally,  the geometric generic fiber of $f$ admits a good minimal model.   
 Then  there is an integer  $m>0$ such that $\det (((f_*\omega_{  \mathcal Y/\mathcal  X})^{\otimes m})^{\vee\vee})$ is big. 
\end{lem}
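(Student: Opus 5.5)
The plan is to reduce to the case of varieties by a finite flat base change and then descend bigness. By \S\ref{S:DM-intro}, choose a schematic finite flat morphism $q:V\to \mathcal X$ from a smooth projective variety $V$. Let $Y_V:=V\times_{\mathcal X}\mathcal Y$, which is projective since $f$ is schematic projective, and let $Y'_V\to Y_V$ be a resolution of the main component, with induced morphism $f_V:Y'_V\to V$. By \S\ref{S:variation}, $\operatorname{Var}(f_V)=\operatorname{Var}(f)=\dim\mathcal X=\dim V$. The geometric generic fiber of $f_V$ is the geometric generic fiber of $f$, so it is of general type, or admits a good minimal model. We may then apply Kawamata's theorem \cite[Thm.~1.1]{kawamata_kod_dim} to $f_V$; in the general type case Koll\'ar's theorem \cite[Thm.]{kollar_sub} also suffices. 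This gives an integer $m>0$ such that $\det(((f_{V*}\omega_{Y'_V/V})^{\otimes m})^{\vee\vee})$ is big on $V$.

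The next step is to compare the two determinants. Over the open substack $\mathcal U\subseteq\mathcal X$ where $q$ is \'etale and $f$ is smooth, relative duality is \'etale local (\S\ref{S:DuDM}). Flat base change then identifies $q^*f_*\omega_{\mathcal Y/\mathcal X}$ with $f_{V*}\omega_{Y'_V/V}$ over $q^{-1}\mathcal U$. Globally, \Cref{L:LV3.2} (Viehweg's inclusion), applied to the flat morphism $q$, gives an inclusion
$$f_{V*}\omega_{Y'_V/V}\hookrightarrow q^*f_*\omega_{\mathcal Y/\mathcal X}.$$
This inclusion is an isomorphism over $q^{-1}(\mathcal X_1)$ and where $q$ is smooth. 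If $Y_V^\nu$ fails to have rational singularities, we first replace $f$ by a mild or semistable-in-codimension-one model, using \Cref{P:PV6.1} or \Cref{L:VZ03Cor2.4}, so that the lemma applies. Taking $m$-th tensor powers, reflexive hulls and determinants, we get
$$q^*\det\big(((f_*\omega_{\mathcal Y/\mathcal X})^{\otimes m})^{\vee\vee}\big)\cong \det\big(((f_{V*}\omega_{Y'_V/V})^{\otimes m})^{\vee\vee}\big)\otimes\mathcal O_V(B),$$
where $B$ is an effective divisor. Determinants commute with flat pullback, and reflexive hulls commute with flat pullback between normal schemes. Since big plus effective is big, the pullback $q^*L$ of $L:=\det(((f_*\omega_{\mathcal Y/\mathcal X})^{\otimes m})^{\vee\vee})$ is big on $V$.

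Finally, bigness descends along the finite surjective morphism $V\to\mathcal X\to X$. Some power $L^{\otimes N}$ descends to a line bundle $L_X$ on the coarse space $X$, and $q^*L^{\otimes N}$ is the pullback of $L_X$ under the finite surjective map $V\to X$. A line bundle on a projective variety is big if and only if its pullback under a finite surjective morphism is big, which follows from Kodaira's lemma or from volume comparison. Hence $L_X$ is big, so $L$ is big in the sense of \cite[Def.~2.4]{CMZpositivity}.

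The main obstacle is the comparison step. The base change $Y_V$ can be singular over the branch locus of $q$, so one must make sure the inclusion goes in the direction that preserves bigness, and that $q$ is at least flat. I would first try simply using $q$ finite flat together with \Cref{L:LV3.2}, invoking the mild reduction of \Cref{L:VZ03Cor2.4}\ref{L:VZ03Cor2.4iii} if rationality of singularities is needed.
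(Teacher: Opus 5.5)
Your overall route is the paper's: finite flat base change $q\colon V\to\mathcal X$, the inclusion of \Cref{L:LV3.2}, Koll\'ar/Kawamata on $V$, ``big plus effective'', and descent of bigness along the finite map $V\to X$ (the paper cites \cite[Lem.~2.5]{CMZpositivity} for the last step).

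The gap is in the sheaf you feed into \cite[Thm.]{kollar_sub} and \cite[Thm.~1.1]{kawamata_kod_dim}. Those results give bigness of $\det f_{V*}(\omega_{Y'_V/V}^{\otimes m})$ for some $m$, that is, of a \emph{pluricanonical} pushforward. They say nothing about $\det\big(((f_{V*}\omega_{Y'_V/V})^{\otimes m})^{\vee\vee}\big)$. The latter equals $(\det f_{V*}\omega_{Y'_V/V})^{\otimes m r^{m-1}}$, with $r$ the rank (compare outside codimension $2$). So it is big only if $\det f_{V*}\omega_{Y'_V/V}$ is, and that fails in general. For a maximally varying family of surfaces of general type with $p_g=0$, one has $f_*\omega=0$ and the determinant is trivial.

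So the statement, read literally with $(f_*\omega)^{\otimes m}$, is false. The intended sheaf is $f_*\omega_{\mathcal Y/\mathcal X}^{\otimes m}$. This is what the paper's proof actually treats, and it is how the lemma is used in Step~1 of the proof of \Cref{P:PTW18-PA.1}.

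The repair is to apply \Cref{L:LV3.2} with $k=m$, which gives $f_{V*}\omega^{\otimes m}_{Y'_V/V}\hookrightarrow q^*f_*\omega^{\otimes m}_{\mathcal Y/\mathcal X}$ directly, instead of taking $k=1$ and then tensor powers. Your remaining steps then go through unchanged and coincide with the paper's argument: reflexive hulls and determinants commuting with flat pullback, the effective divisor $B$ coming from a generically bijective inclusion of torsion-free sheaves, and descent.

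Separately, drop the detour through \Cref{P:PV6.1} or \Cref{L:VZ03Cor2.4}. Those constructions change the base, so they are not a drop-in replacement for $q$, and they are not needed. As remarked at the end of the proof of \Cref{L:LV3.2}, the map $i_k$ is constructed without the rationality hypothesis; rationality only enters in the isomorphism over $\tau^{-1}(\mathcal X_1)$. For the determinant comparison you only need $i_k$ to be generically an isomorphism with torsion-free source, hence injective. Generic isomorphism holds over the locus where $q$ is \'etale, where $V\times_{\mathcal X}\mathcal Y$ is smooth.
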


\begin{proof}
Let $\tau:V\to \mathcal X$ be a finite flat morphism from a smooth projective variety $V$, let $Y_V$ be a strong log resolution of singularities of the main component of $\mathcal V\times_{\mathcal X}\mathcal Y$, and let $f_V:Y_V\to V$ be the induced morphism.  Then, from 
\Cref{L:LV3.2}, for all $k\ge 0$ there is a natural inclusion 
$$
 f_{V*}\omega_{\mathcal Y_V/V}^{\otimes k}\longrightarrow \tau^*f_*\omega_{\mathcal Y/\mathcal X} ^{\otimes k}, 
$$
which is an isomorphism over  the open subvariety $U'$ of $\mathcal X'$ where $\tau$ is \'etale.  Taking reflexive hulls gives an injection 
\begin{equation}\label{E:preT:PS-4.2/21VZ}
 (f_{V*}\omega_{\mathcal Y_V/V}^{\otimes k})^{\vee \vee}\longrightarrow (\tau^*f_*\omega_{\mathcal Y/\mathcal X} ^{\otimes k})^{\vee\vee},
\end{equation}
as the morphism is injective generically, and the sheaf on the left is torsion-free.  
We first note that, given our assumptions on the geometric generic fiber of $f$, which hold by base change for the geometric generic fiber of $f_V$, we have that  the determinant of $ (f_{V*}\omega_{\mathcal Y_V/V}^{\otimes k})^{\vee \vee}$ is big  (\cite[Thm.]{kollar_sub} and \cite[Thm.~1.1]{kawamata_kod_dim}).

Then, since the quotient of the morphism in \eqref{E:preT:PS-4.2/21VZ} is torsion, and is a quotient of a torsion-free sheaf, taking determinants (see \cite[Lem.~1.1]{CMZslope_stability}), we see that  the determinant of $(\tau^*f_*\omega_{\mathcal Y/\mathcal X} ^{\otimes k})^{\vee\vee}$ is big (it is big plus effective).  Using the functoriality of pull backs of determinants for torsion-free sheaves over finite flat morphisms (see \cite[\S 1.4]{CMZpositivity}), the above discussion implies that $\tau^*(\det (f_*\omega_{\mathcal Y/\mathcal X} ^{\otimes k})^{\vee\vee}))$ is big, which implies that $\det (f_*\omega_{\mathcal Y/\mathcal X} ^{\otimes k})^{\vee\vee})$ is big (see \cite[Lem.~2.5]{CMZpositivity}).  
\end{proof}

With this we can extend a geometric construction from \cite[Prop.~A.1]{PTW18} to the case of DM stacks:

\begin{pro}\label{P:PTW18-PA.1}
Let $f : \mathcal Y \to  \mathcal X$ be a surjective  schematic projective  morphism of smooth proper integral  DM stacks over $\mathbb C$ with projective coarse moduli spaces, of maximal variation, with smooth geometrically connected generic fiber, and with discriminant $\mathbf \Delta_f$ contained in a  normal crossings divisor $\mathbf \Delta\subseteq \mathcal X$.  
Assume that the  geometric generic fiber of $f$ is of general type, or, more generally,   admits a good minimal model.  Further, assume that $\X$ has generically trivial stabilizers.

Then there is a blow-up $\mu:\widetilde {\mathcal X}\to \mathcal X$, integers $r,m>0$, 
 and a surjective schematic  morphism (see \eqref{E:tildf(r)-def})
 $$
\tilde f^{(r)} : \widetilde {\mathcal Y}^{(r)} \to   \widetilde {\mathcal X}
$$
 of  smooth proper integral DM stacks over $\mathbb C$ with projective coarse moduli spaces, 
such that the pre-image $\mu^{-1}\mathbf \Delta$ is an snc divisor with support containing the discriminant $\mathbf \Delta_{\tilde f^{(r)}}$, and for any snc divisor $\tilde {\mathbf \Delta}$ containing $\mu^{-1}\mathbf \Delta$, there exists 
 a line bundle  
 $\tilde{\mathcal A}$ on $\widetilde {\mathcal X}$ such that 
the line bundle   $\tilde{\mathcal A}(-\tilde {\mathbf \Delta})$ is big, and 
setting 
\begin{equation}\label{E:P:PTW18-PA.1}
  \widetilde{\mathcal B}:=\omega_{ \widetilde {\mathcal Y}^{(r)}/ \widetilde {\mathcal X}}\otimes  \tilde f^{(r)*}\tilde{\mathcal A}^{\otimes -1},
\end{equation}
the sheaf $\tilde f^{(r)}_*(\widetilde {\mathcal B}^{\otimes m})$ is globally generated over the open substack $ \widetilde { U}^\circ$ of  $\widetilde {\mathcal U}:=\widetilde {\mathcal X}-\tilde {\mathbf \Delta}$ having trivial stabilizers ($\widetilde U^\circ$ is isomorphic to a quasi-projective variety).

Moreover, if the fibers of $f$ over the complement of $\mathbf \Delta$ have semi-ample canonical bundle, then $\widetilde {\mathcal B}^{\otimes m}$ is also generated by global sections over $(\tilde f^{(r)})^{-1}(\widetilde { U}^\circ)$. 
\end{pro}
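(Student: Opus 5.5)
Following \cite[Prop.~A.1]{PTW18}, the plan is to run the mild reduction and fibered product machinery of \S\ref{S:mild_redux}. Then pass to the trivial-stabilizer locus, where $\widetilde{\mathcal X}$ is a variety and the variety argument applies directly.

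\emph{Step 1: choice of $\mu$ and the comparison sheaves.} Apply \Cref{L:VZ03Cor2.4} to $f$ and $\mathbf\Delta$. This produces the blow-up $\mu:\widetilde{\mathcal X}\to\mathcal X$ and the finite flat cover $\tau:X'\to\widetilde{\mathcal X}$ from the mild reduction \Cref{P:VZ03S2}. By condition \ref{P:VZ03S2DNC}, $\mu^{-1}\mathbf\Delta$ is snc, and \Cref{R:VZ-disc} gives $\mathbf\Delta_{\tilde f^{(r)}}\subseteq \tilde{\mathbf\Delta}$ for every $r$ and every $\tilde{\mathbf\Delta}\supseteq\mu^{-1}\mathbf\Delta$.

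\emph{Step 2: building $\tilde{\mathcal A}$.} By \Cref{L:VZ03Cor2.4}\ref{L:VZ03Cor2.4iv}, for each $k$ there are $N_k$ and a line bundle $\lambda_k$ on $\widetilde{\mathcal X}$ with $\tau^*\lambda_k\cong\det(g_*\omega^{\otimes k}_{Z/X'})^{\otimes N_k}$. Maximal variation and the good minimal model hypothesis give that $\det g_*\omega^{\otimes k}_{Z/X'}$ is big on $X'$ for suitable $k$, by Koll\'ar and Kawamata \cite{kollar_sub,kawamata_kod_dim}, just as in \Cref{L:preT:PS-4.2/21VZ}. Hence $\lambda_k$ is big on $\widetilde{\mathcal X}$ by \cite[Lem.~2.5]{CMZpositivity}. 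Replacing $\lambda_k$ by a power, we may assume $\lambda_k(-\tilde{\mathbf\Delta})$ is big, since big minus a fixed effective divisor is big after scaling. Let $\rho=\operatorname{rk} g_*\omega^{\otimes k}_{Z/X'}$. Viehweg's determinant trick embeds $\det$ into $\hat\bigotimes^{\rho}$. With $r=\rho N_k$, \Cref{L:VZ03Clm4.2}\ref{L:VZ03Clm4.2a},\ref{L:VZ03Clm4.2b} then give a nonzero map $\tau^*\lambda_k\to g^{(r)}_*\omega^{\otimes k}_{Z^{(r)}/X'}\hookrightarrow\tau^*\tilde f^{(r)}_*\omega^{\otimes k}_{\widetilde{\mathcal Y}^{(r)}/\widetilde{\mathcal X}}$. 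Taking $\tilde{\mathcal A}$ to be a suitable root or twist of $\lambda_k$ (e.g.\ $\tilde{\mathcal A}^{\otimes k}=\lambda_k$ after raising powers), we get a nonzero section of $\widetilde{\mathcal B}^{\otimes k}$ on $\widetilde{\mathcal Y}^{(r)}$.

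\emph{Step 3: global generation over $\widetilde U^\circ$ (the main obstacle).} Upgrading this to global generation of $\tilde f^{(r)}_*\widetilde{\mathcal B}^{\otimes m}$ over $\widetilde U^\circ$ is where generically trivial stabilizers are used. Over $\widetilde U^\circ$, $\tilde f^{(r)}$ is a smooth projective morphism of quasi-projective varieties, and $\widetilde U^\circ$ is open in the projective coarse space $\widetilde X$. I would run the PTW argument on the coarse space. Weak positivity of $\tilde f^{(r)}_*\omega^{\otimes k}$ over $\widetilde{\mathcal U}$ can be proved after pull-back along $\tau$ and descended, since $\tau$ is \'etale over $\widetilde U^\circ$. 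Combine it with the bigness of $\tilde{\mathcal A}$, pushed to $\widetilde X$ and restricted to the locus where it is ample and the descended sheaves agree with the stacky ones. Viehweg's lemma then gives that $\hat S^{m}(\tilde f^{(r)}_*\omega^{\otimes k})\otimes\tilde{\mathcal A}^{-mk}$ is generically generated. Adjusting $\tilde{\mathcal A}$ by a further factor, and using the fibered product to absorb symmetric powers via \Cref{L:VZ03Clm4.2}, yields global generation on all of $\widetilde U^\circ$ rather than generically. I expect this step to be hardest: descending sections through the stabilizer locus and controlling the base locus to lie in $\tilde{\mathbf\Delta}$ or the nontrivial-stabilizer locus is delicate.

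\emph{Step 4: the semi-ample case.} If the fibers over $\mathcal X-\mathbf\Delta$ have semi-ample canonical bundle, then so do the fibers of $\tilde f^{(r)}$ over $\widetilde{\mathcal U}$, being products. Hence $f^*f_*\omega^{\otimes m}_{/}\to\omega^{\otimes m}_{/}$ is surjective there for $m$ divisible. Combined with global generation of the push-forward over $\widetilde U^\circ$, this gives global generation of $\widetilde{\mathcal B}^{\otimes m}$ over $(\tilde f^{(r)})^{-1}(\widetilde U^\circ)$.
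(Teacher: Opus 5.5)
Your Steps 1, 2 and 4 are broadly in line with the paper. Step 3, however, is the actual content of the proposition, and it has a genuine gap.

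\textbf{Generation at every point of $\widetilde U^\circ$.} Weak positivity together with Viehweg's lemma only shows that $\hat S^{m}(\tilde f^{(r)}_*\omega^{\otimes k})\otimes\tilde{\mathcal A}^{-mk}$ is generated over \emph{some} dense open subset, and you have no control over which one. The statement requires generation at \emph{every} point of $\widetilde U^\circ$. ``Adjusting $\tilde{\mathcal A}$ and absorbing symmetric powers via fibered products'' does not move the base locus off a prescribed set.

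The missing idea is a uniform bound on singularities over the whole good locus. On $U'=\tau^{-1}\widetilde{\mathcal U}$, the function $e(x')=e(\omega^{\otimes v}_{Z_{x'}})$ is upper semicontinuous and bounded by some $Cv$ \cite[Cor.~5.11]{Vmoduli}. The number of fiber factors must be tied to this bound, $r=C(C+1)vN_vr_0$. Then the determinant trick produces $\Gamma\in|\omega^{\otimes v}_{Z^{(r)}/X'}\otimes g^{(r)*}\tau^*(\tilde{\mathcal L}(\tilde{\mathbf\Delta}+\mathcal D))^{\otimes -C(C+1)v}|$ that contains no fiber over $U'$. Using $Z^{(r)}_{x'}=(Z_{x'})^r$, it also satisfies $\operatorname{lct}(\Gamma|_{Z^{(r)}_{x'}})>1/(Cv)$ for every $x'\in U'$.

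Only then does \cite[Thm.~A.2]{PTW18} give global generation at every point of $U'$. It is applied on the smooth projective variety $X'$, with $\mathscr B=\tau^*\tilde{\mathcal L}$, where $\tilde{\mathcal L}$ comes from an ample bundle on the coarse space and $\tau^*\tilde{\mathcal L}\otimes\omega_{X'}^{-1}$ separates $2n$-jets. Your Step 2 produces a section of this kind. But your $r=\rho N_k$ is unrelated to any log canonical threshold bound, so nothing controls its restriction to individual fibers. This is also why the paper's $\tilde{\mathcal A}$ has the form $\tilde{\mathcal L}^{\otimes(C-1)}(C(\tilde{\mathbf\Delta}+\mathcal D))$ with $m=k(C+1)v$, rather than a root of $\lambda_k$.

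\textbf{Descent to the stack.} The transfer to $\widetilde{\mathcal X}$ also fails as described. You use that $\tau$ is \'etale over $\widetilde U^\circ$. But the mild-reduction cover is branched along $\mathbf\Delta(X'/\widetilde{\mathcal X})$, which need not lie in $\tilde{\mathbf\Delta}$; this is why \Cref{P:PTW18-PA.4} has to take $\widetilde{\mathcal S}\supseteq\mathbf\Delta_\tau$. ``Running PTW on the coarse space'' would also need a family and a mild reduction over a resolution of the singular space $\widetilde X$, and then a comparison of sections back on $\widetilde{\mathcal X}$. You supply neither.

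The paper keeps all the analytic input on $X'$ and descends without any \'etaleness, in four moves:
\begin{enumerate}
\item It uses the inclusion $g^{(r)}_*\omega^{\otimes m}_{Z^{(r)}/X'}\hookrightarrow\tau^*\tilde f^{(r)}_*\omega^{\otimes m}_{\widetilde{\mathcal Y}^{(r)}/\widetilde{\mathcal X}}$, which is an isomorphism over $U'$.
\item It applies $\tau_*$ and the projection formula.
\item It uses that $\tau_*\mathcal O_{X'}\otimes\tilde{\mathcal L}^{\otimes N}$ is globally generated over the trivial-stabilizer locus. This holds because $\tilde L\otimes\tilde\pi_*\tau_*\mathcal O_{X'}$ is globally generated on $\widetilde X$, and the counit $\tilde\pi^*\tilde\pi_*\to\operatorname{id}$ is an isomorphism where $\tilde\pi$ is. This is exactly where the stabilizer hypothesis enters.
\item It finishes with the trace map $\tau_*\mathcal O_{X'}\to\mathcal O_{\widetilde{\mathcal X}}$.
\end{enumerate}
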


\begin{proof}
The proof is essentially identical to \cite[Prop.~A.1]{PTW18}, which builds off of the strategy in \cite[Prop.~4.1 and Cor.~4.3]{VZ03}.   The proof is somewhat lengthy, and requires some modifications in the situation of stacks, and so we break the proof into several steps for clarity.

\subsubsection*{Step 1: Geometric set-up via mild reduction, part I} 
To start we fix a mild reduction diagram for $f:\mathcal Y\to \mathcal X$ as in \eqref{E:VZ03Cor2.4} of \Cref{L:VZ03Cor2.4}:
\begin{equation*}
\xymatrix{
Z'  \ar[d]^{g'}&  Y''\ar[r]^\rho \ar[d]^{f''} \ar[l]_{\delta}&  Z \ar[r]^\sigma \ar[d]^g&  Y' \ar[r]^{\tau'} \ar[d]^{f'}& \widetilde {\mathcal Y}\ar[r]^{\widetilde \mu} \ar[d]^{\tilde f}&\mathcal Y \ar[d]^f\\
 X'\ar@{=}[r]&  X' \ar@{=}[r]&  X'\ar@{=}[r]&  X'\ar[r]^\tau& \widetilde {\mathcal X}\ar[r]^\mu& \mathcal X.
}
\end{equation*} 
Let $\tilde \pi:\widetilde {\mathcal X}\to \widetilde X$ be the coarse moduli space, and 
choose an ample line bundle $\tilde L$ on $\widetilde X$ so that $\tilde {\mathcal L}:=\tilde \pi^*L$ satisfies the conditions that     $\tau^*\tilde {\mathcal L}\otimes \omega_{X'}^{-1}$ 
separates $2n$-jets on $X'$, where $n=\dim \mathcal X$, and  
$\tau_*\mathcal O_{X'}\otimes \tilde {\mathcal L}$ is generated by global sections over a Zariski open substack of $\mathcal X$.     
 For the condition on $2n$-jets, recall that $\tilde \pi \circ \tau:X'\to \widetilde X$ is a finite morphism of projective varieties.   The global generation assertion is where we are using that the stabilizers of $\mathcal X$, and therefore $\widetilde {\mathcal X}$, are generically trivial.  More precisely, we can choose $\tilde L$ such that $\tilde L\otimes \widetilde \pi_*\tau_*\mathcal O_{ X'}$ is globally generated on $\widetilde X$, so that there is a surjection
$$
\bigoplus \mathcal O_{\widetilde X}\twoheadrightarrow \tilde L \otimes \widetilde \pi_*\tau_*\mathcal O_{ X'}.
$$
Applying $\widetilde \pi^*$, which is right exact, and post composing with the co-unit of adjunction, we have
$$
\bigoplus \mathcal O_{\widetilde {\mathcal X}}\twoheadrightarrow \tilde {\mathcal L}\otimes \widetilde \pi^*\widetilde \pi_*\tau_*\mathcal O_{ X'}\to \tilde {\mathcal L}\otimes \tau_*\mathcal O_{ X'} 
$$ 
where we use that the co-unit of adjunction is generically an isomorphism, since $\widetilde \pi$ is generically an isomorphism (the stabilizers of $\widetilde {\mathcal X}$ are generically trivial). 

Note that from \cite[Thm.]{kollar_sub} and \cite[Thm.~1.1]{kawamata_kod_dim} (e.g., \Cref{L:preT:PS-4.2/21VZ})
we have that the line bundle  $ \operatorname{det} ((g_*
\omega^{\otimes v}_{ Z/ X'})^{\vee\vee})$ is big, and so, together with \Cref{L:VZ03Cor2.4} \ref{L:VZ03Cor2.4iv}, which implies that some sufficiently large tensor power is the pull back via $\tau$ of a line bundle on $\widetilde {\mathcal X}$,   one can conclude that   for some sufficiently large natural number $N_v$, one has
\begin{equation}\label{E:detgtau*}
 (\operatorname{det} (g_*
\omega^{\otimes v}_{ Z/ X'}))^{\otimes N_{k}}\cong \tau^*(\tilde{\mathcal L}(\tilde {\mathbf \Delta} +\mathcal D))
\end{equation}
for some effective divisor $\mathcal D$ on $\widetilde {\mathcal X}$.

\subsubsection*{Step 2: Geometric set-up via mild reduction, part II} 
Let $\mathcal U:= \X-\mathbf \Delta$, let $\widetilde {\mathcal U}:= \widetilde {\mathcal X}-\tilde {\mathbf \Delta}$, and let $U':=\tau^{-1}\widetilde {\mathcal U}\subseteq X'$, and for each point $x'$ of $U'$, let $Z_{x'}$ be the fiber of $g$ over $x'$, which by assumption is a smooth projective variety.   Define a function $e$ on $U'$ by 
\[e(x'):=e(\omega^{\otimes v}_{Z_{x'}}):=\sup\left\{\frac{1}{\textup{lct}({E})} :  {E}\in \left|\omega^{\otimes v}_{Z_{x'}}\right|\right\}.
\]
The function $e$ is upper semi-continuous, and  there exists a positive integer $C$ such that for all $x'$ in $U'$ we have $$e(\omega^{\otimes v}_{Z_{x'}})< Cv.$$   
Recall that the upper semi-continuity follows from the lower semi-continuity of the log canonical threshold of relative divisors for smooth proper morphisms \cite[Prop.~5.17]{Vmoduli}, combined with 
the invariance of plurigenera, as using the invariance of plurigenera, one can show that any given divisor in question  on a fiber lifts to a relative divisor locally over the base of the family.  The uniform bound follows from  \cite[Cor.~5.11]{Vmoduli}. 

Now take $r=C(C+1)vN_vr_0$, where, in the notation of  \eqref{E:VZ03Cor2.4},  $r_0=\operatorname{rank}(g_{*} \omega^{\otimes v} _{Z'/X'} )$, 
and consider the mild reduction diagram \eqref{E:VZS2diag-2} for the $r$-fold fibered product of $f$, as in \Cref{L:VZ03Clm4.2}:
\begin{equation}\label{E:VZS2diag-2-proof(r)}
\xymatrix{
 {Z'}^r  \ar[d]^{{g'}^r}&  {Y''}^{(r)}\ar[r]^{\rho^{(r)}} \ar[d]^{{f''}^{(r)}} \ar[l]_{\delta^{(r)}}&  Z^{(r)} \ar[r]^{\sigma^{(r)}} \ar[d]^{g^{(r)}}&  {Y'}^{(r)} \ar[r]^{{\tau'}^{(r)}} \ar[d]^{{f'}^{(r)}}& \widetilde {\mathcal Y}^{(r)}\ar[r]^{\widetilde \mu^{(r)}} \ar[d]^{\tilde f^{(r)}}&\mathcal Y^{(r)} \ar[d]^{f^{(r)}}\\
 X'\ar@{=}[r]&  X' \ar@{=}[r]&  X'\ar@{=}[r]&  X'\ar[r]^\tau& \widetilde {\mathcal X}\ar[r]^\mu& \mathcal X.
}
\end{equation}

\subsubsection*{Step 3: Global generation of the push forward of the relative pluricanonical sheaf for $g^{(r)}$ (see \eqref{E:GGg*wZr/X'})}

The next steps of the proof all focus on the left hand portion of the diagram above, where all the spaces are projective varieties, and the arguments are therefore identical to those of \cite[Prop.~A.1]{PTW18}. We will need some of the construction, and so we recall this briefly here.  Using \cite[Cor.~2.4(vii)]{VZ03} (\Cref{L:VZ03Cor2.4}\ref{L:VZ03Cor2.4ii}), one has $g'_*\omega_{ Z'/ X'}^{\otimes v} \cong g_*\omega_{ Z/ X'}^{\otimes v}$ and $g'^r_*\omega_{ Z'^r/ X'}^{\otimes v} \cong g^{(r)}_*\omega_{ Z^{(r)}/ X'}^{\otimes v}$, and by \cite[Lem.~3.5]{Vkod1} (\Cref{L:LV3.5}), one has the isomorphism $g'^r_*\omega^{\otimes v}_{Z'^r/X'}\cong (\bigotimes ^rg'_*\omega_{ Z'/ X'}^{\otimes v})^{\vee\vee}$.  

In addition, via the standard arguments regarding determinants and tensor products  of vector bundles, together with push forwards of vector bundles over codimension $2$ loci, there is a natural inclusion  $(\det g_*\omega_{ Z/ X'}^{\otimes v}) \hookrightarrow (\bigotimes ^{r_0}g'_*\omega_{ Z'/ X'}^{\otimes v})^{\vee\vee}$  that is split over $U'$.  Putting this all together with \eqref{E:detgtau*} one obtains an inclusion 
$$
 \tau^*(\tilde{\mathcal L}(\tilde{\mathbf \Delta} +\mathcal D))^{\otimes C(C+1)v}
 \cong  (\operatorname{det} (g_*
\omega^{\otimes v}_{ Z/ X'}))^{\otimes C(C+1)vN_{v}}\hookrightarrow g^{(r)}_*\omega_{ Z^{(r)}/ X'}^{\otimes v}
 $$
 that is also split over $U'$. This corresponds to an effective divisor $$\Gamma\in \left|\omega ^{\otimes v}_{Z^{(r)}/X'}\otimes g^{(r)*}\tau^*\tilde{\mathcal L}(\tilde{\mathbf \Delta} +\mathcal D)^{\otimes -C(C+1)v}\right|$$ that does not contain the fiber $Z^{(r)}_{x'}$ for every $x'\in U'$.  Since $Z^{(r)}_{x'}=(Z_{x'})^r=Z_{x'}\times \cdots \times Z_{x'}$, using the bound $e(\omega^{\otimes v}_{Z_{x'}})< Cv$, and  \cite[Cor.~5.21]{Vmoduli}, we have $\operatorname{lct}(\Gamma|_{Z^{(r)}_{x'}})>1/Cv$ for every $x'$ in $U'$.   
 
 We can now apply \cite[Thm.~A.2]{PTW18}. In the notation of  that theorem, take  $$\mathscr M=\omega ^{\otimes kv}_{Z^{(r)}/X'}\otimes g^{(r)*}\tau^*\tilde{\mathcal L}(\tilde{\mathbf \Delta} +\mathcal D)^{\otimes -kC(C+1)v}$$ with the natural singular metric induced by the effective divisor $k\Gamma$, take $\mathscr B= \tau^*\tilde{\mathcal L}$, and take the index $k$ in the theorem to be $kCv$.   Using that, up to the pull back of a line bundle on the base, $\mathscr M$ agrees with $\omega ^{\otimes kv}_{Z^{(r)}/X'}$, one has that the fibers of  $\mathscr M$ over points in $U'$  are pluricanonical bundles, so that the invariance of plurigenera,  together with the fact that $\left(\frac{1}{Cv}\Gamma\right)|_{Z^{(r)}_{x'}}$ is klt for all $x'$ in $U'$,  imply  that the hypotheses of the theorem are satisfied.   Applying \cite[Thm.~A.2]{PTW18} gives that the sheaf  
\begin{equation}\label{E:GGg*wZr/X'}
g^{(r)}_*\omega ^{\otimes k(C+1)v}_{Z^{(r)}/X'}\otimes \tau^*\tilde {\mathcal L}(\tilde{\mathbf \Delta} +\mathcal D)^{\otimes -kC(C+1)v}\otimes \tau^*\tilde {\mathcal L}
\end{equation}
is generated by global sections over $U'$.

\subsubsection*{Step 4: Global generation of the push forward of the relative pluricanonical sheaf for $\tilde f^{(r)}$ (see \eqref{E:L:VZ03proof319})}
Now, returning to the case of stacks, using \Cref{L:VZ03Clm4.2}\ref{L:VZ03Clm4.2b} there is an inclusion  
\begin{equation}\label{E:L:VZ03Clm4.2b-proof}
g^{(r)}_*\omega ^{\otimes k(C+1)v}_{Z^{(r)}/X'}\hookrightarrow \tau^*\tilde f^{(r)}_*\omega^{\otimes k(C+1)v}_{\widetilde {\mathcal Y}^{(r)}/\widetilde {\mathcal X}},
\end{equation}
which is an isomorphism over $U'$.  
Combining with the global generation of \eqref{E:GGg*wZr/X'}, there is a morphism  
\begin{equation}\label{E:L:VZ03proof316}
\bigoplus \mathcal O_{X'}\longrightarrow  \tau^*\tilde f^{(r)}_*\omega^{\otimes k(C+1)v}_{\widetilde {\mathcal Y}^{(r)}/\widetilde {\mathcal X}}\otimes \tau^*\tilde {\mathcal L}(\tilde{\mathbf \Delta} +\mathcal D)^{\otimes -kC(C+1)v}\otimes \tau^*\tilde {\mathcal L}
\end{equation}
that is a surjection over $U'$.   Applying $\tau_*$, and the projection formula on the right, we obtain a morphism 
\begin{equation}
\bigoplus \tau_*\mathcal O_{X'}\longrightarrow  \tilde f^{(r)}_*\omega^{\otimes k(C+1)v}_{\widetilde {\mathcal Y}^{(r)}/\widetilde {\mathcal X}}\otimes \tilde {\mathcal L}(\tilde{\mathbf \Delta} +\mathcal D)^{\otimes -kC(C+1)v}\otimes \tilde {\mathcal L}\otimes \tau_*\mathcal O_{X'},
\end{equation}
which is still a surjection over $\widetilde {\mathcal U}$.   Tensoring by $\tilde {\mathcal L}^{k(C+1)v-1}$, and recalling that by assumption $\tau_*\mathcal O_{X'}\otimes \tilde {\mathcal L}^{k(C+1)v-1}$ is generated by global sections over the open substack $\widetilde {\mathcal U}^\circ$  of $\widetilde {\mathcal U}$ having trivial stabilizers, we have a morphism 
\begin{equation}
\bigoplus \mathcal O_{\widetilde {\mathcal X}} \longrightarrow  \tilde f^{(r)}_*\omega^{\otimes k(C+1)v}_{\widetilde {\mathcal Y}^{(r)}/\widetilde {\mathcal X}}\otimes \tilde {\mathcal L}^{\otimes -k(C-1)(C+1)v}\otimes \mathcal O_{\widetilde {\mathcal X}}(-kC(C+1)v(\tilde{\mathbf \Delta} +\mathcal D))\otimes  \tau_*\mathcal O_{X'},
\end{equation}
which is a surjection over $\widetilde {\mathcal U}^\circ$.  Finally, via the trace map $\tau_*\mathcal O_{X'}\to \mathcal O_{\widetilde {\mathcal X}}$, we obtain a morphism  
\begin{equation}\label{E:L:VZ03proof319}
\bigoplus \mathcal O_{\widetilde {\mathcal X}} \longrightarrow  \tilde f^{(r)}_*\omega^{\otimes k(C+1)v}_{\widetilde {\mathcal Y}^{(r)}/\widetilde {\mathcal X}}\otimes \tilde {\mathcal L}^{\otimes -k(C-1)(C+1)v}\otimes \mathcal O_{\widetilde {\mathcal X}}(-kC(C+1)v(\tilde{\mathbf \Delta} +\mathcal D)),
\end{equation}
which is a surjection over $\widetilde {\mathcal U}^\circ$. 
 This completes the proof of the first claim of the proposition.  Indeed, we take  $\tilde {\mathcal A}:=\tilde {\mathcal L}^{\otimes(C-1)}\otimes \mathcal O_{\widetilde {\mathcal X}}(C(\tilde{\mathbf \Delta} +\mathcal D))$, and   $m:=k(C+1)v$.

\subsubsection*{Step 5: Global generation of the relative pluricanonical sheaf for $\tilde f^{(r)}$}
The second claim follows from the first, recalling that $\widetilde { U}^\circ$ is a variety,  so that $\tilde f^{(r)}|_{(\tilde f^{(r)})^{-1} \widetilde { U}^\circ}:(\tilde f^{(r)})^{-1} \widetilde { U}^\circ \to \widetilde { U}^\circ $ is a morphism of varieties,  and then noting that the assumption on the fibers  implies, after possibly replacing $k$ with a sufficiently large multiple to get a uniform global generation statement,  that the
natural map $$\tilde f^{(r)*}\tilde f^{(r)}_*\omega^{\otimes m}_{\widetilde {\mathcal Y}^{(r)}/\widetilde {\mathcal X}} \to \omega^{\otimes m}_{\widetilde {\mathcal Y}^{(r)}/\widetilde {\mathcal X}}$$ 
is surjective over $(\tilde f^{(r)})^{-1}\widetilde { U}^\circ$. 
\end{proof}

We now make the following observation following \cite[Rem.~A.3]{PTW18}:

\begin{rem}\label{R:PTW18-RA.3}
Recalling that $g'^r_*\omega_{ Z'^r/ X'}^{\otimes m} \cong g^{(r)}_*\omega_{ Z^{(r)}/ X'}^{\otimes m}$, then 
applying $\tau_{*}$ to \eqref{E:L:VZ03Clm4.2b-proof}, and finally utilizing the  
trace map for $\tau$,  we have a morphism 
\[ \tau_{*} g'^r_{*}\omega^{\otimes m}_{ Z'^r/{X}'} \longrightarrow \tau_{*}\tau^*\tilde f_{*}^{(r)}\omega^{\otimes m}_{\widetilde{\Y}^{(r)}/\widetilde{\X}_1} \longrightarrow \tilde f_{*}^{(r)}\omega^{\otimes m}_{\widetilde{\Y}^{(r)}/\widetilde{\X}}.
\]
Tensoring by $-\otimes \tilde {\mathcal A}^{\otimes -m}$, we conclude the existence of a morphism
\[\tau_{*} g'^r_{*}\omega^{\otimes m}_{ Z'^r/{X}'}\otimes \tilde {\mathcal A}^{\otimes-m}\longrightarrow  \tilde f_{*}^{(r)}\omega^{\otimes m}_{\widetilde{\Y}^{(r)}/\widetilde{\X}}\otimes \tilde{\mathcal A}^{\otimes-m}
\]
This induces a natural map on global sections, and we denote by $\mathbb V_m$ the image of this map:
\begin{equation}\label{E:VVmdefinition}
\xymatrix{
H^0(X',g'^r_*\omega_{ Z'^r/ X'}^{\otimes m} \otimes \tau^*\tilde{\mathcal A}^{\otimes -m}) \ar[r] \ar@{->>}[rd]& H^0(\widetilde {\mathcal X},\tilde f^{(r)}_*\omega^{\otimes m}_{\widetilde {\mathcal Y}^{(r)}/\widetilde {\mathcal X}}\otimes \tilde{\mathcal A}^{\otimes -m})\\
& \mathbb V_m \ar@{^(->}[u].\\
}
\end{equation}
The proof of the proposition above, and in particular the computations in \eqref{E:L:VZ03proof316}--\eqref{E:L:VZ03proof319}, show more precisely that $ \tilde f^{(r)}_*\omega^{\otimes m}_{\widetilde {\mathcal Y}^{(r)}/\widetilde {\mathcal X}}\otimes \tilde{\mathcal A}^{\otimes -m}$ is generated over $\widetilde { U}^\circ$ by the sections belonging to the subspace $\mathbb V_m$.

\end{rem}

\subsubsection{A further log resolution of the base}
For the arguments later with Hodge modules and Higgs bundles, we will actually need to make a further modification of the base of the family. 
For this we generalize \cite[Prop.~A.4]{PTW18}: 

\begin{pro}\label{P:PTW18-PA.4}
With notation as in the statement and proof of \Cref{P:PTW18-PA.1} and \Cref{R:PTW18-RA.3}, let $\widetilde {\mathcal S}\subseteq \widetilde {\mathcal X}$ be an effective divisor  containing the discriminant  $\mathbf \Delta_\tau$ of $\tau$ (see diagram \eqref{E:VZS2diag-2-proof(r)}).  Let $ \mu_1:\widetilde {\mathcal X}_1\to \widetilde{\mathcal{X}}$ be a log resolution of the pair $(\widetilde{\mathcal{X}},\tilde {\mathbf \Delta}+\widetilde {\mathcal S})$ with centers contained in $\mathcal S$  such that 
$\mu_1^{-1}\mathbf \Delta_\tau$ is also normal crossings.

  Then there is a commutative diagram
$$
\xymatrix{
\widetilde{\mathcal Y}^{(r)}_1 \ar[r]^<>(0.5){\tilde \mu_1} \ar[d]^{{\tilde f}_1^{(r)}}& \widetilde{\mathcal{Y}}^{(r)} \ar[d]^{{\tilde f}^{(r)}}\\
\widetilde{\mathcal X}_1\ar[r]^{\mu_1}&\widetilde{\mathcal{X}}
}
$$
and a closed substack $\mathbf T\subseteq \widetilde {\mathcal X}_1$ of codimension at least $2$ that satisfies the following conditions, where we fix the notation  $\widetilde {\mathcal X}_1^\circ:=\widetilde {\mathcal X}_1-\mathbf T$, $\widetilde {\mathcal Y}_1^{(r)\circ}:=\widetilde {\mathcal Y}_1^{(r)}-\tilde f^{(r)-1}(\mathbf T)$,  and $\tilde f^{(r)\circ} =\tilde f^{(r)}|_{\widetilde {\mathcal Y}^{(r)\circ}}$:

\begin{enumerate}[label=(\alph*)]

\item  The morphism  $\tilde f_1^{(r)}: \widetilde {\mathcal Y}^{(r)}_1\to \widetilde {\mathcal X}_1$ is  schematic and projective from a smooth proper integral DM stack $\widetilde {\mathcal Y}^{(r)}_1$ over $\mathbb C$ with projective coarse moduli space.

\item  \label{P:PTW18-PA.4-2}

The morphism 
$\tilde \mu_1 $ is schematic projective and birational with divisorial exceptional locus, and setting $\widetilde {\mathcal E}_1\subseteq \widetilde {\mathcal X}_1$ to be the exceptional locus of $\mu_1$ (which by construction is contained in $\mu_1^{-1}(\mathcal S$)), $\widetilde {\mathbf Z}:= \mu_1(\widetilde {\mathcal E}_1)\subseteq \widetilde {\mathcal X}$,  $\widetilde {\mathcal U}_1:=\widetilde {\mathcal X}_1-\widetilde {\mathcal E}_1$, $\widetilde {\mathcal U}:=\widetilde {\mathcal X}-\widetilde {\mathbf Z}$,  $\widetilde {\mathcal Y}^{(r)}_{1,\widetilde {\mathcal U}_1}:=(\tilde f_1^{(r)})^{-1}(\widetilde {\mathcal U}_1)$, and $\widetilde {\mathcal Y}^{(r)}_{\widetilde {\mathcal U}}:=(\tilde f^{(r)})^{-1}(\widetilde {\mathcal U})$, we have that $\tilde \mu_1  $ restricts to an isomorphism 
 $
 \tilde \mu_1 |_{\widetilde {\mathcal Y}^{(r)}_{1,\widetilde {\mathcal U}_1}}:\widetilde {\mathcal Y}^{(r)}_{1,\widetilde {\mathcal U}_1} \stackrel{\sim}{\rightarrow} \widetilde {\mathcal Y}^{(r)}_{\widetilde {\mathcal U}} $
 over  $\mu_1|_{\widetilde {\mathcal U}_1}: \widetilde {\mathcal U}_1\stackrel{\sim}{\to} \widetilde {\mathcal U}$.

\item Recall that $\widetilde  {\mathcal B}= \omega_{\widetilde {\mathcal Y}^{(r)}/\widetilde {\mathcal X}}\otimes  \tilde f^{(r)*}\tilde {\mathcal A}^{-1}$ and define $\widetilde {\mathcal B}_1:= \omega_{\widetilde {\mathcal Y}_1^{(r)}/\widetilde {\mathcal X}_1}\otimes  \tilde f^{(r)*}_1\tilde {\mathcal A}_1^{-1}$, where $\tilde {\mathcal A}_1:= \mu_1^*\tilde {\mathcal A}$. 
 Under the identifications  of \ref{P:PTW18-PA.4-2}, we have   that  
 $  \widetilde  {\mathcal B}_1|_{\widetilde {\mathcal Y}_{\widetilde {\mathcal U}_1}}=\widetilde {\mathcal B}|_{\widetilde {\mathcal Y}_{\mathcal U}}$, and for any section $s\in \mathbb{V}_m\subseteq  H^0(\widetilde{\mathcal{X}}, \tilde{f}_{*}^{(r)}\widetilde{\mathcal B}^{\otimes m})$ (see \Cref{R:PTW18-RA.3} and \eqref{E:VVmdefinition}), there exists a
section $\tilde s\in H^0(\widetilde{\mathcal{X}}_1^\circ, \tilde{f}_{1*}^{(r)}\widetilde{\mathcal B}_1^{\otimes m})$ such that

\begin{equation}\label{E:P:PTW18-PA.4-3}
 \tilde s|_{\widetilde {\mathcal X}_1^\circ-\mu_1^{-1}\widetilde {\mathcal S}}=\tilde \mu_1^*s|_{\widetilde {\mathcal X}_1^\circ-\mu_1^{-1}\widetilde {\mathcal S}}.
 \end{equation}

\end{enumerate}
Moreover,  letting $\tilde {\mathbf \Delta}_1$ be the reduced snc divisor with support  $\mu_1^{-1}\tilde {\mathbf \Delta}$,  we have that $\tilde {\mathcal A}_1(-\tilde {\mathbf \Delta}_1)$ is big.

\end{pro}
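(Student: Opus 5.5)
The plan is to follow \cite[Prop.~A.4]{PTW18} and make the construction on the stack side. The only genuinely new ingredient is comparing relative pluricanonical sections through the mild model. Define $\widetilde{\mathcal Y}^{(r)}_1$ as a strong log resolution of the main component of $\widetilde{\mathcal X}_1\times_{\widetilde{\mathcal X}}\widetilde{\mathcal Y}^{(r)}$. Choose it to be an isomorphism over the locus where this fibered product is already smooth, which contains the preimage of $\widetilde{\mathcal U}_1$. Since $\mu_1$ is an isomorphism over $\widetilde{\mathcal U}$, properties (a) and (b) follow at once. Schematicity and projectivity are preserved under base change and composition with blow-ups, and the coarse moduli space stays projective because $\mu_1$ is a blow-up. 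The identification $\widetilde{\mathcal B}_1|=\widetilde{\mathcal B}|$ over $\widetilde{\mathcal U}_1$ is then tautological. For the final bigness claim: $\tilde{\mathcal A}_1(-\tilde{\mathbf\Delta}_1)$ differs from $\mu_1^*(\tilde{\mathcal A}(-\tilde{\mathbf\Delta}))$ by the effective divisor $\mu_1^*\tilde{\mathbf\Delta}-\tilde{\mathbf\Delta}_1$. Pullbacks of big bundles under birational morphisms are big, and big plus effective is big; this can be checked on coarse spaces using \cite[Lem.~2.5]{CMZpositivity}.

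The main point is (c), the extension of sections. Let $X'_1$ be the normalization of the main component of $\widetilde{\mathcal X}_1\times_{\widetilde{\mathcal X}}X'$, with finite map $\tau_1:X'_1\to\widetilde{\mathcal X}_1$. Because $\mu_1^{-1}\mathbf\Delta_\tau$ is normal crossings, I would apply Kawamata's covering trick \Cref{C:KawCovTrick} together with a finite flat cover by a variety. This replaces $X'_1$ by a smooth variety $X''_1$ that is finite over $\widetilde{\mathcal X}_1$, flat outside a codimension $2$ closed substack $\mathbf T$ (the non-flat locus of $\tau_1$ together with the singular locus of the relevant fibered products), and that maps to $X'$. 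Now pull back the mild family $g'^r:Z'^r\to X'$ to $X''_1$. Mildness is stable under such base change \cite[Lem.~2.2(ii)]{VZ03}, and the relative dualizing sheaf is compatible with it (as in \eqref{check-Kollar-condition}, via \cite[2.68.2]{kollar_families}). So a section $t$ of $g'^r_*\omega^{\otimes m}_{Z'^r/X'}\otimes\tau^*\tilde{\mathcal A}^{-m}$ mapping to $s\in\mathbb V_m$ pulls back to a section of the analogous sheaf on $X''_1$. By \Cref{L:VZ03Cor2.4}\ref{L:VZ03Cor2.4ii} and \Cref{L:LV3.2} applied over $\widetilde{\mathcal X}_1^\circ$, this sheaf injects into $\tau_1''^*\tilde f^{(r)}_{1*}\widetilde{\mathcal B}_1^{\otimes m}$. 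Pushing forward along $X''_1\to\widetilde{\mathcal X}_1$ and composing with the trace gives a section $\tilde s$ over $\widetilde{\mathcal X}_1^\circ$, after normalizing by the degree so that traces reproduce the original section.

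Next, verify \eqref{E:P:PTW18-PA.4-3}. Away from $\mu_1^{-1}\widetilde{\mathcal S}$, the map $\mu_1$ is an isomorphism and the covers $X''_1$ and $X'$ agree up to étale pieces. So the construction of $\tilde s$ agrees with the construction of $s$ in \eqref{E:VVmdefinition}, which used trace along $\tau$. Both traces are computed from the same section of the mild family, so they coincide there. This step needs only the commutativity of trace with flat base change, which holds étale locally.

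I expect the main obstacle to be the construction of $X''_1$. It must admit a map to $X'$ compatible with the mild family, while staying smooth and flat in codimension one over $\widetilde{\mathcal X}_1$. I would first try forming the root stack along the components of $\mu_1^{-1}(\mathbf\Delta_\tau)$ as in \Cref{C:KawCovTrick}, then take a finite flat cover by a variety étale over the generic points of the boundary (\S\ref{S:DM-intro}). The codimension two set $\mathbf T$ absorbs whatever failure of flatness remains.
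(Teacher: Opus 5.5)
Your proposal is correct and has the same skeleton as the paper's proof: resolve the pulled-back family, pull the mild family back to a smooth cover of $\widetilde{\mathcal X}_1$ that maps to $X'$, then push the pulled-back lift $s'$ through the comparison map of \Cref{L:LV3.2}/\Cref{L:VZ03Clm4.2}\ref{L:VZ03Clm4.2b} and the trace. Where you differ is in how the cover is built.

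The paper simply lets $X_1'$ be a desingularization of the normalization $\widetilde X_1''$ of the main component of $X'\times_{\widetilde{\mathcal X}}\widetilde{\mathcal X}_1$. Then $\tilde\mu':X_1'\to X'$ is birational, and it is an isomorphism over $\widetilde{\mathcal X}-\widetilde{\mathcal S}$. Moreover, $\tau_1$ is finite and flat off the image $\mathbf T$ of the singular locus of the normal variety $\widetilde X_1''$, which has codimension at least $2$ (by miracle flatness). So the obstacle you single out never arises. No covering trick is needed, $\tilde s:=\eta(\tilde\mu'^*s')$ involves no division by a degree, and \eqref{E:P:PTW18-PA.4-3} holds because the two constructions literally coincide over $\widetilde{\mathcal X}_1^\circ-\mu_1^{-1}\widetilde{\mathcal S}$.

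Your route through \Cref{C:KawCovTrick} and a finite flat cover by a variety also works. This is where the normal crossings hypothesis on $\mu_1^{-1}\mathbf\Delta_\tau$ enters, and it buys you a smooth $X_1''$ that is finite and flat over all of $\widetilde{\mathcal X}_1$. The price is that the comparison map $h:X_1''\to X'$ is no longer birational. It also need not be \'etale away from $\mu_1^{-1}\widetilde{\mathcal S}$, since the cover of $\mathcal Z'$ by a variety may ramify in the interior. So ``agree up to \'etale pieces'' is not the right justification.

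What actually proves \eqref{E:P:PTW18-PA.4-3} in your setting is:
\begin{enumerate}
\item transitivity of trace;
\item $\operatorname{tr}_h(h^*t)=\deg(h)\,t$ for $h$ finite flat over that locus;
\item $h^*$ of the old inclusion agrees there with your new inclusion, since both are maps into a torsion-free sheaf that coincide generically.
\end{enumerate}

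Your arguments for (a), (b) and the bigness of $\tilde{\mathcal A}_1(-\tilde{\mathbf\Delta}_1)$ (using $\mu_1^*\tilde{\mathbf\Delta}\ge\tilde{\mathbf\Delta}_1$) are correct, and they spell out points the paper leaves implicit.
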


\begin{proof}

Let $\mu_1: \widetilde{\mathcal X}_1\to \widetilde{\mathcal{X}}$
be the strong log resolution of the pair $(\widetilde {\mathcal X},\tilde {\mathbf \Delta}+\mathcal S)$ as in the statement of \Cref{P:PTW18-PA.4}.  
Consider the diagram \eqref{E:VZS2diag-2-proof(r)}, and pull back all but the rightmost morphisms in that diagram via $\mu_1$, modifying the diagram as follows: Set $\widetilde{ {X}}_1''$ to be the normalization of the main component of $X'\times_{\widetilde{\mathcal{X}}} \widetilde{ \mathcal{X}}_1$ and $\widetilde\tau'\colon \widetilde{{X}}''_1\to \widetilde{\mathcal{X}}_1$ the induced finite map. We precompose the map $\widetilde{{X}}_1''\rightarrow X'$ with a desingularization $\mu' \colon  X'_1\to \widetilde X''_1$, and get a birational map $\widetilde\mu'\colon X'_1\to X'$.  We then take $\widetilde \Y_1^{(r)}$ to be a desingularization of the main component of 
$\widetilde \Y^{(r)}\times_{\widetilde\X}\widetilde \X_1$, so that the induced morphism $\tilde f^{(r)}\colon \widetilde \Y_1^{(r)}\to \widetilde \X_1$ is a birational model of $f^{(r)}$. 

We obtain a mild reduction type diagram for the new family 
$\tilde f^{(r)}_1$:
\begin{equation}\label{E:MRZ1'r}
\begin{tikzcd}
Z'^r_1 \arrow[d, "g'^{r}_1"] & Y''^{(r)}_1 \arrow[r, "\rho^{(r)}_1"] \arrow[d, "f''^{(r)}_1"] \arrow[l, "\delta^{(r)}_1"'] & Z^{(r)}_1 \arrow[r, "\sigma^{(r)}_1"] \arrow[d, "g^{(r)}_1"] & Y'^{(r)}_1 \arrow[r, "\tau'^{(r)}_1"] \arrow[d, "f'^{(r)}_1"] & \widetilde{\Y}^{(r)}_1 \arrow[d, "\tilde f^{(r)}_1"] \\
X'_1 \arrow[r, "="]          & X'_1 \arrow[r, "="]                                                                         & X'_1 \arrow[r, "="]                                          & X'_1 \arrow[r, "\tau_1"]                                      & \widetilde \X_1                                         
\end{tikzcd}
\end{equation}
where $\tau_1=\widetilde\tau'\circ \mu'$, is a generically finite morphism, $Z'_1=Z'\times_{X'} X_1'$ and $ g'^r_1$ is the induced mild morphism, 
and $ \sigma_1^{(r)}$, $\rho_1^{(r)}$ and $\delta_1^{(r)}$ are similar to those in the mild reduction diagram for $\tilde f^{(r)}$. 
One important caveat here, why we are \emph{not} calling \eqref{E:MRZ1'r} a mild reduction diagram,  is that $\tau_1$ is only \emph{generically} finite.  
 From the construction,  we can conclude  that there  exists a closed substack $\mathbf T\subseteq \widetilde {\mathcal X}_1$ of codimension at least $2$ 
such that $\tau_1$ is finite and flat over ${\widetilde {\mathcal X}_1 - \mathbf T}$. 
For later reference, recalling that  $\widetilde {\mathcal X}^\circ_1=\widetilde {\mathcal X}_1-\mathbf T$, let $X_1'^\circ:=\tau_1^{-1}\widetilde {\mathcal X}_1^\circ\subseteq X_1'$.

The first consequence we draw from all of this is that we have a Cartesian diagram 
\[\begin{tikzcd}
Z'^r_1   \arrow[r, "\tilde\nu'"] \arrow[d, "g'^r_1"'] & Z'^r \arrow[d, "g'^r"'] \\
X_1' \arrow[r, "\tilde\mu'"]                         & X',                     
\end{tikzcd}
\]
and so, by the mildness of the vertical morphisms in the diagram above, and  arguing as in the proof of \eqref{check-Kollar-condition}, we have an isomorphism 

\begin{equation}\label{Comparison-pull back-of-mild'}
\widetilde\nu'^*\omega^{\otimes m}_{Z'^r/X'} \cong \omega^{\otimes m}_{ Z'^r_1/ X_1'}.
\end{equation}

The second consequence is that, from \Cref{L:VZ03Clm4.2}\ref{L:VZ03Clm4.2b},  there is an inclusion  
\begin{equation}\label{E:L:PTWClmA.3-proof'}
g^{(r)}_{1*}\omega ^{\otimes m}_{Z_1'^{(r)}/X_1'}|_{X_1'^\circ}\hookrightarrow \tau_1^*\tilde f^{(r)}_{1*}\omega^{\otimes m}_{\widetilde {\mathcal Y}^{(r)}_1/\widetilde {\mathcal X}_1}|_{X_1'^\circ}
\end{equation}
that agrees with \eqref{E:L:VZ03Clm4.2b-proof} over $\tau^{-1}(\widetilde {\mathcal X}-\mathcal S)$, as $\mu_1$ is the identity over $\widetilde {\mathcal X}-\mathcal S$.

Now, by the definition of the space  $\mathbb{V}_m$, the section $s\in H^0(\widetilde {\mathcal X},\tilde f^{(r)}_*\omega^{\otimes m}_{\widetilde {\mathcal Y}^{(r)}/\widetilde {\mathcal X}}\otimes \tilde{\mathcal A}^{\otimes -m})$ lifts to some section  $s'\in H^0(X',g'^r_*\omega_{ Z'^r/ X'}^{\otimes m} \otimes \tau^*\tilde{\mathcal A}^{\otimes -m}) $, and by  \eqref{Comparison-pull back-of-mild'}, this lifts to  a section 
\begin{align*}
\tilde{s}':=\tilde{\mu}'^*s'&\in H^0\big(X'_1,g'^r_{1*}\omega^{\otimes m}_{Z'^r/X'_1}\otimes\tilde\mu'^* \tau^*\tilde{\mathcal A}^{\otimes -m}\big)\\
&=H^0\big(X'_1,g'^r_{1*}\omega^{\otimes m}_{Z'^r/X'_1}\otimes \tau_1^*\mu_1^*\tilde{\mathcal A}^{\otimes -m}\big)\\
&= H^0(\widetilde {\mathcal X}_1, \tau_{1*} g'^r_{1*}\omega^{\otimes m}_{ Z_1'^r/{X}_1'}\otimes \mu_1^*\tilde{\mathcal A}^{\otimes-m}).
\end{align*}
At the same time, applying $\tau_{1*}$ to \eqref{E:L:PTWClmA.3-proof'}, and then utilizing the  
trace map for $\tau_1$,  we have a morphism 
\[ \tau_{1*} g'^r_{1*}\omega^{\otimes m}_{ Z_1'^r/{X}_1'}|_{\widetilde {\mathcal X}_1^\circ}\longrightarrow \tau_{1*}\tau_1^*\tilde f_{1*}^{(r)}\omega^{\otimes m}_{\widetilde{\Y}^{(r)}_1/\widetilde{\X}_1}|_{\widetilde {\mathcal X}_1^\circ}\longrightarrow \tilde f_{1*}^{(r)}\omega^{\otimes m}_{\widetilde{\Y}^{(r)}_1/\widetilde{\X}_1}|_{\widetilde {\mathcal X}_1^\circ}.
\]
Finally, tensoring by $-\otimes \mu_1^*\tilde{\mathcal A}^{\otimes -m}$, we conclude the existence of a morphism
\[\eta: \tau_{1*} g'^r_{1*}\omega^{\otimes m}_{ Z_1'^r/{X}_1'}\otimes \mu_1^*\tilde{\mathcal A}^{\otimes-m}|_{\widetilde {\mathcal X}_1^\circ}\longrightarrow  \tilde f_{1*}^{(r)}\omega^{\otimes m}_{\widetilde{\Y}^{(r)}_1/\widetilde{\X}_1}\otimes \mu_1^*\tilde{\mathcal A}^{\otimes-m}|_{\widetilde {\mathcal X}_1^\circ}
\]
and define 
\[
\tilde s:=\eta(\tilde s'|_{\widetilde {\mathcal X}_1^\circ})\in H^0\big(\widetilde{\mathcal X}^\circ_1,\tilde f_{1*}^{(r)}\omega^{\otimes m}_{\widetilde{\Y}^{(r)}_1/\widetilde{\X}_1}\otimes \mu_1^*\tilde{\mathcal A}^{\otimes-m}|_{\widetilde {\mathcal X}_1^\circ}\big).
\]
Since \eqref{E:L:VZ03Clm4.2b-proof} is an isomorphism over $U'=\tau^{-1}(\widetilde {\mathcal X}-\tilde {\mathbf \Delta})$, and \eqref{E:L:VZ03Clm4.2b-proof} and \eqref{E:L:PTWClmA.3-proof'} agree over $\tau^{-1}(\widetilde {\mathcal X}-\mathcal S)$, we have that

$$\tilde s|_{\widetilde {\mathcal X}_1^\circ-\mu_1^{-1}\mathcal S}=\eta(\tilde s')|_{\widetilde {\mathcal X}_1^\circ-\mu_1^{-1}\mathcal S}=\mu_1^*s|_{\widetilde {\mathcal X}_1^\circ-\mu_1^{-1}\mathcal S}$$
\end{proof}

\section{Proof of the Main Results}\label{S:ProofMain}

\begin{proof}[Proof of \Cref{T:main-goodMM}]
We start with a family $f:\mathcal Y\to \mathcal X$ as in  \Cref{T:main-goodMM}. 
Let $\widetilde {\mathcal X}\to \mathcal X$ be a log resolution of  $(\mathcal X, \mathbf \Delta)$, and let $\widetilde {\mathcal Y}$ be a strong log-resolution of singularities of the main component of $\widetilde {\mathcal X}\times _{\mathcal X}\mathcal Y$, so that we have a schematic projective morphism $\tilde f:\widetilde {\mathcal Y}\to \widetilde {\mathcal X}$ of smooth proper integral DM stacks over $\mathbb C$ with projective coarse moduli space.  Let $\tilde {\mathbf \Delta}\subseteq \widetilde {\mathcal X}$ be the reduced pre-image of $\mathbf \Delta$.  
Let $\mathcal U=\mathcal X-\mathbf \Delta$ be the complement of the divisor $\mathbf \Delta$; since the morphism $\widetilde {\mathcal X}\to \mathcal X$ is an isomorphism on $\mathcal U$, and $f|_{\mathcal U}:\mathcal Y|_{\mathcal U}\to \mathcal U$ is smooth  (we assumed $\mathbf \Delta_f\subseteq \mathbf \Delta$), 
 it follows that $\mathbf \Delta_{\tilde f}\subseteq \widetilde {\mathcal X}$ is contained in $\tilde {\mathbf \Delta}$.  It follows from \cite[Lem.~1.9]{CMZlog_general_type_KSBA}  that in order to prove that $K_{\mathcal X}+\mathbf \Delta$ is big,  it suffices to prove that $K_{\widetilde {\mathcal X}}+\tilde {\mathbf \Delta}$ is big.  In other words, it suffices to prove  \Cref{T:main-goodMM} under the assumption that $\mathbf \Delta$ is an snc divisor.  Moreover, going forward we will continue to use \cite[Lem.~1.9]{CMZlog_general_type_KSBA}  without mention to replace $\mathcal X$ with a blow-up and $\mathbf \Delta$ with the strict transform of its pre-image.

Now, due to our assumptions on the geometric generic fibers of $f:\mathcal Y\to \mathcal X$,  together with our assumption on the maximal variation of $f$,  the family $f:\mathcal Y\to \mathcal X$ satisfies the hypotheses of \Cref{P:PTW18-PA.1}.   The assertions of \Cref{P:PTW18-PA.1} imply that, after a further blow-up of $\mathcal X$, we may assume that  $f:\mathcal Y\to \mathcal X$ and $\mathbf \Delta$ satisfy the hypotheses of \cite[\S 4.1.1--2]{CMZlog_general_type_KSBA}. \Cref{P:PTW18-PA.4} implies that $f:\mathcal Y\to \mathcal X$ satisfies \cite[\S 4.1.3]{CMZlog_general_type_KSBA}, as well; see especially 
 \cite[Rem.~4.1]{CMZlog_general_type_KSBA} regarding \cite[\S 4.1.3(2)]{CMZlog_general_type_KSBA}, and also observe that, in the notation of   \cite[\S 4.1.3]{CMZlog_general_type_KSBA}, one may take the divisor $\mathcal S'$ to be any divisor containing both $\mathcal S$ as well as the branch divisor $\mathbf \Delta_\tau$, with $\mathbf \Delta_\tau$ as described in \Cref{P:PTW18-PA.4}.   Note that in addition to the family $f:\mathcal Y\to \mathcal X$ and the snc divisor $\mathbf \Delta$, we now have  the additional data of a  line bundle $\mathcal A$ on $\mathcal X$ such that $\mathcal A(-\mathbf \Delta)$ is big.

Now that we have $f:\mathcal Y\to \mathcal X$ satisfying the conditions in 
 \cite[\S 4.1.1--3]{CMZlog_general_type_KSBA}, the result 
 \cite[Thm.~4.13]{CMZlog_general_type_KSBA} 
implies that, after again replacing $\mathcal X$ with a further blow-up,  there is a closed substack $ {\mathbf T}\subseteq  {\mathcal X}$ of codimension at least $2$, so that   setting ${\mathcal X}^\circ:= {\mathcal X}- {\mathbf T}$,        there  is a  Hodge module ${\mathsf M}$ on $ {\mathcal X}$ with strict support $ {\mathcal X}$ and a graded ${\mathscr{A}}_{{\XX}^\circ}$-module $ {\m{G}}^\circ_{\bullet}$ that is coherent over $\mathscr{A}_{ {\XX}^\circ}^\circ$, so that ${\mathsf M}^\circ := {\mathsf M}|_{ {\mathcal X}^\circ}$ and  ${\m{G}}^\circ_{\bullet}$ satisfy  \cite[Thm.~4.3(b)--(e)]{CMZlog_general_type_KSBA} 
   on $ {\mathcal X}^\circ$ with $\mathcal A$ replaced with ${\mathcal A}^\circ:={\mathcal A}|_{ {\mathcal X}^\circ}$.   Moreover,  $\mathcal A(-\mathbf \Delta)$ is big, and  
 letting ${\mathcal S}$ be the singular support of $ {\mathsf M}$,  the divisor $\mathbf E:={\mathbf \Delta}\cup {\mathcal S}$ is snc.

We may therefore employ  
 \cite[Thm.~5.1]{CMZlog_general_type_KSBA}, 
and obtain graded sheaves $\mathcal F_\bullet\subseteq  \mathcal E_\bullet$ on $ {\mathcal X}$, with $\mathcal F_\bullet$ a graded sub-module of a  graded logarithmic Higgs bundle $\mathcal E_\bullet$ that extends a variation of Hodge structure on ${\mathcal X}-{\mathbf E}$, with the property that $\theta_\bullet (\mathcal F_\bullet)\subseteq \Omega^1_{{\mathcal X}}(\log {\mathbf \Delta}) \otimes \mathcal F_{\bullet +1}$, and $\mathcal F_0$ is a big line bundle.  This data allows us to use \cite[Thm.~B]{CMZpositivity} to obtain a coherent sheaf $\mathcal H$ on ${\mathcal X}$ with big determinant, and an inclusion 
$$
\xymatrix{
\mathcal H \ar@{^(->}[r]& \left(\Omega^1_{ {\mathcal X}}(\log  {\mathbf \Delta})\right)^{\otimes s}
}
$$
for some positive integer $s$.
Finally, from \cite[Cor.~B]{CMZfoliations}, such an inclusion implies $K_{ {\mathcal X}}+ {\mathbf \Delta}$ is big, completing the proof. 
\end{proof}

\begin{proof}[Proof of \Cref{main-coarseMS-cor}]
For any divisor $H$, the divisor with superscript $H^{st}$ will mean the strict transform of the divisor $H$ under the birational morphism in consideration.  
As in the start of the proof of \Cref{T:main-goodMM}, let  $\mu:\widetilde {\mathcal X}\to \mathcal X$ be a log resolution of  $(\mathcal X, \mathbf \Delta)$, and let $\widetilde {\mathcal Y}$ be a strong log-resolution of singularities of the main component of $\widetilde {\mathcal X}\times _{\mathcal X}\mathcal Y$. 
Set  $\mathbf\Delta':=\mathbf\Delta^{st}+\operatorname{Ex}(\mu)$. 
Applying \Cref{T:main-goodMM}, we obtain that $K_{\X'}+\mathbf\Delta'$ is big. At the level of the coarse moduli space, this implies that $K_{X'}+R'+\Delta'$ is big; from this we obtain  that $(X',R' +\Delta')$ is log general type  since $(X',R' +\Delta')$ is a log canonical pair. Since our resolutions are schematic, we do not change the order of the stabilizers along the irreducible components of the boundary divisors. Consequently, we have   $R'+\Delta'\leq R^{st}+\Delta^{st}+ \operatorname{Ex}(\mu)$, where here, overloading the notation, $\mu$ denotes the morphism of coarse moduli spaces.  This implies that $(X,R+\Delta)$ is of log general type.
\end{proof}

\section{Examples}\label{S:examples}

In this section we apply \Cref{T:main-goodMM} and \Cref{main-coarseMS-cor} to some standard examples, and compare the outcome to results in the literature. 

\subsection{Moduli of curves}\label{S:MgApplication}
Consider $\overline{\mathcal M}_g$, the moduli stack of Deligne--Mumford stable curves of genus $g\ge 2$.  Applying   \Cref{T:main-goodMM} (for $g\ge 3$,  where the generic stabilizer is trivial) or \cite[Thm.~A]{CMZlog_general_type_KSBA},  we obtain the well-known result that  $K_{\overline{\mathcal M}_g}+\boldsymbol \delta$ is big, where $\boldsymbol \delta=\boldsymbol \delta_0+\boldsymbol \delta_1+\cdots+\boldsymbol \delta_{\lfloor g/2\rfloor}$ is the total boundary divisor; recall that \cite[Thm.~(1.3)]{cornalba_harris} implies the stronger statement that $K_{\overline{\mathcal M}_g}+\boldsymbol \delta$ is in fact the pull back of an ample $\mathbb Q$-line bundle on the coarse moduli space $\overline M_g$.  

In fact, more recently, it has been shown that  $K_{\overline{\mathcal{M}}_g} + \boldsymbol \delta_0$ is big for $g\geq 2$  \cite{CLV16}.
The argument, using the terminology of \cite{CLV16},  is as follows: The divisor $\mathbf D:=K_{\overline{\mathcal{M}}_g} + \boldsymbol \delta_0 = 13\boldsymbol \lambda - \boldsymbol \delta_0 - 2\sum_{i=1}^{\lfloor g/2 \rfloor} \boldsymbol \delta_i $ satisfies the conditions of being a strict M-divisor \cite[p.533]{CLV16}  as soon as $g\geq 2$.   The result \cite[Thm.~1(b)]{CLV16} implies, for the augmented base locus $\mathbf B_+(\mathbf D)$, that  $\mathbf B_+(\mathbf D) \neq \overline{\mathcal{M}}_g$, and therefore that $\mathbf D$ is big \cite[p.534]{CLV16}.

We can also use \Cref{T:main-goodMM} to show that $K_{\overline{\mathcal M}_g}+\boldsymbol \delta_0$ is big:

\begin{cor}[{\cite{CLV16}}]\label{C:K+delta0}
For $g\ge 2$ one has $K_{\overline{\mathcal M}_g}+\boldsymbol \delta_0$ is big.
\end{cor}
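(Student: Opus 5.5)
The strategy is to realize $\overline{\mathcal M}_g$, or a birational modification of it, as the base of a family of varieties of general type with maximal variation whose discriminant is contained in $\boldsymbol\delta_0$. Then I would apply \Cref{T:main-goodMM} with $\mathbf\Delta=\boldsymbol\delta_0$, or \cite[Thm.~A]{CMZlog_general_type_KSBA} when $g=2$, where the generic stabilizer is nontrivial. The natural candidate is a family whose fiber over a stable curve $C$ remains smooth as long as $C$ has no nonseparating nodes. A stable curve of compact type has an abelian variety $\operatorname{Pic}^0(C)$, which is not of general type, so the universal curve itself is not suitable: it is singular over all of $\boldsymbol\delta$. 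Instead I would use a symmetric product or a Jacobian-type construction. My concrete first choice is the family of $d$-th symmetric products $C^{(d)}$, with $d$ in the range where $C^{(d)}$ is of general type. For compact type curves, though, the singularities of the universal symmetric product over $\boldsymbol\delta_i$, $i\ge1$, still appear.

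A cleaner route, which I would pursue instead, exploits the fact that the discriminant only needs to lie in $\mathbf\Delta$ after allowing the base to be modified. Let $\mathcal X\to\overline{\mathcal M}_g$ be a resolution of the stack of compact-type-compactified data, or simply use the Torelli map to $\overline{\mathcal A}_g$. Over $\overline{\mathcal M}_g-\boldsymbol\delta_0$, the Jacobian family extends as a smooth abelian scheme, since compact type curves have abelian Jacobians. I would take the family $f:\mathcal Y\to\overline{\mathcal M}_g$ to be a resolution of a compactification of the universal theta divisor, or of a suitable cyclic cover of the universal Jacobian branched along a multiple of the theta divisor. Its fibers are smooth varieties of general type over the complement of $\boldsymbol\delta_0$, after choosing the branch divisor generically, and its discriminant is therefore contained in $\boldsymbol\delta_0$.

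Next I would verify maximal variation. The Torelli map is injective on points of $M_g$, so a family built functorially from $(\operatorname{Jac}C,\Theta)$ has $\operatorname{Var}=\dim\overline{\mathcal M}_g=3g-3$. For $g\ge3$ the generic stabilizer is trivial, so \Cref{T:main-goodMM} applies directly. For $g=2$, I would appeal to the stacky variant in \cite[Thm.~A]{CMZlog_general_type_KSBA}, or work on a level-structure cover $[V/G]$, where \cite[Thm.~A]{PS17} applies on $V$ and bigness descends.

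The main obstacle is constructing the family so that all three conditions hold simultaneously: it must be schematic projective over the stack, its fibers must be of general type, and it must have no discriminant along $\boldsymbol\delta_i$ for $i\ge1$. Making the branch divisor of the cyclic cover both smooth on every fiber over the compact-type locus and defined functorially over the stack is the delicate step. If it fails, the fallback is the weaker requirement that the discriminant of some family pulled back to a modification lie in the preimage of $\boldsymbol\delta_0$. That weaker statement suffices by \cite[Lem.~1.9]{CMZlog_general_type_KSBA}.
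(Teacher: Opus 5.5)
Your proposal has a genuine gap: the family you would actually use is never constructed, and the candidates you name do not have discriminant contained in $\boldsymbol\delta_0$. The universal theta divisor fails outright. By the Riemann singularity theorem, $\operatorname{Sing}\Theta_C=W^1_{g-1}(C)$. This locus is nonempty for every smooth curve of genus $g\ge 4$, and for $g=3$ it is nonempty exactly along the hyperelliptic locus. So the discriminant is all of $\overline{\mathcal M}_g$ when $g\ge 4$, and contains $\overline{\mathbf h}$ when $g=3$. Over $\boldsymbol\delta_i$ with $i\ge 1$, the theta divisor of $\prod_j\operatorname{Jac}(C_j)$ is reducible with intersecting components, hence singular.

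For the cyclic cover branched along a member of $|m\Theta|$, choosing the branch divisor generically fiber by fiber does not globalize. A relative branch divisor is a section of the projective bundle of linear systems over a base of dimension $3g-3$. The pullback of the discriminant hypersurface under that section is, if nonempty, a divisor on the base. Nothing in your construction forces it into $\boldsymbol\delta_0$, so you pick up extra discriminant components. The fallback via \cite[Lem.~1.9]{CMZlog_general_type_KSBA} does not help: it only lets you replace the base by a modification, and cannot absorb discriminant divisors lying over loci outside $\boldsymbol\delta_0$.

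The missing idea is that \Cref{T:main-goodMM} does not require fibers of general type. Hypothesis (1) only asks that the geometric generic fiber admit a good minimal model, so $K$-trivial fibers such as abelian varieties are allowed. The family you discarded is exactly the right one, and you even observed it is smooth off $\boldsymbol\delta_0$. For $g\ge 3$, the paper pulls back Alexeev's universal family of stable semi-abelic pairs along the Torelli morphism $\overline{\mathcal M}_g\to\bar{\mathcal A}_g^{Vor}$. Over $\overline{\mathcal M}_g-\boldsymbol\delta_0$ the fibers are the smooth Jacobians $\prod_j\operatorname{Jac}(C_j)$, so the discriminant lies in $\boldsymbol\delta_0$. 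The variation is maximal by Torelli and the generic stabilizer is trivial, so \Cref{T:main-goodMM} applies with $\mathbf\Delta=\boldsymbol\delta_0$.

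Your treatment of $g=2$ is also insufficient. \cite[Thm.~A]{CMZlog_general_type_KSBA}, applied to stable curves or to semi-abelic pairs, only gives bigness of $K$ plus a divisor containing the pair discriminant. That divisor includes $\boldsymbol\delta_1$, since $\mathbf N_0=\boldsymbol\delta_1$ when $g=2$, so it yields only $K_{\overline{\mathcal M}_2}+\boldsymbol\delta$. The paper instead computes directly. Using $10\boldsymbol\lambda=\boldsymbol\delta_0+2\boldsymbol\delta_1$, one gets $K_{\overline{\mathcal M}_2}+\boldsymbol\delta_0=13\boldsymbol\lambda-\boldsymbol\delta_0-2\boldsymbol\delta_1=3\boldsymbol\lambda$, which is big. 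Your level-structure route could plausibly work for $g=2$, but only with the abelian family, and it would also require a smooth projective $V$ with $\overline{\mathcal M}_2\cong[V/G]$.
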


\begin{proof}
For all $g\ge 2$, we can consider the Torelli map $\overline{\mathcal M}_g\to \bar {\mathcal A}^{Vor}_g$, to the second Voronoi compactification.  For $g\ge 3$, the generic point of $\overline{\mathcal M}_g$ has trivial stabilizers, and so, if we pull back Alexeev's universal family of stable semi-abelic pairs \cite{alexeev},   we may apply  \Cref{T:main-goodMM}, using that the geometric generic fiber is $K$-trivial, and taking $\mathbf \Delta = \boldsymbol \delta_0$.  It follows that $K_{\overline{\mathcal M}_g}+\boldsymbol \delta_0$ is big.

For $g=2$, one can check directly.  Indeed, since $K_{\overline{\mathcal M}_g}=13\boldsymbol \lambda -2\boldsymbol \delta$, and on $\overline{\mathcal M}_2$ one has $10\boldsymbol \lambda =\boldsymbol \delta_0+2\boldsymbol \delta_1$ (e.g., \cite[Exe.~3.143]{harris_morrison}), one has that $K_{\overline{\mathcal M}_2}+\boldsymbol \delta_0=13\boldsymbol \lambda -\boldsymbol \delta_0-2\boldsymbol \delta_1=3\boldsymbol \lambda$, which is big.  Alternatively, using the isomorphism $\overline {\mathcal M}_2\cong \bar{\mathcal A}_2^{Vor}$, one has $K_{\overline{\mathcal M}_2}+\boldsymbol \delta_0=K_{\bar{\mathcal A}_2^{Vor}}+\mathbf \Delta^{tor}=3\boldsymbol\lambda$, where $\mathbf \Delta^{tor}$ is the toroidal boundary divisor.
\end{proof}

\begin{rem}
For $g=3$, it is also elementary to check directly that $K_{\overline{\mathcal M}_3}+\boldsymbol \delta_0$ is big.  Here we use that the divisor $\overline {\mathbf h}$, the closure of the hyperelliptic locus,  has class $\overline {\mathbf h} = 9\boldsymbol \lambda -\boldsymbol \delta_0 -3\boldsymbol \delta_1$ (e.g., \cite[p.188]{harris_morrison}).  Then we have $K_{\overline{\mathcal M}_3}+\boldsymbol \delta_0=13\boldsymbol \lambda -\boldsymbol \delta_0-2\boldsymbol \delta_1=\overline{\mathbf  h} +4\boldsymbol \lambda +\boldsymbol \delta_1$, which is big plus effective, and therefore big.  
\end{rem}

For $g\ge 4$, the ramification divisor for $\pi:\overline {\mathcal M}_g\to \overline M_g$ is $\boldsymbol \delta_1$, with integral branch divisor $\Delta_1$; here we have $\pi^*\frac{1}{2}\Delta_1=\boldsymbol \delta_1$.   We let $\Delta_0$ be the integral divisor on $\overline M_g$ such that $\pi^*\Delta_0=\boldsymbol \delta_0$.  \Cref{C:K+delta0} then gives us that $K_{\overline M_g}+\frac{1}{2}\Delta_1+\Delta_0$ is big.
For $g=3$,  the branch divisor also includes the hyperelliptic divisor $H$.   \Cref{C:K+delta0} then gives that $K_{\overline M_3}+\frac{1}{2}H+\frac{1}{2}\Delta_1+\Delta_0$ is big. 
For $g=2$, the branch divisor is $\Delta_1\cup B$, where $B$ is the closure of the locus of bielliptic curves.   \Cref{C:K+delta0} then gives that $K_{\overline M_2}+\frac{1}{2}B+\frac{1}{2}\Delta_1+\Delta_0$ is big.

\subsection{Moduli of marked admissible covers}
We briefly note here that for any smooth DM moduli stack $\overline {\mathcal R}$ with projective coarse moduli space  that admits a generically finite morphism $\overline{\mathcal R}\to\overline {\mathcal M}_{g,n}$ for $g,n\ge 0$, $(g,n)\notin \{(0,0), (0,1),(0,2), (1,0)\}$ (or to any of the KSBA compactifications of the moduli space of marked curves, or, more generally, to any KSBA moduli space), one can apply either    \Cref{T:main-goodMM}  or \cite[Thm.~A]{CMZlog_general_type_KSBA}  to $\overline{\mathcal R}$.  In particular, one can apply the theorems to moduli spaces of admissible covers of curves; we refer the reader to Abramovich--Corti--Vistoli \cite{ACV03} for a discussion of various moduli stacks of curves with covers.   

As a concrete example, consider the moduli space of admissible double covers $\overline{\mathcal R}_g$, compactifying the moduli space of \'etale double covers of smooth projective curves of genus $g\ge 2$.   An application of  \cite[Thm.~A]{CMZlog_general_type_KSBA} implies that $K_{\overline{\mathcal R}_g}+\boldsymbol \delta_{\overline{\mathcal R}_g}$ is big, where $\boldsymbol \delta_{\overline{\mathcal R}_g}$ is the full boundary divisor; see \cite{FL10Prym}  for a description of the boundary divisor.  For $g\ge 4$, we also refer the reader to  \cite[Prop.~6.6]{FL10Prym}  for a description of the ramification divisor of the stack, which lies over $\boldsymbol \delta_1$ of $\overline{\mathcal M}_g$.  
As the ramification divisor is contained in the discriminant for $g\ge 4$, we obtain that $K_{\overline R_g}+\Delta_{\overline{\mathcal R}_g}$ is big, where $\Delta_{\overline{\mathcal R}_g}$ is the full (reduced) boundary divisor on $\overline R_g$. 
Note also  that as the finite morphism  $f:\overline{\mathcal R}_g\to \overline{\mathcal M}_g$ is ramified over the boundary (the ramification divisor lies over $\boldsymbol \delta_0$ in $\overline {\mathcal M}_g$; see \cite[p.763]{FL10Prym}),  we have that $K_{\overline{\mathcal R}_g}+\boldsymbol \delta_{\overline{\mathcal R}_g}=f^*(K_{\overline{\mathcal M}_g} +\boldsymbol \delta)$, so that in fact, $K_{\overline{\mathcal R}_g}+\boldsymbol \delta_{\overline{\mathcal R}_g}$ is the pull back of an ample $\mathbb Q$-line bundle on the coarse moduli space $\overline R_g$.

\subsection{Moduli of abelian varieties}
Here we consider $\bar {\mathcal A}_g^{Vor}$, the second Voronoi compactification of the moduli stack of principally polarized abelian varieties, and view it as (the normalization of)  Alexeev's moduli space of stable semi-abelic pairs \cite{alexeev}.  Let $\bar {\mathcal A}_g^{tor}\to \bar {\mathcal A}_g^{Vor}$ be a toroidal compactification (stack) dominating the second Voronoi compactification, which we may, if we want, assume is smooth (see, e.g., \cite[IV Cor.~2.4]{AMRT10});  
note that for $g \le 4$, one has that $ \bar {\mathcal A}_g^{Vor}$ is smooth \cite[Thm.~1.4(i)]{DSHS15}.
We then consider the pull back to $\bar {\mathcal A}_g^{tor}$ of the universal family over $\bar {\mathcal A}_g^{Vor}$.  

If we assume $\bar {\mathcal A}_g^{tor}$ is smooth, and view it as the base of a family of generically $K$-trivial varieties of maximal variation, we cannot  apply 
\Cref{T:main-goodMM}, as generically there is a $\mathbb Z_2$-automorphism group.  Nevertheless, it is natural to ask if the conclusion of the theorem still holds, namely,  whether $K_{\bar{\mathcal A}_g^{tor}}+\mathbf \Delta^{tor}$ is big, where $\mathbf \Delta^{tor}$ is the toroidal boundary divisor, which parameterizes singular stable semi-abelic pairs (i.e., the pairs $(P,\Theta)$, where $P$ is singular). In fact, whether or not $\bar {\mathcal A}_g^{tor}$ is smooth, using Hirzebruch--Mumford proportionality \cite[Prop.~3.4]{MumfordHMP77}, we have the stronger statement that $K_{\bar {\mathcal A}_g^{tor}}+\mathbf \Delta^{tor}=(g+1)\boldsymbol \lambda$, where $\boldsymbol \lambda$ is the pull back of the Hodge  bundle under the composition $\bar{\mathcal A}_g^{tor}\to \bar A_g^{tor}\to A_g^*$, and  $A_g^*$ is the Satake compactification.  Consequently, we see that  $K_{\bar {\mathcal A}_g^{tor}}+\mathbf \Delta^{tor}$ is nef and big (even the pull back of a semi-ample $\mathbb Q$-line bundle on the coarse moduli space).  

On the other hand, if we assume again that $\bar{\mathcal A}_g^{tor}$ is smooth, and view it as the base of a family of KSBA stable pairs of maximal variation, then if we apply \cite[Thm.~A]{CMZlog_general_type_KSBA},  we obtain that $K_{\bar {\mathcal A}_g^{tor}}+\mathbf \Delta$ is big, where  $\mathbf \Delta = \mathbf \Delta^{tor}+(\mathbf N_0)_{\operatorname{red}}$, and  $\mathbf N_0$ is the Andreotti--Mayer divisor compactifying the locus of ppavs with a singular theta divisor.  
In fact, whether or not $\bar{\mathcal A}_g^{tor}$ is smooth, from Hirzebruch--Mumford proportionality we have that $$K_{\bar {\mathcal A}_g^{tor}}+\mathbf \Delta=K _{\bar {\mathcal A}_g^{tor}}+\mathbf \Delta^{tor}+(\mathbf N_0)_{\operatorname{red}}=(g+1)\boldsymbol \lambda + (\mathbf N_0)_{\operatorname{red}}$$ is big plus effective, and therefore is big.  
 However, in contrast to the previous paragraph, as we will see below, $K_{\bar {\mathcal A}_g^{tor}}+\mathbf \Delta$ is not nef  for $g\ge 3$; in particular, $K_{\bar {\mathcal A}_g^{Vor}}+\mathbf \Delta$ is not nef  for $g\ge 3$. 
 
 \begin{rem}
 For $g=3,4$,  the stack $\bar {\mathcal A}_g^{Vor}$  provides an example of a smooth proper integral DM stack $\mathcal X$ over $\mathbb C$ with projective coarse moduli space, admitting a finite morphism to a KSBA moduli stack, having relative snc discriminant $\mathbf \Delta\subseteq \mathcal X$ a divisor (see \cite[Def.~1.1]{CMZlog_general_type_KSBA}), with the property  that $K_{\mathcal X}+\mathbf \Delta$ is big,  but not nef. 
\end{rem}

To show that $K_{\bar {\mathcal A}_g^{Vor}}+\mathbf \Delta$ is not nef  for $g\ge 3$,  recall from \cite[p.368]{MumKodAg} that $\mathbf N_0=\boldsymbol \theta_{null}+2\mathbf N_0'$, where $\boldsymbol \theta_{null}$  is the divisor compactifying the locus of ppavs with a vanishing theta null (at the generic point of  $\boldsymbol \theta_{null}$, the theta divisor has a unique ordinary double point), and  $\mathbf N_0'$  is the divisor compactifying the locus of ppavs whose theta divisor has exactly two ordinary double points \cite{debarre92}.   Letting $\mathcal A_g'\subseteq \bar{\mathcal A}_g^{tor}$ be Mumford's partial compactification corresponding to rank $1$ torus degenerations, Mumford showed \cite[Cor.~1.6]{MumKodAg} that $\operatorname{Pic}(\mathcal A_g')\otimes_{\mathbb Z}\mathbb Q= \mathbb Q\langle \boldsymbol \lambda, \mathbf D\rangle$, where $\mathbf D\subseteq \mathcal A_g'$ is the boundary divisor, and that on $\mathcal A_g'$  one has  \cite[Thm.~2.10]{MumKodAg}:
\begin{align*}
[\boldsymbol \theta_{\mathrm{null}}]
&=
2^{g-2}(2^g+1)\boldsymbol \lambda
\;-\;
2^{2g-5}\mathbf D,\\
[\mathbf N_0']
&=
\left[
\frac{(g+1)!}{4}
+
\frac{g!}{2}
-
2^{g-3}(2^g+1)
\right]\boldsymbol \lambda
\;-\;
\left[
\frac{(g+1)!}{24}
-
2^{2g-6}
\right]\mathbf D.
\end{align*}  
Recall that for $g=1$ one has that $\boldsymbol \theta_{null}$ is empty, and for   $g\le 3$ one has that $\mathbf N_0'$ is empty; the formulas above also hold for $g$ in this range (for $g=1$, use that $\mathbf D=12\boldsymbol \lambda$).  Note also that for $g=3$, the Torelli map $\overline{\mathcal M}_3\to \bar{\mathcal A}_3^{Vor}$ is ramified with order $2$ along the hyperelliptic divisor $\overline {\mathbf h}$, with branch divisor $\boldsymbol \theta_{null}$, and that this accounts for the factor-of-two difference in the classes; i.e., $\overline{\mathbf h}=9\boldsymbol \lambda -\boldsymbol\delta_0-3\boldsymbol\delta_1$, whereas $\boldsymbol\theta_{null}=18\boldsymbol\lambda - 2\mathbf \Delta^{tor}$.  

From the formulas above, one has 
$$
(K_{\bar {\mathcal A}_g^{tor}}+\mathbf \Delta)|_{\mathcal A_g'}=(K_{\bar {\mathcal A}_g^{tor}}+\mathbf \Delta^{tor}+\boldsymbol \theta_{null}+\mathbf N_0')|_{\mathcal A_g'}=((g+1)\boldsymbol \lambda +\boldsymbol \theta_{null}+\mathbf N_0')|_{\mathcal A_g'}
$$
\[
=
\left[(g+1)+
\frac{(g+1)!}{4}
+\frac{g!}{2}
+2^{g-3}(2^g+1)
\right]\boldsymbol \lambda
-
\left[
\frac{(g+1)!}{24}
+
2^{2g-6}
\right]\mathbf D.
\]
Now consider the test curve $\mathbf T=\mathbb P^1\subseteq \mathcal A_g'$ given by taking the product of a fixed generic ppav of dimension $g-1$ with the compactified Jacobian of a generic pencil of plane cubics.  One can see that $\mathbf D|_{\mathbf T}=\mathcal O_{\mathbb P^1}(12)$ and $\boldsymbol \lambda|_{\mathbf T}= \mathcal O_{\mathbb P^1}(1)$.  Consequently, for any line bundle $\mathcal L$ on $\bar{\mathcal A}_g^{tor}$ with restriction  $\mathcal L|_{\mathcal A_g'}= a\boldsymbol\lambda -b\mathbf D$, then for $\mathcal L$ to be nef, one must have $a\ge 12 b$.  In fact, since $\mathbf D$ is contracted under the morphism to the Satake compactification, one can also deduce that $b\ge 0$; we refer the reader to \cite{HSnef04, HulekNefDiv2000} for more on the nef cone of $\bar{\mathcal A}_g^{tor}$.   For $a\boldsymbol \lambda -b\mathbf D$, we define the slope as $s(a\boldsymbol \lambda -b\mathbf D)=a/b$, and so, translating the nef condition above into a statement about slopes, we have that if $\mathcal L$ is nef, then  $s(\mathcal L|_{\mathcal A_g'})\ge 12$. 

 Now plugging $g=3,4$ into the formula above for $(K_{\bar {\mathcal A}_g^{tor}}+\mathbf \Delta)|_{\mathcal A_g'}$, we have that 
 \begin{align*}
( K_{\bar {\mathcal A}_3^{tor}}+\mathbf \Delta)|_{\mathcal A_3'}&= 22\lambda -2\mathbf D, \\
( K_{\bar {\mathcal A}_4^{tor}}+\mathbf \Delta)|_{\mathcal A_4'}&= 81\lambda -9\mathbf D,
\end{align*}
 so that $K_{\bar {\mathcal A}_g^{tor}}+\mathbf \Delta$ is not nef for $g=3,4$.  With a little work, one can show that  $s(K_{\bar {\mathcal A}_{g+1}^{tor}}+\mathbf \Delta)<s(K_{\bar {\mathcal A}_g^{tor}}+\mathbf \Delta)$ for all $g\ge 4$, and so it follows that $K_{\bar {\mathcal A}_g^{tor}}+\mathbf \Delta$ is not nef for $g\ge 3$.

\begin{rem}[Low dimension case] For $g\le 2$, we have in contrast that $K_{\bar {\mathcal A}_g^{Vor}}+\mathbf \Delta$ is the pull back of an ample $\mathbb Q$-line bundle on the coarse moduli space $\bar A_g^{Vor}$, and in particular is nef.   
 For $g=1$, we have $\bar {\mathcal A}_1^{Vor}\cong \overline {\mathcal M}_{1,1}$, and  $K_{\bar {\mathcal A}_1^{Vor}}+\mathbf \Delta= K_{\overline {\mathcal M}_{1,1}}+\boldsymbol \delta$ since $\mathbf N_0$ is empty and $\mathbf \Delta^{tor}=\boldsymbol \delta$; one can check directly (see, e.g., the introduction to \cite{CMZlog_general_type_KSBA}, or the discussion below) that $K_{\overline {\mathcal M}_{1,1}}+\boldsymbol \delta$ is the pull back of an ample $\mathbb Q$-line bundle on the coarse moduli space.
 For $g=2$, we have  $\bar {\mathcal A}_2^{Vor}\cong \overline {\mathcal M}_{2}$, and  $K_{\bar {\mathcal A}_2^{Vor}}+\mathbf \Delta= K_{\overline {\mathcal M}_{2}}+\boldsymbol \delta$ since $\mathbf N_0=\boldsymbol\theta_{null}=\boldsymbol \delta_1$ and $\mathbf \Delta^{tor}=\boldsymbol \delta_0$;  from \cite[Thm.~(1.3)]{cornalba_harris} we have  that $K_{\overline{\mathcal M}_2}+\boldsymbol\delta$ is the pull back of an ample $\mathbb Q$-line bundle on the coarse moduli space.  Alternatively, one can consider that $\operatorname{Pic}(\bar {\mathcal A}_2^{Vor})\otimes_{\mathbb Z}\mathbb Q= \operatorname{Pic}(\mathcal A_2')\otimes_{\mathbb Z}\mathbb Q$, and compute from the formulas above that 
$K_{\bar {\mathcal A}_2^{Vor}}+\mathbf \Delta= K_{\bar {\mathcal A}_2^{Vor}}+\mathbf \Delta^{tor}+\boldsymbol \theta_{null}= 3 \boldsymbol \lambda + 5\boldsymbol \lambda -\frac{1}{2}\mathbf D= 8\boldsymbol \lambda -\frac{1}{2}\mathbf D$.  Then, using \cite[Thm.~0.2]{HulekNefDiv2000}, one has that $K_{\bar {\mathcal A}_2^{Vor}}+\mathbf \Delta$ is in the interior of the nef cone, as its slope is greater than $12$, and so it is the pull back of an ample $\mathbb Q$-line bundle on the coarse moduli space.
\end{rem}

Regarding the corresponding results on the coarse moduli spaces, recall from  
\cite[p.429]{TaiKodaira82} that the ramification divisor for the coarse moduli morphism  $\bar{\mathcal A}_g^{tor}\to \bar A^{tor}_g$ is empty for $g\ge 3$.  The results above then say that for $g\ge 3$, 
$$K_{\bar A_g^{tor}}+\Delta=K_{\bar A_g^{tor}}+\Delta^{tor}+\theta_{null}+(N_0')_{\operatorname{red}}$$
 is big, but not nef.   

For $g=2$, under the isomorphism  $\bar {\mathcal A}_2^{Vor}\cong \overline {\mathcal M}_{2}$, we have 
 $\mathbf \Delta^{tor}=\boldsymbol \delta_0$ and $\mathbf N_0=\boldsymbol\theta_{null}=\boldsymbol \delta_1$, so that $\mathbf \Delta=\boldsymbol\delta$.  The ramification divisor for $\bar{\mathcal A}_2^{Vor}\to \bar A^{Vor}_2$ is $\boldsymbol\theta_{null}\cup\boldsymbol \beta$, where $\boldsymbol\beta$ is the closure of the locus of Jacobians of bi-elliptic curves; at the generic point of both divisors there is an extra $\mathbb Z_2$-automorphism group.  At the level of the Siegel upper half-space, in coordinates 
$
\left(
\begin{array}{cc}
\tau_1& \tau_2\\
\tau_2& \tau_3
\end{array}
\right)
$, these correspond to the divisors $\tau_2=0$, and $\tau_1-\tau_3=0$, respectively, giving rise to involutions in $\operatorname{Sp}(4,\mathbb Z)$ (see \cite{HWbielliptic89}).  
In other words, we have $R=\frac{1}{2}\theta_{null}+\frac{1}{2}\beta$, where 
 $\beta$ on $\bar A_2^{Vor}$ is the reduced divisor given as the closure of the locus of Jacobians of bi-elliptic curves, and $\Delta = \Delta^{tor}+\frac{1}{2}\theta_{null}$.  
The results above then say that 
$$K_{\bar A_2^{Vor}}+R+\Delta=K_{\bar A_2^{Vor}}+\frac{1}{2}\beta+\Delta^{tor}+\theta_{null}$$ is ample.

For $g=1$, we have $\bar {\mathcal A}_1^{Vor}\cong \overline {\mathcal M}_{1,1}$ and  $\mathbf \Delta^{tor}=\boldsymbol \delta$.
Letting $p_j$ denote the point of $\overline M_{1,1}$ parameterizing elliptic curves with $j$-invariant $j$, we have that  the branch divisor is  given by  $R=(1-\frac{1}{3})p_{0}+ (1-\frac{1}{2})p_{1728}$, and the boundary divisor is given by $ \Delta = p_\infty$.  The results above then say that
$
K_{\bar A_1^{Vor}}+R+\Delta=K_{\mathbb P^1} +\frac{2}{3}p_{0}+ \frac{1}{2}p_{1728}+p_\infty$
is ample.

\subsection{Moduli of cubic surfaces}

The KSBA moduli space $\overline{\mathcal M}_{\operatorname{cub}}$ of cubic surfaces \cite{GKS,HKT09}, with coarse moduli space $\overline M
_{\operatorname{cub}}$,   provides another interesting example, as it can be described in several different ways, giving different ways of applying  \Cref{T:main-goodMM}, as well as \cite[Thm.~A]{CMZlog_general_type_KSBA}.

As $\overline{\mathcal M}_{\operatorname{cub}}$ is given as a moduli space of pairs, first we explain how to apply  \cite[Thm.~A]{CMZlog_general_type_KSBA} to the moduli space. 
Recall that the interior $\mathcal M_{\operatorname{cub}}$ of 
 $\overline{\mathcal M}_{\operatorname{cub}}$ parameterizes pairs $(S,(\frac{1}{9}+\epsilon) D)$,  $0<\epsilon\ll 1$, where $S$ is a smooth cubic surface and $D\subseteq S$ is the union of the $27$ lines on the cubic surface.   The boundary of  $\overline{\mathcal M}_{\operatorname{cub}}$ has two irreducible components, a divisor $\mathbf \Delta_{A_1}$ whose generic point parameterizes cubics with a single singularity, which is of type $A_1$, and a divisor $\mathbf \Delta_{3A_2}$, whose generic point parameterizes three hyperplanes in $\mathbb P^3_{\mathbb C}$ each containing $9$ lines with weight $\frac{1}{9}+\epsilon$ (the terminology $\mathbf \Delta_{3A_2}$ comes from the connection with the related GIT moduli space).  
 We refer the reader to \cite{GKS} for a more precise description of the boundary divisors for the singular surfaces.   The divisorial locus in $\overline{\mathcal M}_{\operatorname{cub}}$ parameterizing surfaces with  extra automorphisms is the irreducible Eckardt divisor $\mathbf R$, whose generic point   corresponds to a smooth cubic surface with an Eckardt point.  Recall that an Eckardt point is a point on a cubic surface where $3$ lines intersect, and a general cubic surface with an Eckardt point has  a $\mathbb Z_2$ automorphism group.  
 We denote by  $\pi:\overline{\mathcal M}_{\operatorname{cub}}\to \overline{M}_{\operatorname{cub}}$ the canonical morphism to the coarse moduli space, and we let $\Delta_{A_1}$, $\Delta_{3A_2}$, and $R$ be the $\mathbb Q$-divisors on $\overline{M}_{\operatorname{cub}}$ so that $\pi^*\Delta_{A_1}=\mathbf \Delta_{A_1}$, $\pi^*\Delta_{3A_2}=\mathbf \Delta_{3A_2}$, and $\pi^*R=\mathbf R$.   We denote by $R'$ the integral divisor with the same support as $R$, which gives us that $R=\frac{1}{2}R'$.

As a smooth moduli stack of KSBA stable pairs, we can apply  \cite[Thm.~A]{CMZlog_general_type_KSBA} to $\overline{\mathcal M}_{\operatorname{cub}}$, and the above discussion implies that $K_{ \overline{\mathcal M}_{\operatorname{cub}}}+\mathbf R+\mathbf \Delta_{A_1}+\mathbf \Delta_{3A_2}$ is big; note that here $\mathbf R$ is considered part of the pair discriminant, since the three lines intersecting in the Eckardt point are not normal crossing on the surface.  This implies that
\begin{equation}\label{E:ModCub1}
K_{\overline{M}_{\operatorname{cub}}}+R'+\Delta_{A_1}+\Delta_{3A_2}
\end{equation}
is big; incidentally, this is the same result one would obtain by applying  \cite[Thm.~A]{WW23} to $\overline M_{\operatorname{cub}} - (R'\cup \Delta_{A_1}\cup \Delta_{3A_2})$.

An alternate approach to the moduli space of cubic surfaces can be given via the Allcock--Carlson--Toledo \cite{ACTsurf} construction, taking the triple cover of $\mathbb P^3_{\mathbb C}$ branched along the cubic surface, which yields a cubic threefold with a $\mu_3$-action.  Taking the intermediate Jacobian gives a period map $\mathcal M_{\operatorname{cub}}\to \mathcal A_5$, and using the Clemens--Griffiths Torelli theorem for cubic threefolds, one has that the pull back of the universal family of principally polarized abelian varieties has maximal variation.  After resolving the period map to the second Voronoi compactification,  an application of  \Cref{T:main-goodMM} 
to the resolution (note that even though we have a family of abelian varieties,  the base of the family is a stack that does not have nontrivial stabilizers at the generic point),  implies,
after also invoking \cite[Lem. 1.9]{CMZlog_general_type_KSBA}, 
 that $K_{ \overline{\mathcal M}_{\operatorname{cub}}}+\mathbf \Delta_{A_1}+\mathbf \Delta_{3A_2}$ is big.   This implies that
\begin{equation}\label{E:ModCub2}
K_{\overline{M}_{\operatorname{cub}}}+\frac{1}{2}R'+\Delta_{A_1}+\Delta_{3A_2}
\end{equation}
is big.

  In fact, following the Allcock--Carlson--Toledo construction further by considering the $\mu_3$-action on the intermediate Jacobian, one obtains a ball quotient model of the moduli space of cubic surfaces \cite{ACTsurf}, and a result of \cite{GKS} implies that $\overline{M}_{\operatorname{cub}}$ is isomorphic to the toroidal compactification of this ball quotient.  
  Interestingly, from the perspective of Hodge theory and arithmetic quotients, one has from Hirzebruch--Mumford proportionality that 
  $K_{ \overline{\mathcal M}_{\operatorname{cub}}}+\mathbf \Delta_{3A_2}$ is big.  This implies that 
  \begin{equation}\label{E:ModCub3}
  K_{\overline M_{\operatorname{cub}}}+\frac{1}{2}R'+\frac{5}{6}\Delta_{A_1}+\Delta_{3A_2}
  \end{equation}
   is big and nef (it is a multiple of the pull back of the Hodge bundle from the SBB compactification); the difference in the formula here from that in \eqref{E:ModCub2} can be interpreted as coming from the fact that from the geometric construction of \cite{ACTsurf}, the   Hodge structure associated with cubic surfaces with a unique $A_1$ singularity is pure, and has an order $6$ automorphism not coming from an automorphism of the cubic surface, so that $\Delta_{A_1}$ is part of the ramification locus, rather than the boundary, for the ball quotient.  

Finally, for context, we recall a few results from 
\cite{CMGH25} regarding the nef and effective cones on $\overline M_{\operatorname{cub}}$.  The starting point is that $\operatorname{Pic}_{\mathbb Q}(\overline M_{\operatorname{cub}})=\mathbb Q\langle \Delta_{A_1},\Delta_{3A_2}\rangle$; for convenience, given a $\mathbb Q$-line bundle $\alpha \Delta_{A_1}+\beta\Delta_{3A_2}$, we define the slope as
$$
\mu(\alpha \Delta_{A_1}+\beta\Delta_{3A_2}):=\frac{\beta}{\alpha}.
$$
Then \cite[Thm.~1.1]{CMGH25} implies that the nef cone is determined by the slope condition $2\le \mu \le 6$, and the effective cone is determined by the slope condition $0\le \mu\le \infty$.   
Moreover, $K_{\overline{M}_{\operatorname{cub}}}=\frac{1}{4}\left(-15 \Delta_{A_1}-26\Delta_{3A_2}\right)$ and $R'=\frac{1}{4}\left(25 \Delta_{A_1}+54\Delta_{3A_2}\right)$; see \cite[Prop.~2.1 and (2.4)]{CMGH25}.
Consequently, 
  $$
  K_{\overline M_{\operatorname{cub}}}+a R'+b\Delta_{A_1}+c\Delta_{3A_2}=
   \frac{1}{4}\left(-15 +25 a+4b\right)\Delta_{A_1} + 
   \frac{1}{4}\left(-26+54a+4c\right) 
\Delta_{3A_2}
  $$
so that 
$$
\mu( K_{\overline M_{\operatorname{cub}}}+a R'+b\Delta_{A_1}+c\Delta_{3A_2})=\frac{-26+54a+4c}{-15 +25 a+4b}.
$$
In particular, considering \eqref{E:ModCub3}, we have $\mu(K_{\overline M_{\operatorname{cub}}}+\frac{1}{2}R'+\frac{5}{6}\Delta_{A_1}+\Delta_{3A_2})=6$, which lies on the boundary of the nef cone, reflecting the fact that $K_{\overline M_{\operatorname{cub}}}+\frac{1}{2}R'+\frac{5}{6}\Delta_{A_1}+\Delta_{3A_2}$ is semi-ample, but not ample, being a multiple of the pull back of the Hodge bundle. Meanwhile, considering \eqref{E:ModCub2} and \eqref{E:ModCub1}, respectively, we have $\mu(K_{\overline{M}_{\operatorname{cub}}}+\frac{1}{2}R'+\Delta_{A_1}+\Delta_{3A_2})= 10/3$ and $\mu(K_{\overline{M}_{\operatorname{cub}}}+R'+\Delta_{A_1}+\Delta_{3A_2})=16/7$, so that both are ample.

\appendix

 \bibliographystyle{amsalpha}
 \bibliography{mhm_bib}

\end{document}